\newdimen\AAdi%
\newbox\AAbo%
\def\AAk#1#2{\s_etbox\AAbo=\hbox{#2}\AAdi=\wd\AAbo\kern#1\AAdi{}}%
\def\AAr#1#2#3{\s_etbox\AAbo=\hbox{#2}\AAdi=\ht\AAbo\raise#1\AAdi\hbox{#3}}%
\font\tenmsb=msbm10 at 12pt \font\sevenmsb=msbm7 at 8pt
\font\fivemsb=msbm5 at 6pt
\newtheorem{theorem}{Theorem}
\newtheorem{remark}[theorem]{Remark}
\newtheorem{corollary}[theorem]{Corollary}
\newtheorem{lemma}[theorem]{Lemma}
\newtheorem{proposition}[theorem]{Proposition}
\numberwithin{equation}{section} \numberwithin{theorem}{section}
\renewcommand{\topmargin}{0cm}
\renewcommand{\oddsidemargin}{5mm}
\renewcommand{\evensidemargin}{5mm}
\renewcommand{\textwidth}{150mm}
\renewcommand{\textheight}{230mm}
\def\R{\mathbb R}
\def\N{\mathbb N}
\def\S{\mathbb S}
\def\mR{\mathcal R}
\def\na{\nabla}
\def\bn{\overline\nabla}
\def\ir#1{\mathbb R^{#1}}
\def\f#1#2{\frac{#1}{#2}}
\def\a{\alpha}
\def\be{\beta}
\def\r{\Re_{I\!V}}
\def\p#1{\partial #1}
\def\de{\delta}
\def\De{\Delta}
\def\e{\eta}
\def\ep{\epsilon}
\def\G{\Gamma}
\def\g{\gamma}
\def\k{\kappa}
\def\la{\lambda}
\def\La{\Lambda}
\def\lan{\langle}
\def\ran{\rangle}
\def\Om{\Omega}
\def\th{\theta}
\def\si{\sigma}
\def\Si{\Sigma}
\def\r{\rho}
\def\z{\zeta}
\def\div{\mathrm{div}}
\begin{document}

\title[Existence and non-existence of area-minimizing hypersurfaces]
{Existence and non-existence of area-minimizing hypersurfaces in
  manifolds of non-negative Ricci curvature}

\author{Qi Ding}
\address{Max Planck Institute for Mathematics in the Sciences, Inselstr. 22, 04103 Leipzig, Germany}
\email{dingqi09@fudan.edu.cn}\email{dingqi@fudan.edu.cn}
\address{Current Address: Shanghai Center for Mathematical Sciences, Fudan University, Shanghai 200433, China}
\author{J. Jost}
\address{Max Planck Institute for Mathematics in the Sciences, Inselstr. 22, 04103 Leipzig, Germany}
\email{jost@mis.mpg.de}
\author{Y.L. Xin}
\address{Institute of Mathematics, Fudan University, Shanghai 200433, China}
\email{ylxin@fudan.edu.cn}

\thanks{The first author is supported partially by Natural Science Foundation of Shanghai (Grant No. 15ZR1402200). The second author is supported by the ERC Advanced Grant
  FP7-267087. The third author is supported partially by NSFC. He is also grateful to the Max Planck
Institute for Mathematics in the Sciences in Leipzig for its
hospitality and continuous support.}

\begin{abstract}
We study minimal hypersurfaces in  manifolds of non-negative Ricci
curvature, Euclidean volume growth and quadratic curvature decay at
infinity. By comparison with capped spherical cones, we identify a
precise borderline for the Ricci curvature decay. Above this value, no
complete area-minimizing hypersurfaces exist. Below this value, in
contrast, we construct  examples.
\end{abstract}

\maketitle

\section{Introduction}
 Bernstein's theorem  says that  an entire
minimal  graph in $\ir{3}$ has to be a plane. This is a classical theorem, and several proofs have been found for it. The original proofs were
strictly two-dimensional, making essential use of conformal
coordinates, but the statement itself is certainly meaningful in any
dimension. Therefore, it was asked whether it also holds in higher dimensions.
 By using and developing tools from geometric measure theory, higher
dimensional generalizations  of the Bernstein theorem were achieved by successive efforts   of
W. Fleming \cite{F}, E. De Giorgi \cite{DG}, F. J. Almgren \cite{Al}
and J. Simons \cite{Si}  up to dimension seven within the framework of
geometric measure theory. In 1969, Bombieri-De Giorgi-Giusti
\cite{BDG} then provided a counterexample by constructing a nontrivial entire minimal graph in
$\ir{n+1}$ with $n>7$ whose tangent cone at infinity had been
described earlier by Simons.

Clearly, the Bernstein problem can be further generalized. We can not
only increase the dimension of the ambient space, but also allow for
more general Riemannian geometries than the Euclidean one. In order to
see what might happen then, we observe that minimal graphs in
Euclidean space are
automatically area minimizing. Thus, the Bernstein problem is
essentially about the (non-)existence of a particular class of complete area-minimizing
hypersurfaces. Therefore, the challenge of the Bernstein problem
consists in finding sharp conditions for the existence or
non-existence of complete  area-minimizing hypersurfaces in curved
ambient manifolds.

Let us therefore review the previous results in this direction. Schoen-Simon-Yau \cite{SSY} obtained
$L^p-$estimates for the squared norm of the second fundamental form for stable minimal hypersurfaces in certain curved ambient manifolds. As a consequence, they showed that any stable minimal hypersurface with Euclidean volume growth in a flat $N^{n+1}$ with $n\le 5$ has to be totally geodesic. Later,
Fischer-Colbrie and Schoen \cite{FS} proved that there are no stable minimal surfaces in 3-dimensional manifolds with positive Ricci curvature. Shen-Zhu \cite{SZ} proved certain rigidity results for stable minimal hypersurfaces in $N^4$ or $N^5$. On the other hand, P. Nabonnand \cite{N} constructed  a complete manifold $N^{n+1}$ with positive Ricci curvature which admits area-minimizing hypersurfaces.
M. Anderson \cite{An} proved a non-existence result for
area-minimizing hypersurfaces in  complete non-compact simply
connected manifolds $N^{n+1}$ of non-negative sectional curvature with
diameter growth conditions. For rotationally symmetric spaces with
conical singularities, some explicit results were obtained by F. Morgan
in \cite{Mf}. These results will provide us with important model
spaces for the general theory.

In the present paper we will study minimal hypersurfaces in
 complete Riemannian manifolds that  satisfy three  conditions:

\noindent C1) non-negative Ricci curvature;\\
\noindent C2) Euclidean volume growth;\\
\noindent C3) quadratic decay of the curvature tensor.

Such manifolds can be much more complicated than Euclidean space, but
on the other hand, this class of manifolds possesses certain
topological and analytical properties \cite{P},\cite{CM1} that
constrain their geometry. They
admit tangent cones at infinity over a smooth compact manifold in the
Gromov-Hausdorff sense. These cones may be not unique, but they
have certain nice properties, proved by Cheeger-Colding \cite{ChC}. Another important
fact is that their Green functions have a well controlled asymptotic
behavior.  In particular,  the Hessian of such a Green function
converges to the  metric tensor (up to a constant factor 2)
point-wisely at infinity, as shown by Colding-Minicozzi \cite{CM1}.
The precise results will be described in section 4.


While our non-existence results are quite general, the existence
results that we develop here, mainly for the purpose of showing that
our non-existence results are sharp, are more explicit and depend on
special constructions. Essentially, for these constructions, we
consider ambient manifolds of the form  $\Si\times\R$   where $\Si$ is
an $n$-dimensional Riemannian manifold with a conformally flat
 metric whose conformal factor depends only on the radius. This class
 will include  a capped spherical cone with opening angle $2\pi \kappa
$,
 denoted by $MCS_\k$. Its  tangent  cone at infinity is the uncapped
 spherical cone $CS_\k$, or equivalently, the Euclidean cone over a
 sphere of radius $\kappa$. These cones  will be on one hand our main
 examples for existence results and on the other hand our model spaces
 for the non-existence results. The border between those two
 phenomena, existence vs. non-existence, will be sharp. Existence takes
 place for $\k \ge \f{2}{n}\sqrt{n-1}$, non-existence else. The
 intuitive geometric reason is simply that for larger values of $\k$,
 in order to minimize area, it is most efficient to go through the
 vertex of the cone, whereas for smaller values of $\k$, it is better
 to avoid the vertex and go around the cone. This had already been
 observed by F. Morgan
in \cite{Mf}. As a by-product we can answer some questions raised by
M. Anderson in \cite{An}.

Whereas the existence examples are specific, our non-existence results
will be general. Essentially, the idea consists in reducing them to
the model cases by taking cones at infinity. For this, we need some
heavier machinery, including the theory of Gromov-Hausdorff limits
\cite{GLP,JK,Ps,GW} and
the theory of currents
in metric spaces developed by  Ambrosio-Kirchheim \cite{AK}. In order
to apply those tools, we shall analyze the Green function at infinity
of the ambient space
and minimal hypersurfaces with Euclidean volume growth, in order to carry the  stability inequality for minimal
hypersurfaces over to the asymptotic limit. The
corresponding results may be of interest in themselves, see Theorem \ref{Volest}.

Our main results thus are general non-existence results for stable minimal hypersurfaces in $(n+1)-$manifolds $N$ with conditions C1), C2) and C3)
under an additional growth condition on the non-radial Ricci curvature
involving a constant $\k'$. For the capped spherical cones $MCS_\k$,
this constant $\k'$ can be
expressed in terms of the constant $\k$.  More precisely, we show
that $N$ admits no complete stable minimal hypersurface with at most Euclidean
volume growth  if the above constant $\k'>\f{(n-2)^2}{4}$, see
Theorem \ref{N-M-E}. The existence result of Theorem \ref{Existence}
then tells us that  our condition on the asymptotic non-radial Ricci
curvature is optimal.

\emph{Acknowledgments.}
The authors would like to thank referees for insightful comments which improved the paper.

\section{Preliminaries}

Let $\Si$ be an $n$-dimensional Riemannian manifold with  metric  $ds^2=\si_{ij}dx_idx_j$ in  local coordinates.
Let $D$ be the corresponding Levi-Civita connection on $\Si$.
For a subset $\Om\subset\Si$ let $M$ be a graph in the product manifold $\Om\times\R$ with smooth defining function $u$ on $\Si,$ i.e.,
\begin{equation}\aligned\label{MOm}
M=\{(x,u(x))\in \Om\times\R|\ x\in\Om\}.
\endaligned
\end{equation}
Since $N=\Si\times\R$ has the product metric $ds^2=\si_{ij}dx_idx_j+dt^2$, then the induced metric  on $M$ is
$$ds^2=g_{ij}dx_idx_j=(\si_{ij}+u_iu_j)dx_idx_j,$$
where $u_i=\f{\p u}{\p x_j}$ and $u_{ij}=\f{\p^2u}{\p x_i\p x_j}$ in the sequel. Let $(\si^{ij})$ be the inverse  metric tensor on $\Si$.
Let $E_i$ and $E_{n+1}$ be the dual vectors of $dx_i$ and $dt$, respectively. Let $\G^k_{ij}$ be the Christoffel symbols of $\Si$ with respect to the frame $E_i$, i.e., $D_{E_i}E_j=\sum_k\G^k_{ij}E_k.$
Set $u^i=\si^{ij}u_j,\; |Du|^2=\si^{ij}u_iu_j,\;
D_iD_ju=u_{ij}-\G^k_{ij}u_k$ and $v=\sqrt{1+|Du|^2}$.  If $f$ stands
for the immersion \eqref{MOm} of $\Si$ in $M\subset N$, then $X_i=f_*E_i=E_i+u_iE_{n+1}, \; i=1, \cdots, n,$ are
tangent vectors of $M$ in $N$. Let $\nu_M$ and $H$ be the unit normal vector field and the mean curvature of $M$ in $N$. Then, direct computation yields $$\nu_M=\f{1}{v}(-\si^{ij}u_jE_i+E_{n+1}),$$
$$H=\div_\Si\left(\f{Du}{v}\right)=\f1{\sqrt{\det \si_{kl}}}\p_j\left(\sqrt{\det \si_{kl}}\f{\si^{ij}u_i}{v}\right).$$

$M$ is a minimal graph in $\Om\times\R$ if and only if  $H\equiv 0$ and $u$ satisfies
\begin{equation}\aligned\label{u}
\div_\Si\left(\f{Du}{\sqrt{1+|Du|^2}}\right)=\f1{\sqrt{\det \si_{kl}}}\p_j\left(\sqrt{\det \si_{kl}}\f{\si^{ij}u_i}{\sqrt{1+|Du|^2}}\right)=0.
\endaligned
\end{equation}
This is the Euler-Lagrangian equation of the volume functional of $M$ in $N$. Moreover, similar to the Euclidean case \cite{X}, any minimal graph on $\Om$ is also an area-minimizing hypersurface in $\Om\times\R$, see Lemma \ref{comp} below.

We introduce an operator $\mathfrak{L}$ on a domain $\Om\subset\Si$ by
\begin{equation}\aligned\label{WL}
\mathfrak{L}F=\left(1+|DF|^2\right)^{\f32}\div_{\Si}\left(\f{D F}{\sqrt{1+|D F|^2}}\right)=\left(1+|D F|^2\right)\De_{\Si}F-F_{i,j}F^iF^j,
\endaligned
\end{equation}
where $F^i=\si^{ik}F_k$, and $F_{i,j}=F_{ij}-\G_{ij}^kF_k$ is the  covariant derivative. Clearly, $\{(x,F(x))|\ x\in\Om\}$ is a minimal graph on $\Si$ if and only if $\mathfrak{L}F=0$ on $\Om$. We call  $F$  $\mathfrak{L}$-\emph{subharmonic} ($\mathfrak{L}$-\emph{superharmonic}) if $\mathfrak{L}F\ge0$ ($\mathfrak{L}F\le0$).

\begin{lemma}\label{comp}
Let $\Om$ be a bounded domain in $\Si$ and $M$ be a minimal graph on
$\overline{\Om}$ as in \eqref{MOm} with volume element $d\mu_M$. For any hypersurface $W\subset\overline{\Om}\times\R$ with $\p M=\p W$, one has
\begin{equation}\aligned
\int_{M}d\mu_M\le\int_{W}d\mu_{W},
\endaligned
\end{equation}
with equality if and only if $W=M$.
\end{lemma}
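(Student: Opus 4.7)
The plan is a calibration argument adapted to the product manifold $N=\Sigma\times\mathbb{R}$. Since the expression for $\nu_M$ given in the excerpt depends only on $x$, I would first extend it to a vector field on the entire cylinder $\Omega\times\mathbb{R}$ by declaring it $t$-independent:
\[
V(x,t)=\frac{1}{v(x)}\bigl(-\sigma^{ij}u_j(x)E_i+E_{n+1}\bigr).
\]
The same computation that shows $|\nu_M|=1$ gives $|V|\equiv 1$ pointwise, and a direct divergence calculation in the product metric yields $\mathrm{div}_N V=\mathrm{div}_\Sigma(-Du/v)+\partial_t(1/v)=-H$, which vanishes by the minimality of $M$.

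The key observation is that the contracted $n$-form $\omega:=\iota_V\,d\mathrm{vol}_N$ is closed, since $d\omega=(\mathrm{div}_N V)\,d\mathrm{vol}_N=0$, so it will serve as a calibration. For any oriented hypersurface $W$ with unit normal $\nu_W$, the splitting $V=\langle V,\nu_W\rangle\nu_W+V^T$ (with $V^T$ tangent to $W$) together with the definition of $\iota_V$ gives the pointwise identity $\omega|_W=\langle V,\nu_W\rangle\,d\mu_W$; hence by Cauchy-Schwarz and $|V|=1$,
\[
\omega|_W\le d\mu_W,
\]
with equality if and only if $\nu_W=V$. In particular, since $V=\nu_M$ along $M$, one has the exact identity $\omega|_M=d\mu_M$.

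Next I would invoke Stokes' theorem: because $\partial M=\partial W$ and $H_n(\overline\Omega\times\mathbb{R})=0$ (the cylinder deformation retracts onto $\overline\Omega$, which is a compact manifold with non-empty boundary), the associated integer rectifiable currents differ by a boundary, $[W]-[M]=\partial[U]$ for some $(n+1)$-current $[U]$ inside $\overline\Omega\times\mathbb{R}$. Then $\int_W\omega-\int_M\omega=\int_U d\omega=0$, and combining this with the two previous identities yields
\[
\int_M d\mu_M=\int_M\omega=\int_W\omega\le\int_W d\mu_W,
\]
which is the desired inequality. If equality holds, then $\nu_W=V$ pointwise on $W$; since $V$ has strictly positive $E_{n+1}$-component, $W$ must be the graph of a function $w$ over $\Omega$, and $\nu_W=V$ forces $Dw=Du$, so matching boundary values with $u$ gives $w=u$, i.e.\ $W=M$.

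The only delicate step is making Stokes rigorous when the competitor $W$ is merely rectifiable, together with the claim that $[W]-[M]$ bounds inside the cylinder. This is a standard application of the currents framework (the Ambrosio-Kirchheim machinery invoked later in the paper handles this cleanly), but it is routine here because $\omega$ is a smooth closed calibration with comass one; alternatively, if $W$ is assumed $C^1$, one can apply the classical divergence theorem directly to the region enclosed between $M$ and $W$ inside a sufficiently large slab $\overline\Omega\times[-T,T]$, with the lateral and top/bottom contributions cancelling because $V$ is $t$-independent and tangential to the vertical side.
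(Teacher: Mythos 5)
Your proof is correct and is essentially the same as the paper's: your calibration form $\omega=\iota_V\,d\mathrm{vol}_N$ is precisely dual to the divergence-free vector field $Y$ that the paper integrates over the region $U$ enclosed by $M$ and $W$, and your Stokes step is exactly the paper's Green's-formula step. The only difference is cosmetic --- you phrase the argument in calibration language and add remarks on the current-theoretic justification for low-regularity competitors, whereas the paper applies the classical divergence theorem directly to the enclosed domain.
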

\begin{proof}
Let $U$ be the domain in $N$ enclosed by $M$ and $W$. Recall that $\nu_M$ is a unit normal vector field on $M$.
Viewing $u_i$ and $v$ as functions on $\Si$, we define a vector field $Y$ such that for every $(x,t)\in U$, $Y$ is just $\nu_M$ at $M\cap(\{x\}\times\R)$ up to a translation along the $E_{n+1}$ axis. Namely,
$$Y(x,t)=-\sum_{i=1}^n\f{\si^{ij}(x)u_j(x)}{v(x)}E_i(x)+\f{1}{v(x)}E_{n+1}.$$
From the minimal surface equation (\ref{u}) we have
\begin{equation*}\aligned
\overline{\div}(Y)=-\sum_i\f1{\sqrt{\det \si_{kl}}}\p_{x_i}\left(\f{\sqrt{\det \si_{kl}}\si^{ij}u_j}{v}\right)=0,
\endaligned
\end{equation*}
where $\overline{\div}$ stands for the divergence operator on $N$.
Let $\nu_M,\nu_W$ be the unit outside normal vectors of $M,W$ respectively. Observe that $Y|_M=\nu_M$. Then by Green's formula,

\begin{equation*}\aligned\label{Gaussf}
0=&\int_{U}\overline{\div}(Y)=\int_{M}\lan Y,\nu_M\ran d\mu_M-\int_{W}\lan Y,\nu_W\ran d\mu_{W}\\
\ge&\int_{M} d\mu_M-\int_{W} d\mu_{W}.
\endaligned
\end{equation*}
Obviously,  equality holds if and only if $M=W$.
\end{proof}

The index form from the second variational formula for the volume
functional for a two-sided minimal hypersurface $M$ in $N$ is (see Chapter 6 of \cite{X})
\begin{equation}\label{SV}
I(\phi,\phi)=\int_M\left(|\na\phi|^2-|\bar{A}|^2\phi^2-Ric_N(\nu_M,\nu_M)\phi^2\right)d\mu_M,
\end{equation}
for any $\phi\in C_c^2(N)$, where $\na$ and $\bar{A}$ are the Levi-Civita connection and the second fundamental form of $M$, respectively.

Let $S_{\k}$ be an $n-$sphere in $\R^{n+1}$ with radius $0<\k\le1$, namely,
$$S_{\k}=\{(x_1,\cdots,x_{n+1})\in\R^{n+1}|\ x_1^2+\cdots+x_{n+1}^2=\k^2\}.$$
If $\{\th_i\}_{i=1}^{n}$ is an orthonormal basis of $S_{\k}$,
then the sectional curvature of $S_{\k}$ is
$$K_S(\th_i,\th_j)=\f{1}{\k^2}\qquad \mathrm{for}\ i\neq j.$$
Let $CS_\k=\R^+\times_\r S_\k$ be the cone over $S_{\k}$ with vertex $o$, which has the metric
$$\si_C=d\r^2+\k^2\r^2d\th^2,$$
where $d\th^2$ is the standard metric on $\S^{n}(1)$.

Let $\{e_\a\}_{\a=1}^{n}\bigcup\{\f{\p}{\p\r}\}$ be an orthonormal basis at the considered point of $CS_\k$ away from the vertex, then the sectional curvature and Ricci curvature of $CS_\k$ are
\begin{equation}\aligned\label{SRCS}
K_{CS_\k}\left(\f{\p}{\p\r},e_\a\right)=0,\quad &K_{CS_\k}(e_\a,e_\be)=\f{1}{\r^2}\left(\f1{\k^2}-1\right),\\
Ric_{CS_\k}\left(\f{\p}{\p\r},\f{\p}{\p\r}\right)=Ric_{CS_\k}\left(\f{\p}{\p\r},e_\a\right)=0,\quad &Ric_{CS_\k}(e_\a,e_\be)=\f{n-1}{\r^2}\left(\f1{\k^2}-1\right)\de_{\a\be}.
\endaligned
\end{equation}

Set $\r=r^\k$, then $\si_C$ can be rewritten as a conformally flat metric
\begin{equation}\aligned\label{siC}
\si_C=\k^2r^{2\k-2}dr^2+\k^2r^{2\k}d\th^2=\k^2r^{2\k-2}\sum_{i=1}^{n+1}dx_i^2=e^{2\log\k-2(1-\k)\log r}\sum_{i=1}^{n+1}dx_i^2,
\endaligned
\end{equation}
where $r^2=\sum_ix_i^2$.

Let $Y$ be an $(n-1)-$dimensional minimal hypersurface in $S_{\k}$ with the second fundamental form $A$ and $CY$ be the cone over $Y$ in $CS_{\k}$ with vertex $o$. For any $0<\ep<1$ denote
$$CY_\ep=\{tx\in S_\k\times\R|\ x\in Y,\ t\in[\ep,1]\}.$$
Clearly, $Y$ is a minimal hypersurface in $S_{\k}$ if and only if $CY_\ep$ is minimal in $CS_{\k}$. Moreover, let $\bar{A}$ be the second fundamental form of $CY_\ep$ in $CS_{\k}$, then
$$|\bar{A}|^2=\f1{\r^2}|A|^2.$$
At any considered point, we can suppose that $\th_{n}$ is the unit
normal vector of $Y\subset S_\k$ and $\{\th_i\}_{i=1}^{n-1}$ is the
orthonormal basis of $TY$. Let $\nu=\f1\r\th_{n}$ be the unit normal
vector of $CY_{\ep}$. Let $d\mu$ and $d\mu_Y$ be the volume element of
$CY_\ep$ and $Y$, respectively. 

Now, from (\ref{SV}), the index form of $CY_{\ep}$ in $CS_{\k}$ becomes
\begin{equation}\aligned
I(\phi,\phi)=\int_{CY_\ep}\left(-\phi\De_{CY}\phi-|\bar{A}|^2\phi^2-Ric_{CS_{\k}\times\R}(\nu,\nu)\phi^2\right)d\mu
\endaligned
\end{equation}
for any $\phi\in C_c^2(CY\setminus\{o\})$. Note $Ric_{S_\k}(\th_i,\th_j)=\f{n-1}{\k^2}\de_{ij}$ and
$$Ric_{CS_k}(\nu,\nu)=\f1{\r^2}Ric_{S_k}(\th_{n},\th_{n})-\f{n-1}{\r^2}=\f{n-1}{\r^2}\left(\f1{\k^2}-1\right).$$

When $\phi$ is written as $\phi(x,\r)\in C^2(Y\times\R)$,  a simple calculation implies
\begin{equation}\aligned
\De_{CY} \phi=\f1{\r^2}\De_{Y}\phi+\f{n-1}{\r}\f{\p \phi}{\p\r}+\f{\p^2\phi}{\p \r^2},
\endaligned
\end{equation}
then
\begin{equation}\aligned\label{Indexphi}
I(\phi,\phi)=\int_\ep^1\bigg(\int_Y\bigg(&-\De_Y\phi-|A|^2\phi-\f{n-1}{\k^2}\phi+(n-1)\phi\\
&-(n-1)\r\f{\p\phi}{\p\r}-\r^2\f{\p^2\phi}{\p \r^2}\bigg)\phi\ d\mu_Y\bigg)\r^{n-3}d\r.
\endaligned
\end{equation}

When $\k=1$ and $Y$ is the Clifford minimal hypersurface in the unit $7-$sphere
$$Y=S^3\left(\f{\sqrt{2}}{2}\right)\times S^3\left(\f{\sqrt{2}}{2}\right),$$
then, $CY$ is  Simons' cone, proved to be stable in \cite{Si} (see also Chapter 6 of \cite{X}).

\section{Constructions of area-minimizing hypersurfaces}

Let $\Si$ be the Euclidean space $\R^{n+1}$ with a conformally flat  metric
$$ds^2=e^{\phi(r)}\sum_{i=1}^{n+1}dx_i^2,$$
where $r=|x|=\sqrt{x_1^2+\cdots+dx_{n+1}^2}$ and $\phi(|x|)$ is smooth in $\R^{n+1}$. Let $F$  be a function on $\R^{n+1}$. Let $E_i=\{\f{\p}{\p x_i}\}$ be a standard basis of $\R^{n+1}$ and $F_i=\f{\p}{\p x_i}F$ be the ordinary derivative in $\R^{n+1}$. Moreover,
$$\G_{ij}^k=\f{\phi'}2\left(\de_{ik}\f{x_j}r+\de_{jk}\f{x_i}r-\de_{ij}\f{x_k}r\right).$$

Denote $|\p F|^2=\sum_iF_i^2$.
Let $\De$ be the standard Laplacian of $\R^{n+1}$, then
\begin{equation}\aligned
\De_\Si F=&\si^{ij}F_{i,j}=e^{-\phi}\de_{ij}\left(F_{ij}-\f{\phi'}2\left(\de_{ik}\f{x_j}r+\de_{jk}\f{x_i}r-\de_{ij}\f{x_k}r\right)F_k\right)\\
=&e^{-\phi}\left(\De F+\f{n-1}2\phi'F_i\f{x_i}r\right).
\endaligned
\end{equation}
By (\ref{WL}) we can compute $\mathfrak{L}F$  in the conformal flat metric as follows.
\begin{equation}\aligned\label{LFx}
\mathfrak{L}F=&e^{-\phi}\left(1+e^{-\phi}|\p F|^2\right)\left(\De F+\f{n-1}2\phi'F_i\f{x_i}r\right)-e^{-2\phi}\left(F_{ij}F_iF_j-\f{|\p F|^2}2\phi'F_i\f{x_i}r\right)\\
=&e^{-\phi}\Big(\left(1+e^{-\phi}|\p F|^2\right)\De F-e^{-\phi}F_{ij}F_iF_j\Big)+e^{-\phi}\left(\f{n-1}2+\f{n}2e^{-\phi}|\p F|^2\right)\phi'F_i\f{x_i}r\\
=&e^{-2\phi}\left(|\p F|^2\left(\De F+\f {n}2\phi'F_i\f{x_i}r\right)-F_{ij}F_iF_j\right)+e^{-\phi}\left(\De F+\f{n-1}2\phi'F_i\f{x_i}r\right).
\endaligned
\end{equation}

\begin{lemma} Let $F=F(\th,r)$ be a function with
\begin{equation}\aligned
\th=\f{x_{n+1}}{\sqrt{x_1^2+\cdots+x_{n+1}^2}},\qquad r=\sqrt{x_1^2+\cdots+x_{n+1}^2},
\endaligned
\end{equation}
on $[-1,1]\times(0,\infty)$. Then we have
\begin{equation}\aligned\label{LFphi}
\mathfrak{L}F=&e^{-2\phi}\bigg(n\left(\left(1-\th^2\right)\f{F_\th^2}{r^2}+F_r^2\right)\left(\f{F_r}r+\f{\phi'}2F_r-\f{\th F_\th}{r^2}\right)\\
&+(1-\th^2)\f{F_\th^2}{r^2}\left(\f{\th F_\th}{r^2}+\f{F_r}r\right)+\f{1-\th^2}{r^2}\left(F_\th^2F_{rr}+F_r^2F_{\th\th}-2F_\th F_rF_{r\th}\right)\bigg)\\
&+e^{-\phi}\left(F_{rr}+\f{1-\th^2}{r^2}F_{\th\th}+\f {n}rF_r-\f{n\th}{r^2}F_\th+\f {n-1}2\phi'F_r\right).
\endaligned
\end{equation}
\end{lemma}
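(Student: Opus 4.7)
The plan is to carry out the chain-rule computation for $F=F(\theta,r)$ with $\theta=x_{n+1}/r$ and $r=|x|$, and substitute the resulting expressions into formula \eqref{LFx}. The only quantities entering \eqref{LFx} are $\Delta F$, $|\partial F|^2$, $F_iF_{ij}F_j$ and the radial derivative $F_ix_i/r$, so the task reduces to expressing each of these in terms of $F_r$, $F_\theta$, $F_{rr}$, $F_{\theta\theta}$ and $F_{r\theta}$.

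First I would set up the elementary Jacobian data $r_i:=\partial r/\partial x_i=x_i/r$ and $\theta_i:=\partial\theta/\partial x_i$, so that $F_i=F_\theta\theta_i+F_r r_i$, and record the four key contractions
\begin{equation*}
\sum_i r_i^2=1,\qquad \sum_i\theta_ir_i=0,\qquad \sum_i\theta_i^2=\frac{1-\theta^2}{r^2},\qquad \sum_i F_i\frac{x_i}{r}=F_r.
\end{equation*}
The last identity gives $F_ix_i/r=F_r$, which immediately identifies the $e^{-\phi}$ line of \eqref{LFphi} once the Laplacian is known. The Laplacian itself is computed from the standard decomposition $\Delta=\partial_r^2+(n/r)\partial_r+r^{-2}\Delta_{S^n}$ on $\R^{n+1}$, combined with the zonal formula $\Delta_{S^n}f=(1-\theta^2)f_{\theta\theta}-n\theta f_\theta$ (derived by writing the spherical Laplacian in the coordinate $\theta=\cos\alpha$); this reproduces exactly the last line of the claimed identity.

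The main computational step is $F_{ij}F_iF_j$. I would expand
\begin{equation*}
F_{ij}=F_{\theta\theta}\theta_i\theta_j+F_{r\theta}(\theta_ir_j+r_i\theta_j)+F_{rr}r_ir_j+F_\theta\,\theta_{ij}+F_r\,r_{ij},
\end{equation*}
contract against $F_iF_j=F_\theta^2\theta_i\theta_j+2F_\theta F_r\theta_ir_j+F_r^2r_ir_j$, and use the previous contractions together with two additional identities:
\begin{equation*}
\sum_i\theta_i\theta_{ij}=\tfrac12\partial_j\!\Big(\tfrac{1-\theta^2}{r^2}\Big)=-\frac{\theta\,\theta_j}{r^2}-\frac{(1-\theta^2)r_j}{r^3},\qquad \sum_i\theta_{ij}r_i=-\frac{\theta_j}{r},
\end{equation*}
(the second obtained by differentiating $\sum_i\theta_ir_i=0$), and $r_{ij}=(\delta_{ij}-r_ir_j)/r$. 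This yields
\begin{equation*}
F_{ij}F_iF_j=F_{\theta\theta}\frac{(1-\theta^2)^2F_\theta^2}{r^4}+2F_{r\theta}\frac{(1-\theta^2)F_\theta F_r}{r^2}+F_{rr}F_r^2-\frac{\theta(1-\theta^2)F_\theta^3}{r^4}-\frac{(1-\theta^2)F_\theta^2F_r}{r^3}.
\end{equation*}
Together with $|\partial F|^2=F_r^2+(1-\theta^2)F_\theta^2/r^2$, this is everything needed.

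Finally, I would substitute into \eqref{LFx}. The pure second-derivative contributions from $|\partial F|^2\Delta F-F_{ij}F_iF_j$ collapse, using $|\partial F|^2-F_r^2=(1-\theta^2)F_\theta^2/r^2$ and $|\partial F|^2-(1-\theta^2)F_\theta^2/r^2=F_r^2$, to the symmetric bracket $\frac{1-\theta^2}{r^2}(F_\theta^2F_{rr}+F_r^2F_{\theta\theta}-2F_\theta F_rF_{r\theta})$ appearing in \eqref{LFphi}. The remaining first-order pieces from $|\partial F|^2\bigl(\tfrac{n}{r}F_r+\tfrac{n}{2}\phi'F_r-\tfrac{n\theta}{r^2}F_\theta\bigr)$ give the $nP[F_r/r+(\phi'/2)F_r-\theta F_\theta/r^2]$ line, and the leftover cross terms $\theta(1-\theta^2)F_\theta^3/r^4+(1-\theta^2)F_\theta^2F_r/r^3$ factor as the second line of \eqref{LFphi}. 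The $e^{-\phi}$ piece is already handled by $\Delta F$ and $F_ix_i/r=F_r$. The only real obstacle is bookkeeping in the $F_{ij}F_iF_j$ calculation; once the two identities for $\sum_i\theta_i\theta_{ij}$ and $\sum_i\theta_{ij}r_i$ are in hand, the rest is mechanical collection of terms.
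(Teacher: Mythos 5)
Your proposal is correct and follows essentially the same route as the paper: direct chain-rule computation of $|\partial F|^2$, $\Delta F$, $F_ix_i/r$ and $F_{ij}F_iF_j$ in the $(\theta,r)$ variables, followed by substitution into the already-established formula \eqref{LFx}. The one small difference is bookkeeping: where you expand $F_{ij}$ fully using $\theta_{ij}$, $r_{ij}$ and the contraction identities, the paper shortcuts via $F_{ij}F_iF_j=\tfrac12 F_i\,\partial_i|\partial F|^2$ so that only $\partial_\theta$ and $\partial_r$ of the already-computed $|\partial F|^2$ are needed; both yield the same expression (and, incidentally, your $F_\theta^2F_{\theta\theta}$ coefficient is the correct one, the paper's displayed intermediate line has a harmless typo $F_\theta F_{\theta\theta}$).
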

\begin{proof}
For $1\le \a\le n$ we have
\begin{equation}\aligned
F_\a=\p_{x_\a}F=&F_\th\cdot\left(-\f{x_\a x_{n+1}}{r^3}\right)+F_r\f{x_\a}r,\\
F_{n+1}=\p_{x_{n+1}}F=&F_\th\cdot\left(\f1r-\f{x^2_{n+1}}{r^3}\right)+F_r\f{x_{n+1}}r=F_\th\f{\sum_\a x^2_\a}{r^3}+F_r\f{x_{n+1}}r.
\endaligned
\end{equation}
Hence
\begin{equation}\aligned
|\p F|^2=\sum_\a F_\a^2+F^2_{n+1}=F_\th^2\f{\sum_\a x^2_\a}{r^4}+F_r^2=\left(1-\th^2\right)\f{F_\th^2}{r^2}+F_r^2,
\endaligned
\end{equation}
and
\begin{equation}\aligned
\sum_{i=1}^{n+1}x_iF_i=\sum_\a x_\a F_\a+x_{n+1}F_{n+1}=rF_r.
\endaligned
\end{equation}

In polar coordinates,
$$\sum_{i=1}^{n+1}dx_i^2=dr^2+r^2\left(d\be^2+\cos^2\be\ dS^{n-1}\right),$$
where $\sin\be=\th\in[-1,1]$ and $dS^{n-1}$ is the standard metric in the unit sphere $\S^{n-1}\in\R^n$. Hence
$$\sum_{i=1}^{n+1}dx_i^2=dr^2+\f{r^2}{1-\th^2}d\th^2+r^2(1-\th^2)dS^{n-1},$$
and
\begin{equation}\aligned
\De F=&\f1{r^n(1-\th^2)^{\f n2-1}}\left(\p_r\left(r^n(1-\th^2)^{\f n2-1}F_r\right)+\p_\th\left(r^n(1-\th^2)^{\f n2-1}\f{1-\th^2}{r^2}F_\th\right)\right)\\
=&F_{rr}+\f nrF_r+\f{1-\th^2}{r^2}F_{\th\th}-\f{n\th}{r^2}F_\th.
\endaligned
\end{equation}
Moreover,
\begin{equation}\aligned
&\sum_{1\le i,j\le n+1}F_{ij}F_iF_j=\f12\sum_iF_i\p_i|\p F|^2\\
=&\f12\sum_\a\left(-\f{x_\a x_{n+1}}{r^3}F_\th+\f{x_\a}r F_r\right)\left(-\f{x_\a x_{n+1}}{r^3}\p_\th|\p F|^2+\f{x_\a}r \p_r|\p F|^2\right)\\
&+\f12\left(\f{\sum_\a x^2_\a}{r^3}F_\th+\f{x_{n+1}}r F_r\right)\left(\f{\sum_\a x^2_\a}{r^3}\p_\th|\p F|^2+\f{x_{n+1}}r \p_r|\p F|^2\right)\\
=&\f12\f{\sum_\a x_\a^2}{r^4}F_\th\p_\th |\p F|^2+\f12F_r\p_r|\p F|^2\\
=&\f{1-\th^2}{2r^2}F_\th\p_\th\left(\left(1-\th^2\right)\f{F_\th^2}{r^2}+F_r^2\right)+\f12F_r\p_r\left(\left(1-\th^2\right)\f{F_\th^2}{r^2}+F_r^2\right)\\
=&-\th(1-\th^2)\f{F_\th^3}{r^4}+(1-\th^2)^2\f{F_\th^2 F_{\th\th}}{r^4}+2(1-\th^2)\f{F_\th F_rF_{r\th}}{r^2}\\
&-(1-\th^2)\f{F_\th^2F_r}{r^3}+F_r^2F_{rr}.
\endaligned
\end{equation}
Hence by \eqref{LFx} we have
\begin{equation}\aligned
\mathfrak{L}F=&e^{-2\phi}\bigg(\left(\left(1-\th^2\right)\f{F_\th^2}{r^2}+F_r^2\right)\left(F_{rr}+\f nrF_r+\f{1-\th^2}{r^2}F_{\th\th}-\f{n\th}{r^2}F_\th+\f n2\phi'F_r\right)\\
&-\bigg(-\th(1-\th^2)\f{F_\th^3}{r^4}+(1-\th^2)^2\f{F_\th^2 F_{\th\th}}{r^4}+2(1-\th^2)\f{F_\th F_rF_{r\th}}{r^2}-(1-\th^2)\f{F_\th^2F_r}{r^3}\\
&+F_r^2F_{rr}\bigg)\bigg)+e^{-\phi}\left(F_{rr}+\f nrF_r+\f{1-\th^2}{r^2}F_{\th\th}-\f{n\th}{r^2}F_\th+\f {n-1}2\phi'F_r\right)\\
=&e^{-2\phi}\bigg(n\left(\left(1-\th^2\right)\f{F_\th^2}{r^2}+F_r^2\right)\left(\f{F_r}r+\f{\phi'}2F_r-\f{\th F_\th}{r^2}\right)\\
&+(1-\th^2)\f{F_\th^2}{r^2}\left(\f{\th F_\th}{r^2}+\f{F_r}r\right)+\f{1-\th^2}{r^2}\left(F_\th^2F_{rr}+F_r^2F_{\th\th}-2F_\th F_rF_{r\th}\right)\bigg)\\
&+e^{-\phi}\left(F_{rr}+\f{1-\th^2}{r^2}F_{\th\th}+\f nrF_r-\f{n\th}{r^2}F_\th+\f {n-1}2\phi'F_r\right).
\endaligned
\end{equation}
\end{proof}

\begin{theorem} \label{LF}
Let $\Si$ be an $(n+1)-$dimensional Euclidean space $\ir{n+1},\, n\ge
2,$ endowed with a smooth conformally flat metric $ds^2=e^\phi\sum dx_i^2,$ where  
$\phi'(r)\ge-2(1-\k)r^{-1}$ and $\f{2}{n}\sqrt{n-1}\le\k\le1$. If
$$F(\th,r)=C\th r^p=Cx_{n+1}r^{p-1}\triangleq \mathcal{F}(x_{n+1},r)$$
with any constant $C>0$ and
$p=\f{n}2\k-\sqrt{\f{n^2\k^2}4-(n-1)},$
then except at the origin we have
\begin{equation}\aligned\label{LFxnr}
\mathfrak{L}\mathcal{F}(x_{n+1},r)\left\{\begin{array}{cc}
     \ge0     & \quad\ \ \ {\rm{if}} \ \ \  (x_1,\cdots,x_n)\in\R^n,\ x_{n+1}\ge 0 \\ [3mm]
     \le0     & \quad\ \ \ {\rm{if}} \ \ \  (x_1,\cdots,x_n)\in\R^n,\ x_{n+1}\le 0
     \end{array}\right..
\endaligned
\end{equation}
\end{theorem}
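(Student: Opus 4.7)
I would prove this by direct substitution into formula~\eqref{LFphi} followed by algebraic simplification, organized so that the final expression factors transparently. First I record the partials of $F = C\theta r^p$: $F_\theta = Cr^p$, $F_{\theta\theta} = 0$, $F_r = Cp\theta r^{p-1}$, $F_{rr} = Cp(p-1)\theta r^{p-2}$, $F_{r\theta} = Cp r^{p-1}$, so that $|\partial F|^2 = C^2 r^{2p-2}[1+(p^2-1)\theta^2]$. Substituting these into the two lines of \eqref{LFphi} and collecting by the common prefactor $C\theta r^{p-2}$, the identity should reorganize as
\[
\mathfrak{L}F = C\theta\, r^{p-2}\, e^{-\phi}\Bigl\{e^{-\phi} C^2 r^{2p-2}\, H(\theta,\psi) + B'(\psi)\Bigr\},
\]
with $\psi := \phi'(r) r/2$, $B'(\psi) := p^2 + (n-1)p - n + (n-1)p\psi$, and $H(\theta,\psi) := n[1+(p^2-1)\theta^2][(p-1)+p\psi] + (1-p^2)(1-\theta^2)$. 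Since $C, r, e^{-\phi} > 0$, agreement between the sign of $\mathfrak{L}F$ and that of $x_{n+1} = r\theta$ reduces to non-negativity of the braced quantity; as $C > 0$ is arbitrary (letting $C \to 0$ isolates $B'$ and $C \to \infty$ isolates $H$), this is equivalent to the two separate inequalities $B' \geq 0$ and $H \geq 0$.

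The key structural observation is that $B'$ and $H$ are both affine in $\psi$ with strictly positive slopes $(n-1)p$ and $np[1+(p^2-1)\theta^2]$ respectively. The hypothesis $\phi'(r) \geq -2(1-\kappa)/r$ reads $\psi \geq \kappa - 1$, so each of $B'$ and $H$ attains its minimum over the admissible range at $\psi = \kappa - 1$ --- exactly the uncapped spherical-cone metric. It therefore suffices to verify both inequalities at $\psi = \kappa - 1$. There, one uses the defining quadratic $p^2 - n\kappa p + (n-1) = 0$, equivalently $\kappa p = (p^2 + n - 1)/n$, to simplify $(p-1) + p(\kappa - 1) = \kappa p - 1 = (p^2 - 1)/n$; a short calculation then collapses both expressions into
\[
H(\theta, \kappa - 1) = p^2(p^2 - 1)\theta^2, \qquad B'(\kappa - 1) = \frac{(2n-1)(p^2 - 1)}{n}.
\]
Both are $(p^2 - 1)$ times a manifestly non-negative scalar; in the range relevant here ($\kappa \leq 1$, for which the smaller root satisfies $p \geq 1$ --- the regime applicable to capped spherical cones) both are $\geq 0$, and the theorem follows.

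The main obstacle is purely organizational: formula~\eqref{LFphi} produces many cross-terms of mixed degree in $F_\theta, F_r$ and of varying power in $r$, and one must consistently factor out $C\theta r^{p-2}$ to expose the clean $\alpha H + B'$ structure. The conceptual content --- monotonicity in $\psi$ reducing to the equality (cone) case, and the clean factorization by $p^2 - 1$ there --- is essentially forced by the specific choice of $p$ as a root of $p^2 - n\kappa p + (n-1)$ once the algebra is arranged correctly.
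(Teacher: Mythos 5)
Your proposal is correct and is essentially the paper's own computation: substitute the partials of $F=C\theta r^p$ into \eqref{LFphi}, collect, and close the argument with the defining relation $p^2-n\kappa p+(n-1)=0$ together with $p\ge 1$ (the latter forced by the implicit hypothesis $\kappa\le 1$, as you note). Your factorization $\mathfrak{L}F = C\theta r^{p-2}e^{-\phi}\bigl[e^{-\phi}C^2r^{2p-2}H+B'\bigr]$ and the observation that $H,B'$ are affine increasing in $\psi=\phi' r/2$, so their minima over $\psi\ge\kappa-1$ occur at the cone case, is a cleaner packaging of the same step the paper performs when it inserts $\phi'\ge -2(1-\kappa)/r$ into $\theta\,\mathfrak{L}F$; the resulting identities $H(\theta,\kappa-1)=p^2(p^2-1)\theta^2$ and $B'(\kappa-1)=\tfrac{(2n-1)(p^2-1)}{n}$ are exactly the quantities the paper estimates.
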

\begin{proof}
Since $\phi'\ge-2(1-\k)r^{-1}$ for $0<\k\le1$ and $F_r=Cp\th r^{p-1}$. By \eqref{LFphi} except at the origin  we have
\begin{equation}\aligned\label{LFw0}
\th\mathfrak{L}F\ge&\th e^{-2\phi}\bigg(n\left(\left(1-\th^2\right)\f{F_\th^2}{r^2}+F_r^2\right)\left(\f{\k F_r}r-\f{\th F_\th}{r^2}\right)\\
&+(1-\th^2)\f{F_\th^2}{r^2}\left(\f{\th F_\th}{r^2}+\f{F_r}r\right)+\f{1-\th^2}{r^2}\left(F_\th^2F_{rr}+F_r^2F_{\th\th}-2F_\th F_rF_{r\th}\right)\bigg)\\
&+\th e^{-\phi}\left(F_{rr}+\f{1-\th^2}{r^2}F_{\th\th}+\big((n-1)\k+1\big)\f{F_r}r-\f{n}{r^2}\th F_\th\right).
\endaligned
\end{equation}

Furthermore, we take the derivatives of $F$ and get
\begin{equation}\aligned
\th\mathfrak{L}F\ge&C^3\th e^{-2\phi}\bigg(n\Big(\left(1-\th^2\right)r^{2p-2}+\th^2p^2r^{2p-2}\Big)\left(\k\th pr^{p-2}-\th r^{p-2}\right)\\
&+(1-\th^2)r^{2p-2}\left(\th r^{p-2}+\th pr^{p-2}\right)+\f{1-\th^2}{r^2}\left(p(p-1)\th r^{3p-2}-2p^2\th r^{3p-2}\right)\bigg)\\
&+C\th e^{-\phi}\Big(p(p-1)\th r^{p-2}+\big((n-1)\k+1\big)p\th r^{p-2}-n\th r^{p-2}\Big)\\
=&C^3\th e^{-2\phi}\Big(\left(n(\k p-1)+1-p^2\right)(1-\th^2)+np^2(\k p-1)\th^2\Big)\th r^{3p-4}\\
&+C\th e^{-\phi}\Big(p^2+(n-1)\k p-n\Big)\th r^{p-2}.
\endaligned
\end{equation}
Note
$$n(\k p-1)+1-p^2=-\left(p-\f{n\k}2\right)^2+\f{n^2\k^2}4-(n-1)=0.$$
By the definition of $p$, we obtain
\begin{equation}\aligned
p=&\f{n\k}2\left(1-\sqrt{1-\f{4(n-1)}{n^2\k^2}}\right)=\f{n\k}2\left(1-\f{n-2}n\sqrt{1-\f{4(n-1)}{(n-2)^2}\left(\f1{\k^2}-1\right)}\right)\\
\ge&\f{n\k}2\left(1-\f{n-2}n\left(1-\f{2(n-1)}{(n-2)^2}\left(\f1{\k^2}-1\right)\right)\right)=\f1\k\left(1+\f{1-\k^2}{n-2}\right)\ge\f1\k.
\endaligned
\end{equation}
Hence
\begin{equation}\aligned
\th\mathfrak{L}F\ge&C^3e^{-2\phi}np^2(\k p-1)\th^4r^{3p-4}+Ce^{-\phi}\Big(p^2+(n-1)\k p-n\Big)\th^2 r^{p-2}\\
\ge&Ce^{-\phi}(p^2-1)\th^2 r^{p-2}\ge0.
\endaligned
\end{equation}
We complete the proof.
\end{proof}

\begin{remark}
There are other $\mathfrak{L}$-sub(super)harmonic functions on $\Si$. For instance, for all $j>0$, $\mathfrak{L}(jx_{n+1}w^{p-1})\ge0$ on $x_{n+1}\ge0$ and $\mathfrak{L}(jx_{n+1}w^{p-1})\le0$ on $x_{n+1}\le0$, where $w=\sqrt{x_1^2+\cdots+x_n^2}$.
\end{remark}

Denote $B_R=\{(x_1,\cdots,x_{n+1})\in\R^{n+1}|\ x_1^2+\cdots+x_{n+1}^2\le R^2\}$.
\begin{theorem}\label{Existence}
If $n\ge3$ and
$$\f{2}{n}\sqrt{n-1}\le\k<1,$$
then any hyperplane through the origin in $\Si$ as described in
Theorem \ref{LF}, that is,  $\ir{n+1}$ equipped with a particular conformally flat metric, is area-minimizing.
\end{theorem}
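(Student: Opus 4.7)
My plan is to construct a calibration for the hyperplane $P := \{x_{n+1}=0\}$ from the function $\mathcal{F}(x) = x_{n+1}r^{p-1}$ provided by Theorem \ref{LF}, then combine it with the reflection symmetry $x_{n+1}\mapsto -x_{n+1}$ of the metric. Since $\phi$ depends only on $r=|x|$, the ambient metric is $O(n+1)$-invariant, so every hyperplane through the origin is isometric to $P$; it therefore suffices to prove that $P$ is area-minimizing.

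The proposed calibration is $Y := D\mathcal{F}/|D\mathcal{F}|_\Sigma$, defined on $\Sigma\setminus\{0\}$. A direct computation gives $|\partial\mathcal{F}|^2 = r^{2p-4}(r^2 + (p^2-1)x_{n+1}^2)$, which is positive off the origin since $p>1$ (as seen in the proof of Theorem \ref{LF}), so $Y$ is smooth on $\Sigma\setminus\{0\}$ with $|Y|_\Sigma=1$ everywhere. On $P\setminus\{0\}$ the Euclidean gradient reduces to $\partial\mathcal{F} = (0,\ldots,0,w^{p-1})$ with $w = \sqrt{x_1^2+\cdots+x_n^2}$, so $Y|_P = e^{-\phi/2}\partial_{n+1} = \nu_P$, the unit normal. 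To extract the relevant divergence information I view $Y$ as the limit of $Y_C := D(C\mathcal{F})/\sqrt{1+|D(C\mathcal{F})|^2}$ as $C\to\infty$. By the definition of $\mathfrak{L}$,
\[
\div_\Sigma Y_C = (1+|D(C\mathcal{F})|^2)^{-3/2}\,\mathfrak{L}(C\mathcal{F}),
\]
and Theorem \ref{LF} implies this is non-negative on $\{x_{n+1}>0\}$ for every $C>0$; smooth convergence of $Y_C$ to $Y$ on compact subsets of $\Sigma\setminus\{0\}$ then yields $\div_\Sigma Y \geq 0$ on $\{x_{n+1}>0\}$ and $\leq 0$ on $\{x_{n+1}<0\}$.

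For an arbitrary competitor $W$ with $\partial W = \partial(P\cap B_R)$, I first replace $W\cap\{x_{n+1}<0\}$ by its reflection under $x_{n+1}\mapsto-x_{n+1}$, producing $\tilde W\subset\{x_{n+1}\geq 0\}$ with $|\tilde W|=|W|$ and $\partial\tilde W=\partial W\subset P$. Applying the divergence theorem to $Y$ on the region $U\subset\{x_{n+1}\geq 0\}$ enclosed between $\tilde W$ and $P\cap B_R$, after excising a small ball $B_\ep(0)$, yields
\[
0 \leq \int_{U\setminus B_\ep}\div_\Sigma Y \, dV = \int_{\tilde W\setminus B_\ep}\langle Y,\nu_{\mathrm{out}}\rangle\,dA - \int_{(P\cap B_R)\setminus B_\ep}\langle Y,\nu_P\rangle\,dA + \int_{\partial B_\ep\cap U}\langle Y,\nu_\ep\rangle\,dA.
\]
Using $|Y|_\Sigma=1$, $\langle Y,\nu_P\rangle=1$, and the fact that the flux across $\partial B_\ep$ is bounded by the $n$-dimensional area of the sphere and hence tends to zero, one concludes $|P\cap B_R| \leq |\tilde W| = |W|$.

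The main obstacle is the singularity of $Y$ at the origin, where $Y$ is genuinely direction-dependent, so all estimates must be made on $\Sigma\setminus B_\ep(0)$ with $\ep\to 0$. A secondary technicality is making the reflection argument rigorous when $W$ is not transverse to $P$; this is cleanest in the framework of rectifiable currents, where $\tilde W$ remains a valid competitor even if its geometric support has self-intersections along $W\cap P$. The limit $C\to\infty$ in the divergence computation is harmless because $|\partial\mathcal{F}|$ stays bounded below on compact subsets of $\Sigma\setminus\{0\}$.
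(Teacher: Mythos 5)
Your proposal takes a genuinely different route from the paper. The paper uses the functions $\mathcal{F}_j=jx_{n+1}r^{p-1}$ as barriers for a sequence of Dirichlet problems (citing Spruck's existence theorem), shows the resulting minimal graph solutions $u_j$ blow up to $\pm\infty$ on the two sides of the hyperplane, and then extracts the minimizing set $U\times\R$ as a limit of subgraphs via the Giusti theory of minimizing boundaries. You instead package Theorem \ref{LF} into a sub-calibration $Y=D\mathcal{F}/|D\mathcal{F}|_\Sigma$ and run the divergence-theorem argument directly. This is cleaner in the sense that it bypasses the solvability of the Dirichlet problem and the compactness theory for minimizing sets; the identity $\div_\Sigma Y_C=(1+|D(C\mathcal{F})|^2)^{-3/2}\mathfrak{L}(C\mathcal{F})$ and the limit $C\to\infty$ (valid because $|D\mathcal{F}|$ is bounded away from zero on compact subsets of $\Sigma\setminus\{0\}$) correctly give $\div_\Sigma Y\ge0$ on $\{x_{n+1}>0\}$ and $\le 0$ on $\{x_{n+1}<0\}$, and your handling of the origin (flux $O(\ep^n)$, $\div_\Sigma Y\sim r^{-1}\in L^1_{\mathrm{loc}}$) is fine.

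However, the reflection step is where the argument is shaky, and it is also unnecessary. As a current, the reflected competitor $\tilde W=(W\cap\{x_{n+1}\ge0\})\cup R(W\cap\{x_{n+1}<0\})$ does \emph{not} obviously have $\partial\tilde W=\partial W$: the reflection $R$ is orientation-reversing, so along the slice $W\cap\{x_{n+1}=0\}$ sheets may acquire opposite orientations and cancel, which can change both the mass and the boundary. You acknowledge this as a ``secondary technicality,'' but it is really the heart of the matter and is not resolved by pointing to the currents framework. The cleanest fix is to drop the reflection entirely and work, as the paper implicitly does (its conclusion is that $T$ ``minimizes perimeter''), with sets of finite perimeter: if $E$ is a competitor with $E\triangle B_R^+\Subset B_R$, then
\[
\mathrm{Per}(E;B_R\setminus B_\ep)-\mathrm{Per}(B_R^+;B_R\setminus B_\ep)\;\ge\;\int_{B_R\setminus B_\ep}\bigl(\chi_{B_R^+}-\chi_E\bigr)\,\div_\Sigma Y + O(\ep^n),
\]
and the integrand is \emph{automatically} non-negative: on $\{x_{n+1}>0\}$ one has $\chi_{B_R^+}=1\ge\chi_E$ and $\div_\Sigma Y\ge0$, while on $\{x_{n+1}<0\}$ one has $\chi_{B_R^+}=0\le\chi_E$ and $\div_\Sigma Y\le0$. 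This makes the competitor's position relative to $P$ irrelevant and removes the need for any reflection. With that replacement your argument is a correct and attractively short alternative to the paper's proof.
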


\begin{proof}
We shall show
that  the hyperplane $T=\{(x_1,\cdots,x_{n+1})\in\R^{n+1}|\
x_{n+1}=0\}$ in $\Si$ with the induced metric is area-minimizing.

Set $\tilde{\phi}(r)=\int_0^re^{\f{\phi(r)}2}dr$. Let us define
$\r=\tilde{\phi}(r)$ and $\la(\r)=r\tilde{\phi}'(r)$, then the
Riemannian metric in $\Si$ can be written in  polar coordinates as
$ds^2=d\r^2+\la^2(\r)d\th^2$, where $d\th^2$ is the  standard metric on $\S^{n}(1)$. Moreover,
\begin{equation}\aligned\label{dladr}
\f{d\la}{d\r}=\f{d\la}{dr}\f{dr}{d\r}=\left(\tilde{\phi}'+r\tilde{\phi}''\right)\f1{\tilde{\phi}'}=1+r(\log\tilde{\phi}')'=1+\f12r\phi'\ge1-(1-\k)=\k.
\endaligned
\end{equation}

When $n\ge 3$ and $q=p-1$ for $p$ as in the statement of Theorem \ref{LF},
let $\mathcal{F}_j(x_{n+1},r)=jx_{n+1}r^q$ for $j>0$ with $r=\sqrt{x_1^2+\cdots+x_{n+1}^2}$. By Theorem \ref{LF} we obtain
\begin{equation}\aligned
\mathfrak{L}\mathcal{F}_j(x_{n+1},r)\left\{\begin{array}{cc}
     \ge0     & \quad\ \ \ {\rm{in}} \ \ \  \{(x_1,\cdots,x_{n+1})\in\R^{n+1}|\ x_{n+1}\ge 0\}\setminus\{0\} \\ [3mm]
     \le0     & \quad\ \ \ {\rm{in}} \ \ \  \{(x_1,\cdots,x_{n+1})\in\R^{n+1}|\ x_{n+1}\le 0\}\setminus\{0\}
     \end{array}\right..
\endaligned
\end{equation}
Combining \eqref{dladr} and formula (2.9) in \cite{D}, we know that any geodesic sphere $\p D_\r\subset\Si$ centered at the origin has positive inward mean curvature $H=(n-1)\f{\la'(\r)}{\la(\r)}>0$ for any $\r>0$.
By the existence theorem for the  Dirichlet problem for minimal hypersurfaces in $\Si\times\R$, see Theorem 1.5 in \cite{Sp},
for any constant $R>0$ and $j=1,2,\cdots,\infty$, there is a solution $u_j\in C^{\infty}(B_{jR})$ to the Dirichlet problem
\begin{equation}\aligned
\left\{\begin{array}{cc}
     \mathfrak{L}u_j=0     & \quad\ \ \ {\rm{in}}  \  B_{jR} \\ [3mm]
     u_j=\mathcal{F}_j    & \quad\ \ \ {\rm{on}}  \  \p B_{jR}
     \end{array}\right..
\endaligned
\end{equation}
By  symmetry,  $u_j=0$ on $B_{R^*}\cap T$ for any fixed $R^*>0$. In fact, by the definition \eqref{LFx} of $\mathfrak{L}$ in a conformally flat metric, $-u_j(x_1,\cdots,x_n,-x_{n+1})$ is also a smooth solution to the above Dirichlet problem. Since Lemma \ref{comp} implies the uniqueness of the minimal graph, we get $u_j(x_1,\cdots,x_n,x_{n+1})=-u_j(x_1,\cdots,x_n,-x_{n+1})$, from which it follows that $u_j=0$ on $B_{R^*}\cap T$.
Let $U=\left\{(x_1,\cdots,x_{n+1})\in\R^{n+1}\big|\  x_{n+1}> 0\right\}$, then the comparison theorem on $B_{R^*}\setminus\{0\}$ implies
\begin{equation}\aligned
\lim_{j\rightarrow\infty}u_j\ge\lim_{j\rightarrow\infty}\mathcal{F}_j=+\infty\qquad \mathrm{in}\ \ B_{R^*}\cap U\\
\endaligned
\end{equation}
and
\begin{equation}\aligned
\lim_{j\rightarrow\infty}u_j\le\lim_{j\rightarrow\infty}\mathcal{F}_j=-\infty\qquad \mathrm{in}\ \ B_{R^*}\cap (\R^{n+1}\setminus \overline{U}).
\endaligned
\end{equation}
Let $U_j$ denote the subgraph of $u_j$ in $B_{R^*}\times\R$, namely,
$$U_j=\left\{(x,t)\in B_{R^*}\times\R\big|\ t<u_j(x)\right\}.$$
Clearly, its characteristic function $\chi_{_{U_j}}$ converges in $L^1_{loc}(B_{R^*}\times\R)$ to $\chi_{_{U\times\R}}$.
By an analogous argument as in Lemma 9.1 in \cite{Gi} for the Euclidean case,
for any compact set $E\subset B_{R^*}\times\R$, that  Graph$(u_j)\triangleq\{(x,u_j(x))|\ x\in\R^{n+1}\}$ is an area-minimizing hypersurface implies that $(U\times\R)\cap E$ is a minimizing set in $E$.
Hence $U\times\R$ is a minimizing set in $B_{R^*}\times\R\subset
\Si\times\R$. By an analogous argument as in Proposition 9.9 in
\cite{Gi} for the Euclidean case, $U$ is a minimizing set in $B_{R^*}$, namely, the hyperplane $T$ minimizes the  perimeter in $B_{R^*}$. Since $R^*$ is arbitrary, we complete the proof.
\end{proof}

As we showed in the previous section, on the cone $CS_\k$ the usual metric can be rewritten as a conformally flat one. We shall therefore  modify the constructions  from the cone $CS_\k$.

\begin{lemma}
Let $\Lambda$ be the rotationally symmetric function on $\R^{n+1}$ defined by
\begin{equation}\aligned\label{La}
\La(x)=\left\{\begin{array}{cc}
      \f{\sqrt{1-\k^2}}{\k}\sqrt{x_1^2+\cdots+x_{n}^2}&      \hskip0.6in \ \ \ {\rm{on}} \ \ \   \R^{n+1}\setminus B_1 \\
      \f{\sqrt{1-\k^2}}{\k}\left(1-\f{2}{\pi}\int^1_{|x|}\left(\arctan \xi(s)\right)ds\right)&      \quad \ \ \ {\rm{on}} \ \ \ B_1
     \end{array}\right.,
\endaligned
\end{equation}where $\xi(s)=s\left(e^{\f1{1-s^2}}-e\right).$
It is a smooth convex function on $\ir{n+1}$.
\end{lemma}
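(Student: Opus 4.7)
The plan is to reduce everything to a one-dimensional analysis, since $\La$ depends only on a single radial variable $r=\sqrt{x_1^2+\cdots+x_n^2}$ (the notations $|x|$ and $B_1$ in the inner formula being interpreted in this cylindrical sense, so that the two pieces agree on $\{r=1\}$). Write $c=\sqrt{1-\k^2}/\k$ and $\phi(r)=1-\tfrac{2}{\pi}\int_r^1 \arctan\xi(s)\,ds$, so that $\La=c\,r$ on $\{r\ge 1\}$ and $\La=c\,\phi(r)$ on $\{r\le 1\}$. Smoothness is automatic on each open piece, so I will check three things in order: (i) $C^\infty$-matching at $r=1$; (ii) smoothness at the axis $r=0$; (iii) convexity via nonnegativity of $\phi'$ and $\phi''$.

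For (i), values match since $\phi(1)=1$, and first derivatives match because $\xi(s)\to+\infty$ as $s\to 1^-$ forces $\phi'(1^-)=\tfrac{2}{\pi}\arctan\xi(1^-)=1$, which equals the slope of the outer linear piece. For derivatives of order $\ge 2$ the outer side contributes zero, so I must show every higher derivative of $\phi$ tends to zero as $r\to 1^-$. This is the heart of the argument and the \emph{main obstacle}. Each derivative of $\arctan\xi(r)$ is a rational combination of $(1+\xi(r)^2)^{-1}$ and derivatives of $\xi$; and each derivative of $\xi$ is a polynomial expression in $(1-r^2)^{-1}$ times $e^{1/(1-r^2)}$, while $(1+\xi^2)^{-1}$ decays like $e^{-2/(1-r^2)}$, so the exponential damping always dominates. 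This is precisely the reason for choosing the bump $\xi(s)=s(e^{1/(1-s^2)}-e)$, and a careful induction on the order of the derivative makes the decay rigorous.

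For (ii), I observe that $\xi$ is real-analytic and odd on $(-1,1)$, because $e^{1/(1-s^2)}-e$ is an even real-analytic function there. Consequently $\arctan\xi$ is odd, and $\int_0^r \arctan\xi(s)\,ds$ is an even real-analytic function of $r$, so $\phi$ is smooth as a function of $r^2$ near $r=0$. This is exactly what is needed for $\La$ to extend to a smooth function of $(x_1,\ldots,x_n)$ across the axis, and $\La$ is trivially smooth in $x_{n+1}$.

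For (iii), a direct computation shows that any $v\in\R^{n+1}$ with radial part of size $\alpha$ and orthogonal part (inside the $(x_1,\ldots,x_n)$-plane) of size $\beta$ satisfies $v^{\top}(\nabla^2\La)v=\alpha^2 \La''(r)+\beta^2\,\La'(r)/r$, so convexity reduces to $\La'\ge 0$ and $\La''\ge 0$. Outside $B_1$, $\La=cr$ gives $\La'=c>0$ and $\La''=0$. Inside, $\La'(r)=c\cdot\tfrac{2}{\pi}\arctan\xi(r)\ge 0$ since $e^{1/(1-s^2)}\ge e$ on $[0,1)$; and $\La''(r)=c\cdot\tfrac{2}{\pi}\cdot\xi'(r)/(1+\xi(r)^2)\ge 0$, since $\xi'(s)=(e^{1/(1-s^2)}-e)+\tfrac{2s^2}{(1-s^2)^2}e^{1/(1-s^2)}$ is a sum of nonnegative terms on $[0,1)$. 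The a priori indeterminate ratio $\La'(r)/r$ at $r=0$ is handled by the expansion $\xi(s)=O(s^3)$ coming from the odd-analytic structure in (ii), which yields $\La'(r)=O(r^3)$ and hence $\La'(r)/r\to 0$. Together with (i) and (ii), this establishes that $\La$ is smooth and convex on $\R^{n+1}$.
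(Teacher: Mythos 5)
Your proof follows essentially the same route as the paper's: both verify smoothness at two places --- matching at the gluing sphere $\{r=1\}$ by exploiting the explosive growth of $\xi$ (so that $\arctan\xi\to\pi/2$ and all its higher derivatives are killed by the factor $(1+\xi^2)^{-1}$), and smoothness at the center from the oddness of $\arctan\xi$, which the paper phrases equivalently as a power series for $\arctan\xi(\sqrt{t})/\sqrt{t}$ in $t=|x|^2$ --- and then read off convexity from $\La'\ge0$, $\La''\ge0$ in the radial Hessian. Your gesture at ``a careful induction'' for the vanishing of the higher-order matching derivatives at $r=1$ is at the same level of precision as the paper's bare assertion that $(\p_r)^{j-1}\left(\xi'/(1+\xi^2)\right)\big|_{r=1}=0$, so no deduction there. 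The one thing you should correct is your reinterpretation of the radial variable as the cylindrical radius $r=\sqrt{x_1^2+\cdots+x_n^2}$. The paper intends the full radius $|x|=\sqrt{x_1^2+\cdots+x_{n+1}^2}$ throughout: the $x_n^2$ in the first line of \eqref{La} is a typo for $x_{n+1}^2$, as one sees from the paper's derivative formula $\p_i\La=\tfrac{2\sqrt{1-\k^2}}{\k\pi}\tfrac{x_i}{|x|}\arctan\xi(|x|)$ (valid for all $i$, hence $|x|$ must be the full radius) and from the subsequent identification of the graph of $\La$ over $\R^{n+1}$ with the rotationally symmetric capped cone $MCS_\k$, not a cylinder over a lower-dimensional cone. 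Since your radial reduction, the oddness argument, and the Hessian computation carry over verbatim with $r=|x|$, this is an interpretation mismatch rather than a mathematical gap, but it is worth fixing so that the function you construct is actually the one the paper uses.
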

\begin{proof}
In fact, $\xi'(0)=0$, $\xi^{(2k)}(0)=0$ for $k\ge0$ and $\xi^{(j)}(1)=+\infty$ for $j\ge0$. Then on $B_1$
\begin{equation}\aligned
\p_i\Lambda(x)=&\f{2\sqrt{1-\k^2}}{\k\pi}\f{x_i}{|x|}\arctan\xi(|x|),\\
\p_{ij}\Lambda(x)=&\f{2\sqrt{1-\k^2}}{\k\pi}\left(\de_{ij}-\f{x_ix_j}{|x|^2}\right)\f{\arctan\xi}{|x|}+\f{2\sqrt{1-\k^2}}{\k\pi}\f{\xi'}{1+\xi^2}\f{x_ix_j}{|x|^2}.
\endaligned
\end{equation}
Since
$$\f{\arctan\xi(\sqrt{t})}{\sqrt{t}}=\sum_{k=0}^{\infty}\f{(-1)^k}{2k+1}t^k\left(e^{\f1{1-t}}-e\right)^{2k+1}$$
in $[0,\ep]$ for small $\ep>0$,
$t^{-\f12}\arctan\xi(\sqrt{t})$ is a smooth function for $t\in[0,1)$ and
$$\Lambda(x)=\f{\sqrt{1-\k^2}}{\k}\left(1-\f{1}{\pi}\int^1_{|x|^2}\f{\arctan \xi(\sqrt{t})}{\sqrt{t}}dt\right)$$
is a smooth convex function on $B_1$. Denote $\La(r)=\La(|x|)$, then the radial derivative of $\Lambda$ at 1 is
$$\lim_{r\rightarrow1}\p_r\Lambda(r)=\f{2\sqrt{1-\k^2}}{\k\pi}\arctan\xi(1)=\f{\sqrt{1-\k^2}}{\k},$$
and the higher order  radial derivative of $\Lambda$ at 1 is
\begin{equation}\aligned\nonumber
\lim_{r\rightarrow1}(\p_r)^{j+1}\Lambda(r)=&\f{2\sqrt{1-\k^2}}{\k\pi}(\p_r)^j\arctan\xi(r)\Big|_{r=1}\\
=&\f{2\sqrt{1-\k^2}}{\k\pi}(\p_r)^{j-1}\left(\f{\xi'}{1+\xi^2}\right)\bigg|_{r=1}=0\qquad \mathrm{for}\ j\ge1.
\endaligned
\end{equation}
Hence $\Lambda$ is a smooth convex function on $\R^{n+1}$.
\end{proof}

Now we suppose that $MCS_\k$ is an $(n+1)$-dimensional smooth entire graphic hypersurface in $\R^{n+2}$ with the defining function $\Lambda$. We see that
it has non-negative sectional curvature everywhere. In fact, $MCS_\k$
is a $\k-$sphere cone $CS_\k$ with a smooth cap, which we shall call the modified $\k-$ sphere cone.

We already showed that the metric of  the $\k-$sphere cone is
conformally flat,  and we shall now also derive this for $MCS_\k$.

\begin{lemma}\label{CF}
The $(n+1)-$dimensional $MCS_\k$ has a smooth conformally flat metric $$ds^2=e^{\Phi(r)}\sum_{1\le i\le n+1}dx_i^2$$
on $\ir{n+1}$ with $-\f2r(1-\k)\le\Phi'\le0$.
\end{lemma}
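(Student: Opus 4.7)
The plan is to use the rotational symmetry of $\La$ to put the induced metric of $MCS_\k \subset \ir{n+2}$ into conformally flat form by a single radial rescaling of the base $\ir{n+1}$, and then to reduce the bound on $\Phi'$ to the pointwise inequality $|\La'|\le \sqrt{1-\k^2}/\k$, which is already built into the defining formula \eqref{La}.

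Let $s=|x|$ denote the Euclidean norm in the original parametrization $x\mapsto (x,\La(x))$ of $MCS_\k$. Since $\La$ is rotationally symmetric, the induced metric, pulled back to $\ir{n+1}$, has the polar form
\begin{equation*}
\big(1+\La'(s)^2\big)\,ds^2 + s^2\,d\th^2,
\end{equation*}
with $\La'$ the radial derivative. I then introduce a new radial coordinate $r=r(s)$ by the ODE
\begin{equation*}
\f{d\log r}{ds}=\f{\sqrt{1+\La'(s)^2}}{s},
\end{equation*}
normalised so that $r(s)/s$ tends to a finite positive limit as $s\to 0^+$. Although the right hand side is formally singular at $s=0$, $\La'(s)/s$ is smooth in $s$ by rotational symmetry of $\La$; rewriting the equation as $\f{d}{ds}\log(r/s) = (\sqrt{1+\La'(s)^2}-1)/s$ thus exhibits a smooth right hand side, so $r(s)/s$ extends to a smooth positive function of $s^2$ on $[0,\infty)$. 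Consequently $y = x\, r(s)/s$ defines a smooth radial diffeomorphism of $\ir{n+1}$, and matching the angular and radial parts of the metric in the new coordinates produces the desired conformally flat expression
\begin{equation*}
e^{\Phi(r)}\sum_{i=1}^{n+1} dy_i^2, \qquad e^{\Phi(r)} = \f{s(r)^2}{r^2},
\end{equation*}
with $\Phi$ smooth on $[0,\infty)$.

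To verify the bounds on $\Phi'$, I differentiate $\Phi(r)=2\log s-2\log r$ and use $ds/dr=s/(r\sqrt{1+\La'^2})$ to obtain
\begin{equation*}
\Phi'(r)=\f{2}{r}\left(\f{1}{\sqrt{1+\La'(s)^2}}-1\right).
\end{equation*}
The upper bound $\Phi'\le 0$ is immediate. The lower bound $\Phi'(r)\ge -\f{2}{r}(1-\k)$ is equivalent to $|\La'(s)|\le \sqrt{1-\k^2}/\k$, which holds pointwise by the very definition of $\La$: on $\ir{n+1}\setminus B_1$ equality is attained with $\La'(s)=\sqrt{1-\k^2}/\k$, while on $B_1$ the formula \eqref{La} yields $\La'(s) = \f{2\sqrt{1-\k^2}}{\k\pi}\arctan\xi(s)$ and $|\arctan|\le\pi/2$. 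The only genuinely delicate point in the whole argument is the smoothness of the coordinate change across the origin, where the ODE for $\log r$ has an apparent pole; this is removed, as indicated above, by rewriting it in terms of $\log(r/s)$ and invoking the smoothness of $\La'(s)/s$ that comes from rotational symmetry of $\La$.
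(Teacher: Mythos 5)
Your proof is correct and takes essentially the same route as the paper: both convert the rotationally symmetric metric to conformally flat form by solving a radial ODE for the new coordinate, and both reduce the bound on $\Phi'$ to the pointwise slope bound $|\La'|\le\sqrt{1-\k^2}/\k$ that is built into the definition of $\La$. You work directly in the graph parametrization over $\ir{n+1}$ and so bypass the paper's intermediate passage through geodesic polar coordinates $\r$ and the auxiliary functions $\z,\psi$ (in the paper's notation $\z'=1/\sqrt{1+\La'^2}$, so your expression $\Phi'=\f2r\bigl(1/\sqrt{1+\La'^2}-1\bigr)$ matches their $\Phi'=2\psi''/\psi'=2(\z'-1)/r$), which is a modest streamlining but not a different method.
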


\begin{proof} $MCS_\k$ is defined as an entire graph on $\ir{n+2}$. Its induced metric can also be written in
polar coordinates as
\begin{equation}\aligned\label{Sisi}
ds^2=d\r^2+\lambda^2(\r)d\th^2,
\endaligned
\end{equation}
where $d\th^2$ is a standard metric on $\S^{n}(1)$, and
\begin{equation}\aligned\label{lar}
\lambda(\r)=\left\{\begin{array}{cc}
     \k\left(\r+\f1\k-\r_0\right)     & \quad\ \ \ {\rm{for}}  \  \r\ge\r_0 \\ [3mm]
     \z(\r)    & \quad\ \ \ {\rm{for}}  \  0\le\r\le\r_0
     \end{array}\right..
\endaligned
\end{equation}
Here
$$1<\r_0=\int_0^1\sqrt{1+(\p_r\Lambda)^2}dr<\f1\k,$$
and the inverse function of $\z$ satisfies
$$\z^{-1}(s)=\int_0^s\sqrt{1+(\p_r\Lambda)^2}dr,$$
where $\La$ is defined in the last lemma.  Moreover, $\k\le\z'\le1$.

Let $\psi(r)$ be a function on $\Big[0,\left(\f1\k\right)^{\f1\k}\Big)$ with $\psi\left(\left(\f1\k\right)^{\f1\k}\right)=\r_0$ and
\begin{equation}\aligned\label{psi'}
\psi'(r)=\f1r\z(\psi(r))\qquad \mathrm{on} \quad \Big[0,\left(\f1\k\right)^{\f1\k}\Big).
\endaligned
\end{equation}
In fact, let $\tilde{\z}(\r)=\int_1^\r\f1{\z(t)}dt$ for $\r\in(0,\r_0]$, then we integrate the above ordinary differential equation and obtain
$$\tilde{\z}(\psi(r))-\tilde{\z}(\r_0)=\log r+\f1\k\log\k.$$
Since $\tilde{\z}$ is a monotonic function, we can solve this equation for $\psi$.
Since $\k \r\le\z(\r)\le \r$ on $[0,\r_0]$, a standard comparison argument implies that
$$\left(\f1\k\right)^{-\f1\k}\r_0r\le\psi(r)\le\k\r_0r^{\k}\qquad \mathrm{on} \quad \Big[0,\left(\f1\k\right)^{\f1\k}\Big].$$
In particular, $\psi(0)=0$.
Since
$$\psi''(r)=\f{\z'}r\psi'-\f{\z}{r^2}=\f{\z}{r^2}(\z'-1),$$
then,
\begin{equation}\label{cf}
\f{\k-1}r\le\f{\psi''(r)}{\psi'(r)}=\f{\z'-1}r\le0.
\end{equation}
Let
\begin{equation}\aligned\label{ss1}
\r=\tilde{\psi}(r)=\left\{\begin{array}{cc}
     r^\k-\f1\k+\r_0     & \quad\ \ \ {\rm{for}}  \  r\ge\left(\f1\k\right)^{\f1\k} \\ [3mm]
     \psi(r)    & \quad\ \ \ {\rm{for}}  \  0\le r\le\left(\f1\k\right)^{\f1\k}
     \end{array}\right.,
\endaligned
\end{equation}
then $\tilde{\psi}$ also satisfies \eqref{psi'} and hence $\tilde{\psi}$ is smooth on $[0,\infty)$.
Set
\begin{equation}\aligned\label{ss2}
e^{\Phi(r)}=\left(\tilde{\psi}'(r)\right)^2=\left\{\begin{array}{cc}
     \k^2 r^{2\k-2}     & \quad\ \ \ {\rm{for}}  \  r\ge\left(\f1\k\right)^{\f1\k} \\ [3mm]
     (\psi')^2(r)    & \quad\ \ \ {\rm{for}}  \  0\le r\le\left(\f1\k\right)^{\f1\k}
     \end{array}\right.,
\endaligned
\end{equation}
then
\begin{equation}\aligned\label{ss3}
ds^2=e^{\Phi(r)}dr^2+e^{\Phi(r)}r^2d\th^2=e^{\Phi(r)}\sum_{1\le i\le n+1}dx_i^2,
\endaligned
\end{equation}
where $r^2=\sum_ix_i^2$. By (\ref{siC}) and (\ref{cf}) we have  $$-\f2r(1-\k)\le\Phi'\le0.$$
\end{proof}

Now, Lemma \ref{CF} and  Theorem \ref{Existence} yield the following conclusion.
\begin{theorem}\label{MCSK}
Let $n\ge3$. If
$$\f{2}{n}\sqrt{n-1}\le\k<1,$$
then any hyperplane through the origin in $MCS_\k$ is area-minimizing.
\end{theorem}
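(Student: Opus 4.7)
The plan is to apply Theorem \ref{Existence} directly, with the conformally flat representation of the $MCS_\k$ metric furnished by Lemma \ref{CF}. Essentially all of the genuine work has already been done: Theorem \ref{Existence} proves that the hyperplane $T = \{x_{n+1} = 0\}$ is area-minimizing for any conformally flat metric $e^{\phi(r)}\sum dx_i^2$ on $\ir{n+1}$ satisfying $\phi'(r) \ge -2(1-\k)r^{-1}$ and $\k \ge \tfrac{2}{n}\sqrt{n-1}$, so my task is only to check that the ambient metric of $MCS_\k$ fits into this framework and to pass from $T$ to arbitrary hyperplanes through the origin via symmetry.

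First I would invoke Lemma \ref{CF} to write the $MCS_\k$ metric as
\begin{equation*}
ds^2 = e^{\Phi(r)} \sum_{i=1}^{n+1} dx_i^2,
\end{equation*}
with $\Phi$ smooth on $[0,\infty)$ and satisfying $-\tfrac{2}{r}(1-\k) \le \Phi'(r) \le 0$. The lower bound on $\Phi'$ is exactly the curvature-decay hypothesis required by Theorem \ref{LF} and hence by Theorem \ref{Existence}, while the dimension assumption $n \ge 3$ and the range $\tfrac{2}{n}\sqrt{n-1} \le \k < 1$ appearing in Theorem \ref{MCSK} are precisely those of Theorem \ref{Existence}. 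Thus Theorem \ref{Existence} applies verbatim to $\Si = MCS_\k$ and yields that the specific hyperplane $T = \{x_{n+1} = 0\}$ is area-minimizing.

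To finish, I would use the rotational symmetry of the metric: since $\Phi$ depends only on $r = |x|$, the ambient metric is invariant under the full orthogonal group $O(n+1)$ acting on the $x$-coordinates. Any hyperplane $T'$ through the origin in $\ir{n+1}$ is the image of $T$ under some rotation $R \in O(n+1)$, and this rotation is an isometry of $(\ir{n+1}, e^{\Phi}\sum dx_i^2)$. Composing with the obvious lift $R \times \mathrm{id}$ to $\Si \times \R$, the area-minimizing property of $T$ transfers to $T'$, completing the proof.

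The main obstacle, had one existed, would have been matching the conformal-factor bound in Theorem \ref{Existence} to the geometry of the smoothed cone; but Lemma \ref{CF} was designed precisely to produce the inequality $\Phi'(r) \ge -\tfrac{2(1-\k)}{r}$, so there is no remaining difficulty and the argument reduces to a bookkeeping step.
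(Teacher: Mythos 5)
Your proposal is correct and matches the paper's argument exactly: the paper's proof of Theorem \ref{MCSK} is simply the observation that Lemma \ref{CF} puts $MCS_\k$ in the form required by Theorem \ref{Existence}. Your added remark about transferring from $T=\{x_{n+1}=0\}$ to an arbitrary hyperplane via $O(n+1)$-symmetry is harmless but redundant, since Theorem \ref{Existence} already states its conclusion for any hyperplane through the origin (the symmetry step being implicit in that proof).
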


\begin{remark}\label{MCSk}
Let $\{e_\a\}_{\a=1}^{n}\bigcup\{\f{\p}{\p\r}\}$ be an orthonormal basis at the considered point of $MCS_\k$. Compared with \eqref{SRCS} we calculate the sectional curvature and Ricci curvature of $MCS_\k$ as follows (see Appendix A in \cite{Pl} for instance).
\begin{equation}\aligned\label{secRicSi}
K_{MCS_\k}\left(\f{\p}{\p\r},e_\a\right)=-\f{\la''}{\la}\ge0,\quad &K_{{MCS_\k}}(e_\a,e_\be)=\f{1-(\la')^2}{\la^2}\ge0,\\ Ric_{{MCS_\k}}\left(\f{\p}{\p\r},e_\a\right)=0,\quad
&Ric_{{MCS_\k}}\left(\f{\p}{\p\r},\f{\p}{\p\r}\right)=-n\f{\la''}{\la}\ge0,\\
Ric_{{MCS_\k}}(e_\a,e_\be)=\bigg((n-1)&\f{1-(\la')^2}{\la^2}-\f{\la''}{\la}\bigg)\de_{\a\be}
\ge0.
\endaligned
\end{equation}
In particular, for $\r\ge\r_0$ with $1<\r_0<\f1\k$ we have
\begin{equation}\aligned\label{secRicSi1}
K_{{MCS_\k}}\left(\f{\p}{\p\r},e_\a\right)=0,\quad &K_{{MCS_\k}}(e_\a,e_\be)=\f{1-\k^2}{\k^2(\r+\f1\k-\r_0)^2},\\ Ric_{{MCS_\k}}\left(\f{\p}{\p\r},\f{\p}{\p\r}\right)=&Ric_{{MCS_\k}}\left(\f{\p}{\p\r},e_\a\right)=0,\\
Ric_{{MCS_\k}}(e_\a,e_\be)=(n-1)&\f{1-\k^2}{\k^2(\r+\f1\k-\r_0)^2}\de_{\a\be}.
\endaligned
\end{equation}
From the construction above we see that $MCS_\k$ is a complete simply connected manifold with non-negative sectional curvature.
\end{remark}

\begin{remark}
Since $MCS_\k$ in Theorem \ref{Existence} cannot split off a Euclidean
factor $\ir{}$ isometrically, the Cheeger-Gromoll splitting theorem
\cite{CG} implies that it does not contain a line. Consequently, this gives a negative  answer to the question (1) in \cite{An}, which is

If $M$ is a complete area-minimizing hypersurface in a complete simply connected manifold $N$ of non-negative curvature, does it follow that $N$ contains a line, that is a complete length-minimizing geodesic?
\end{remark}

If we define for each $x\in\R^n$
\begin{equation}\aligned\nonumber
\widetilde{\Lambda}(x)=\f{2\sqrt{1-\k^2}}{\pi\k}\int^{|x|}_0\arctan sds,
\endaligned
\end{equation}
then $\widetilde{\La}$ is a smooth strictly convex function on $\R^n$ and the hypersurface $\widetilde{\Si}=\{(x,\widetilde{\La}(x))|\ x\in\R^n\}$ is a smooth manifold with positive sectional curvature everywhere. In fact, $\widetilde{\Si}$ can be seen as a Riemannian manifold $(\R^n,\tilde{\si})$ with
$$\tilde{\si}=d\r^2+\tilde{\la}^2(\r)d\th^2$$
in polar coordinates, where the inverse function of $\tilde{\la}$ satisfies
$$\tilde{\la}^{-1}(s)=\int_0^s\sqrt{1+(\p_r\widetilde{\Lambda})^2}dr=\int_0^s\sqrt{1+\f{4(1-\k^2)}{\pi^2\k^2}(\arctan r)^2}dr.$$
Hence
$$1\ge\tilde{\la}'(s)=\left(1+\f{4(1-\k^2)}{\pi^2\k^2}(\arctan \tilde{\la}(s))^2\right)^{-\f12}>\k,$$
and
$$\tilde{\la}''(s)=-\left(1+\f{4(1-\k^2)}{\pi^2\k^2}(\arctan \tilde{\la}(s))^2\right)^{-\f32}\f{4(1-\k^2)}{\pi^2\k^2}\arctan \tilde{\la}(s)\f{\tilde{\la}'(s)}{1+\tilde{\la}^2(s)}.$$
Clearly,
\begin{equation}\aligned\nonumber
\lim_{s\rightarrow\infty}\f{\tilde{\la}(s)}s=\lim_{s\rightarrow\infty}\tilde{\la}'(s)=\k,\qquad \mathrm{and}\qquad \lim_{s\rightarrow\infty}(s^2\tilde{\la}''(s))=-\f2{\pi}(1-\k^2).
\endaligned
\end{equation}

If $\{\p_\r\}$ and $\{e_\a\}_{\a=1}^{n-1}$ are an orthonormal basis of $\widetilde{\Si}$, then the sectional curvature of $\widetilde{\Si}$ is
\begin{equation}\aligned\nonumber
0<K(\p_\r,e_\a)=-\f{\tilde{\la}''}{\tilde{\la}}\thicksim\f{2(1-\k^2)}{\pi\k s^3}, \qquad K(e_\a,e_\be)=\f{1-\tilde{\la}'^2}{\tilde{\la}^2}\thicksim\f{1-\k^2}{\k^2 s^2}.
\endaligned
\end{equation}
Clearly,
$$\lim_{s\rightarrow0}\f{1-\tilde{\la}'^2(s)}{\tilde{\la}^2(s)}=\f{4(1-\k^2)}{\pi^2\k^2}>0.$$
Hence $\widetilde{\Si}=\{(x,\widetilde{\La}(x))|\ x\in\R^n\}$ has positive sectional curvature everywhere.

\begin{theorem}\label{PS}
Let $n\ge4$ and $\widetilde{\Si}=(\R^n,\tilde{\si})$ be a complete manifold with positive sectional curvature as  above. If
$$\f{2}{n-1}\sqrt{n-2}\le\k<1,$$
then any hyperplane through the origin in $\widetilde{\Si}=(\R^n,\tilde{\si})$ is area-minimizing.
\end{theorem}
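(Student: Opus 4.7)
The plan is to reduce Theorem \ref{PS} to a direct invocation of Theorem \ref{Existence}, exactly as Theorem \ref{MCSK} was deduced from Theorem \ref{Existence} via Lemma \ref{CF}. The key observation is that $\widetilde{\Si}$, being rotationally symmetric with metric $\tilde{\si} = d\r^2 + \tilde{\la}^2(\r) d\th^2$, can be represented globally as $(\R^n, e^{\widetilde{\Phi}(r)} \sum_{i=1}^{n} dx_i^2)$ for a smooth radial function $\widetilde{\Phi}$ satisfying the bound $-\f{2}{r}(1-\k) < \widetilde{\Phi}'(r) \le 0$. Once this conformally flat representation is in place, Theorem \ref{Existence} applied in ambient dimension $n$ (its ``$n+1$'' is our ``$n$'', so its ``$n$'' becomes our ``$n-1$'') yields precisely the threshold $\k \ge \f{2}{n-1}\sqrt{n-2}$ and the dimensional requirement $n-1 \ge 3$, which match the hypotheses of Theorem \ref{PS} verbatim.

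For the conformal representation, I would mimic the construction in Lemma \ref{CF} by introducing a radial change of variable $\r = \tilde{\psi}(r)$ determined by the singular ODE
$$\tilde{\psi}'(r) = \f{1}{r}\,\tilde{\la}(\tilde{\psi}(r)), \qquad \tilde{\psi}(0) = 0,$$
so that the metric becomes $(\tilde{\psi}'(r))^2(dr^2 + r^2 d\th^2) = e^{\widetilde{\Phi}(r)} \sum_i dx_i^2$ with $e^{\widetilde{\Phi}} = (\tilde{\psi}')^2$. Because $\tilde{\la}^{-1}$ is smooth on $[0,\infty)$ with $(\tilde{\la}^{-1})'(0) = 1$, the function $\tilde{\la}$ is smooth with $\tilde{\la}(0) = 0$ and $\tilde{\la}'(0) = 1$, and the standard analysis used for $\psi$ in Lemma \ref{CF} then shows that $\tilde{\psi}$ extends smoothly across $r = 0$.

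To verify the bound on $\widetilde{\Phi}'$, I differentiate the ODE to obtain $\tilde{\psi}''(r) = \bigl(\tilde{\la}(\tilde{\psi}(r))/r^2\bigr)\bigl(\tilde{\la}'(\tilde{\psi}(r)) - 1\bigr)$, whence
$$\f{\widetilde{\Phi}'(r)}{2} = \f{\tilde{\psi}''(r)}{\tilde{\psi}'(r)} = \f{\tilde{\la}'(\tilde{\psi}(r)) - 1}{r}.$$
The discussion immediately preceding Theorem \ref{PS} already establishes $\k < \tilde{\la}'(s) \le 1$ for every $s \ge 0$, so $-\f{2(1-\k)}{r} < \widetilde{\Phi}'(r) \le 0$ follows at once --- exactly the hypothesis required by Theorems \ref{LF} and \ref{Existence}. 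Invoking Theorem \ref{Existence} in ambient dimension $n$ then yields that the hyperplane $\{x_n = 0\}$ is area-minimizing in $\widetilde{\Si}$, and the $O(n)$-rotational symmetry of $\tilde{\si}$ upgrades this to every hyperplane through the origin.

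The only delicate technical step I anticipate is the smoothness of $\tilde{\psi}$ across the singular endpoint $r = 0$ of the ODE. The cleanest way to handle it is to write $\tilde{\psi}(r) = r\, h(r)$ with $h(0) = 1$, converting the singular ODE into the regular equation $r h'(r) = h(r)\bigl(f(r h(r)) - 1\bigr)$, where $f(\r) := \tilde{\la}(\r)/\r$ extends smoothly by $f(0) = 1$ thanks to $\tilde{\la}(0)=0$, $\tilde{\la}'(0)=1$. Standard smooth dependence for such regular singular equations --- applied in identical fashion in Lemma \ref{CF} --- produces a smooth solution, after which the remainder of the proof reduces to a straightforward citation of Theorem \ref{Existence}.
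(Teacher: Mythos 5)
Your proposal reproduces the paper's argument: conformally flatten $\tilde{\si}$ via the radial ODE $\tilde{\psi}'(r)=\tilde{\la}(\tilde{\psi}(r))/r$ as in equations \eqref{ss1}--\eqref{ss3} of Lemma \ref{CF}, use $\k<\tilde{\la}'\le1$ to get $-\f{2(1-\k)}{r}<\widetilde{\Phi}'\le0$, and invoke Theorem \ref{Existence} with ambient dimension $n$ in place of $n+1$. This is precisely the paper's (terse) proof, and your dimensional bookkeeping ($n-1\ge3$, threshold $\f{2}{n-1}\sqrt{n-2}$) and handling of the singular endpoint $r=0$ are both correct.
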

\begin{proof}
Note $\k<\tilde{\la}'\le1$, then we can rewrite the metric $\tilde{\si}$
similar to \eqref{ss1}\eqref{ss2}\eqref{ss3}. Apply Theorem
\ref{Existence} to complete the proof.
\end{proof}
\begin{remark}
Our theorem above gives an example for the question (2) in \cite{An}, which is

If $N$ is a complete manifold of positive sectional curvature, does $N$ ever admit an area-minimizing hypersurface?
\end{remark}

Now scaling the manifold $MCS_\k$ yields $\ep^2MCS_\k$ for $\ep>0$, which is $\R^{n+1}$  endowed with the metric
\begin{equation}\aligned
\si_\ep=d\r^2+\ep^2\lambda^2\left(\f{\r}{\ep}\right)d\th^2
\endaligned
\end{equation}
in polar coordinates, where  $\la$ and $d\th^2$  as in \eqref{Sisi} and \eqref{lar}. Obviously $\ep\lambda\left(\f{\r}{\ep}\right)\ge\k\r$ and $\ep\lambda\left(\f{\r}{\ep}\right)$ converges to $\k\r$ uniformly as $\ep\rightarrow0$. Hence $\si_\ep$ converges to $\si_C$ as $\ep\rightarrow0$ (uniformly away from the origin), where $\si_C$ is the metric of $CS_\k$ defined in \eqref{siC}.

Now we can derive the result of F. Morgan in \cite{Mf}, obtained there
by a different method due to G. R. Lawlor \cite{L}.
\begin{proposition}\label{CSkmin}
Let $n\ge3$ and $\k\ge\f{2}{n}\sqrt{n-1}$. Then
any hyperplane in $(n+1)$-dimensional $CS_\k$ through the origin is area-minimizing.
\end{proposition}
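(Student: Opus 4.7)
The plan is to realize $CS_\k$ as the $\ep\to 0$ limit of the smooth rescaled capped cones $\ep^2 MCS_\k$ and to transfer the area-minimizing property of a coordinate hyperplane through this limit.

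Being area-minimizing is invariant under a global rescaling of the ambient metric by a positive constant, so Theorem \ref{MCSK} (whose hypotheses on $n$ and $\k$ are exactly those assumed here) gives that the hyperplane $T=\{x_{n+1}=0\}\subset\R^{n+1}$ is area-minimizing in $(\R^{n+1},\si_\ep)$ for every $\ep>0$. From \eqref{lar}, for $\r\ge\ep\r_0$ we have $\ep\la(\r/\ep)=\k\r+\ep(1-\k\r_0)$, while for $0\le\r\le\ep\r_0$ we have $\ep\la(\r/\ep)=\ep\z(\r/\ep)$; in either case $|\ep\la(\r/\ep)-\k\r|=O(\ep)$ uniformly on any bounded $\r$-interval. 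Hence $\si_\ep\to\si_C$ in $C^0$ on every compact subset of $\R^{n+1}$ and smoothly on every compact subset away from the vertex $o$.

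Now fix $R>0$ and let $W\subset\overline{B}_R$ be any competitor with $\partial W=\partial(T\cap\overline{B}_R)$. The minimality of $T$ with respect to $\si_\ep$ gives
\begin{equation*}
\mathrm{Vol}_{\si_\ep}(T\cap\overline{B}_R)\le\mathrm{Vol}_{\si_\ep}(W).
\end{equation*}
The induced $\si_\ep$-volume densities on $T\cap\overline{B}_R$ and on $W$ are uniformly bounded on $\overline{B}_R$ for $\ep\in(0,1]$ and converge pointwise (almost everywhere) to their $\si_C$-counterparts, so the dominated convergence theorem lets me send $\ep\to 0$ to obtain $\mathrm{Vol}_{\si_C}(T\cap\overline{B}_R)\le\mathrm{Vol}_{\si_C}(W)$. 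Since $R$ was arbitrary, $T$ is area-minimizing in $CS_\k$. Every other hyperplane through the origin is the image of $T$ under an orthogonal rotation, which is an isometry of $\si_C$, so the same conclusion holds.

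The delicate point is the behavior near the vertex, where $\si_C$ degenerates. However, the cap region of $\si_\ep$ has coordinate radius $\ep\r_0\to 0$ and its total contribution to the volume of any fixed ball $\overline{B}_R$ is $o(1)$ independently of the competitor. Combined with the uniform boundedness of the $\si_\ep$-volume densities on each fixed compact set, this is precisely what makes the dominated-convergence step legitimate and the limit argument rigorous.
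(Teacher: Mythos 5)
Your proposal follows the paper's own strategy: approximate $CS_\k$ by the rescaled capped cones $\ep^2MCS_\k$, invoke Theorem \ref{MCSK} (via rescaling invariance) to get $T$ area-minimizing in each $\si_\ep$, and pass to the limit $\ep\to0$. The only difference from the paper is how the limit is justified: the paper asserts a one-sided comparison of metrics so that $\mathcal{H}^n_\ep(W_\ep)\le\mathcal{H}^n_0(W_0)$ for the competitor, plus convergence on the hyperplane side, whereas you want to run dominated convergence on both sides. This is a minor variation of the same idea, not a genuinely different route.

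There is, however, a concrete gap in your dominated-convergence step. The claim that ``the induced $\si_\ep$-volume densities on $T\cap\overline{B}_R$ and on $W$ are uniformly bounded on $\overline{B}_R$ for $\ep\in(0,1]$'' is false near the vertex. In Euclidean coordinates the target metric is $\si_C=\k^2 r^{2\k-2}\sum dx_i^2$, whose conformal factor diverges as $r\to0$ when $\k<1$; correspondingly the rescaled conformal factor $e^{\Phi_\ep}$ of $\si_\ep$ at the origin is of order $\ep^{2-2/\k}\to+\infty$, so the $n$-dimensional area density $e^{n\Phi_\ep/2}$ is not bounded by any constant independent of $\ep$. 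What is true, and what actually makes the dominated convergence legitimate, is the pointwise comparison $e^{\Phi_\ep(r)}\le\k^2 r^{2\k-2}$ for all $r>0$ (this follows from $\Phi'\ge-2(1-\k)/r$ in Lemma \ref{CF} together with the matching of $\Phi$ with the cone's conformal factor at $r=(1/\k)^{1/\k}$, and is preserved under the rescaling). Thus the $\si_C$-area density is the correct $L^1$ majorant on $W$, provided one first observes that the case $\mathrm{Vol}_{\si_C}(W)=\infty$ is trivial and restricts to $\mathrm{Vol}_{\si_C}(W)<\infty$. Your closing remark that the cap's contribution is ``$o(1)$ independently of the competitor'' is also not established as stated: a wild competitor can in principle concentrate arbitrarily large $\si_\ep$-area in any fixed annulus around $o$; the way to control the shrinking cap region $\{r\lesssim\ep^{1/\k}\}$ is again via the domination by the $\si_C$-density and the absolute continuity of the finite measure $\mathrm{Vol}_{\si_C}\llcorner W$. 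With these corrections your argument becomes the same as the paper's, which relies on the same conformal-factor domination (there phrased as a comparison of the warp factors of $\si_\ep$ and $\si_C$).
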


\begin{proof}
Let $T_\ep$ denote the  hyperplane in $\ep^2MCS_\k$ corresponding to  $T\subset MCS_\k$ during the re-scaling procedure. Denote $T_0=\lim_{\ep\rightarrow0}T_\ep\subset \lim_{\ep\rightarrow0}\ep^2MCS_\k=CS_\k$ in the sense of Hausdorff distance. 

Now we consider a bounded domain $\Om_0\subset T_0$ and a subset $W_0\subset CS_\k$ with $\p\Om_0=\p W_0$. View $\Om_0$ as a set $\Om\subset\R^{n}$ with the induced metric from $T_0$, and $W_0$ as a set $W$ in $\R^{n+1}$ with the induced metric from $CS_\k$. Let $\Om_\ep$ be the set $\Om\subset\R^{n}$ with the induced metric from $T_\ep$, and $W_\ep$ be the set $W$ in $\R^{n+1}$ with the induced metric from $\ep^2MCS_\k$. Clearly, $\p\Om_\ep=\p W_\ep$, $\Om_0=\lim_{\ep\rightarrow0}\Om_\ep$ and $W_0=\lim_{\ep\rightarrow0}W_\ep$ in the sense of Hausdorff distance.

Let $\mathcal{H}^n(K)$ denote $n$-dimensional Hausdorff measure of $K$ for any set $K\subset CS_\k$, and $\mathcal{H}^n_\ep(K')$ denote $n$-dimensional Hausdorff measure of $K'$ for any set $K'\subset \ep^2MCS_\k$.
Note that $\ep\lambda\left(\f{\r}{\ep}\right)\rightarrow\k\r$ uniformly as $\ep\rightarrow0$, we have
\begin{equation}\aligned\label{HnOm0W0}
\mathcal{H}^n(\Om_0)=\lim_{\ep\rightarrow0}\mathcal{H}_\ep^n(\Om_\ep),\qquad\limsup_{\ep\rightarrow0}\mathcal{H}_\ep^n(W_\ep)\le \mathcal{H}^n(W_0).
\endaligned
\end{equation}
Since $T_\ep$ is area-minimizing in $\ep^2MCS_\k$, then
\begin{equation}\aligned\label{HnOmepWep}
\mathcal{H}_\ep^n(\Om_\ep)\le \mathcal{H}_\ep^n(W_\ep).
\endaligned
\end{equation}
Combining \eqref{HnOm0W0} and \eqref{HnOmepWep}, we obtain
$$\mathcal{H}^n(\Om_0)=\lim_{\ep\rightarrow0}\mathcal{H}_\ep^n(\Om_\ep)\le\limsup_{\ep\rightarrow0}\mathcal{H}_\ep^n(W_\ep)\le \mathcal{H}^n(W_0).$$
Hence $T_0$ is an area-minimizing hypersurface in $CS_\k$.
\end{proof}

Actually, here the number $\f{2}{n}\sqrt{n-1}$ is optimal. Namely, if
$\k<\f{2}{n}\sqrt{n-1}$ then every hyperplane in $CS_\k$ is no more
area-minimizing and even not stable. This also has been proved in
\cite{Mf}. Let us show this fact by using  the second variation
formula for the volume functional.

\begin{theorem}\label{PMC}
Let $\k\in(0,1]$ and $n\ge3$.
Any hyperplane in $(n+1)$-dimensional $CS_\k$ through the origin is area-minimizing if and only if
\begin{equation}\aligned\label{condk}
\k\ge\f{2}{n}\sqrt{n-1}.
\endaligned
\end{equation}
\end{theorem}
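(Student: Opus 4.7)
The forward implication ``$\k\ge\f{2}{n}\sqrt{n-1}$ implies area-minimizing'' is exactly Proposition \ref{CSkmin}, so the only task left is the converse: if $\k<\f{2}{n}\sqrt{n-1}$, then every hyperplane through the origin in $CS_\k$ is in fact already \emph{unstable}, hence a fortiori not area-minimizing. The plan is to evaluate the index form \eqref{Indexphi} on such a hyperplane with a radial Hardy-extremal test function and to produce a strictly negative value.

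By the rotational symmetry of $CS_\k$ the hyperplane $T$ may be taken to be the cone $CY$ over the equator $Y=\{x_{n+1}=0\}\cap S_\k$. Because this equator is a totally geodesic $(n-1)$-sphere in $S_\k$ one has $|B|^2\equiv 0$, and for a purely radial function $\phi=\phi(\r)$ supported in a compact interval $[a,b]\subset(0,\infty)$ the term $\De_Y\phi$ also vanishes. The three remaining $\r$-derivative terms in \eqref{Indexphi} then pair up through a single integration by parts in $\r$: the mixed $\phi\phi'\r^{n-2}$ contributions cancel and \eqref{Indexphi} collapses to the one-dimensional Hardy form
\begin{equation*}
I(\phi,\phi)=\mathrm{vol}(Y)\int_a^b\left((\phi')^2-\f{c}{\r^2}\phi^2\right)\r^{n-1}\,d\r,\qquad c:=(n-1)\left(\f{1}{\k^2}-1\right).
\end{equation*}

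A direct algebraic rearrangement shows that the hypothesis $\k<\f{2}{n}\sqrt{n-1}$ is exactly equivalent to $c>\f{(n-2)^2}{4}$, so the Hardy coefficient strictly exceeds the sharp constant associated to the weight $\r^{n-1}d\r$. The natural exploitation is to plug in the near-extremal $\phi_\de(\r)=\r^{-(n-2)/2}\eta_\de(\r)$, where $\eta_\de$ is a smooth cutoff equal to $1$ on $[2\de,1/2]$ and supported in $[\de,1]$. Expanding the integrand produces three contributions: a plateau term of the form $\bigl(\f{(n-2)^2}{4}-c\bigr)\int\r^{-1}\eta_\de^2\,d\r$ which diverges at rate $\log(1/\de)$, a cross term $-(n-2)\eta_\de\eta_\de'$ whose integral vanishes by the fundamental theorem of calculus, and a cutoff error $\int\r(\eta_\de')^2\,d\r$ which is uniformly $O(1)$ for the standard linear-transition choice of $\eta_\de$. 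Since $\f{(n-2)^2}{4}-c<0$ by assumption, this forces $I(\phi_\de,\phi_\de)\to-\infty$ as $\de\to 0$.

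The only point needing care is the routine bookkeeping on the two transition annuli $[\de,2\de]$ and $[1/2,1]$, but this is entirely classical and will not be a genuine obstacle. The existence of a single $\phi_\de$ with $I(\phi_\de,\phi_\de)<0$ shows that $CY$ is not stable, hence not area-minimizing, which combined with Proposition \ref{CSkmin} establishes the sharp equivalence \eqref{condk}.
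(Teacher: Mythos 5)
Your proposal is correct and is essentially the paper's proof: both reduce to the radial stability inequality on the cone, integrate by parts to arrive at the weighted Hardy quotient with sharp constant $\f{(n-2)^2}{4}$, and observe that $\k<\f{2}{n}\sqrt{n-1}$ is exactly $c>(n-2)^2/4$. The paper packages the last step as an explicit eigenvalue computation for $L=\r^2\p_\r^2+(n-1)\r\p_\r$ under the change of variable $s=\log\r$ (eigenfunctions $\r^{(2-n)/2}\sin\!\left(\f{k\pi\log\r}{\log\ep}\right)$), whereas you insert the near-extremal test function $\r^{-(n-2)/2}\eta_\de$ directly; these are the same test functions in slightly different clothing.
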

\begin{proof}
By Proposition \ref{CSkmin} we only need to prove that if \eqref{condk} fails to hold, any hyperplane in $CS_\k$ through the origin is not area-minimizing.
Let $X$ be a totally geodesic sphere in $S_\k$, then $X$ is  minimal in $S_\k$ and $P\triangleq CX$ is a hyperplane in $CS_\k$ through the origin. Clearly, $P$ is a minimal hypersurface in $CS_\k$ because it is totally geodesic. 
The second variation formula is (see \eqref{Indexphi})
\begin{equation}\aligned\label{Indexphi1}
I(\phi,\phi)=\int_\ep^1\bigg(\int_X\bigg(&-\De_X\phi-\f{n-1}{\k^2}\phi+(n-1)\phi\\
&-(n-1)\r\f{\p\phi}{\p\r}-\r^2\f{\p^2\phi}{\p \r^2}\bigg)\phi\ d\mu_X\bigg)\r^{n-3}d\r,
\endaligned
\end{equation}
where $\phi(x,t)\in C^2(X\times_{\r}\R)$.
Define a second order differential operator $L$ by
$$L=\r^2\f{\p^2}{\p \r^2}+(n-1)\r\f{\p}{\p\r}.$$
If $s=\log\r$, then
$$L=\f{\p^2}{\p s^2}+(n-2)\f{\p}{\p s}=e^{-\f{n-2}2s}\f{\p^2}{\p s^2}\left(e^{\f{n-2}2s}\cdot\right)-\f{(n-2)^2}4.$$
So the $k(k\ge1)$-th eigenvalue of $L$ on $[\ep,1]$ is
\begin{equation}\aligned\label{eigenL2}
\f{(n-2)^2}4+\left(\f{k\pi}{\log\ep}\right)^2
\endaligned
\end{equation}
with the $k$-th eigenfunction (see  \cite{Si} or \cite{X} for instance)
$$\r^{\f{2-n}2}\sin\left(\f{k\pi}{\log\ep}\log\r\right).$$

By the second variation formula \eqref{Indexphi1}, $P$ is stable if and only if
$$-\f{n-1}{\k^2}+n-1+\f{(n-2)^2}4\ge0,$$
i.e.,
$$\k\ge\f{2}{n}\sqrt{n-1}.$$

\end{proof}

\section{A class of  manifolds with non-negative Ricci curvature}

Let $N$ be an $(n+1)$-dimensional complete non-compact Riemannian manifold satisfying the following three conditions:

C1) Nonnegative Ricci curvature: Ric $\ge 0$;

C2) Euclidean volume growth:
\begin{equation*}
V_N\triangleq\lim_{r\rightarrow\infty}\f{Vol\big(B_r(x)\big)}{r^{n+1}}>0;
\end{equation*}

C3) Quadratic  decay of the curvature tensor: for sufficiently large $\r=d(x,p)$, the distance from a fixed point in $N$,
\begin{equation*}
|R(x)|\le\f c{\r^{2}(x)}.
\end{equation*}

By Gromov's compactness theorem \cite{GLP}, for any sequence $\bar{\ep}_i\rightarrow0$
there is a subsequence $\{\ep_i\}$ converging to zero such that  $\ep_i N=(N,\ep_i \bar{g},p)$ converges to a metric space $(N_{\infty},d_{\infty})$ with vertex $o$ in the pointed Gromov-Hausdorff sense. It is called the tangent cone at infinity.
$N_{\infty}\setminus\{o\}$ is a smooth manifold with $C^{1,\a}$
Riemannian metric $\bar{g}_{\infty}(0<\a<1)$ which is compatible with
the distance $d_{\infty}$. The precise statements were derived in
\cite{GW} and \cite{Ps} on the basis of the harmonic coordinate
constructions of \cite{JK}. In fact, $N_{\infty}\setminus\{o\}$ is a $D^{1,1}$-Riemannian manifold (see \cite{GW,Ps}). For any compact domain $K\subset N_{\infty}\setminus\{o\}$, there exists a diffeomorphism $\Phi_i:\ K\rightarrow \Phi_i(K)\subset\ep_iN$ such that $\Phi_i^*(\ep_i \bar{g})$ converges as $i\rightarrow\infty$ to $\bar{g}_{\infty}$ in the $C^{1,\a}$-topology on $K$.

Cheeger-Colding  (see Theorem 7.6 in \cite{ChC}) proved that under the conditions C1) and C2) the  cone $N_{\infty}$ is a metric cone, namely  $N_{\infty}=CX=\R^+\times_{\r}X$ for some $n$ dimensional smooth compact manifold $X$ with Diam $X\le\pi$  and the metric
$$\bar{g}_{\infty}=d\r^2+\r^2s_{ij}d\th_id\th_j$$
where $s_{ij}d\th_id\th_j$ is the metric of $X$ and $s_{ij}\in C^{1,\a}(X)$.
Let $\r_i$ be the distance function from $p$ to the considered point in $\ep_iN$.
Set $B^i_r(x)$ be the geodesic ball with radius $r$ and centered at
$x$ in $(N,\ep_i\bar{g})$, and $\mathcal{B}_r(x)$ be the geodesic ball
with radius $r$ and centered at $x$ in $N_{\infty}$. In particular, $X=\p \mathcal{B}_1(o)$.

Mok-Siu-Yau \cite{MSY} showed that if C1) and C2) hold, then
there exists the Green function $G(p,\cdot)$ on $N^{n+1}$ with $\lim_{r\rightarrow\infty}\sup_{\p B_r(p)}\left|Gr^{n-1}-1\right|=0$ and
\begin{equation}\label{Grn-1}
r^{1-n}\le G(p,x)\le C r^{1-n}
\end{equation}
for any $n\ge2$, $x\in\p B_r(p)$ and some constant $C$.
Set $\mathcal{R}=G^{\f1{1-n}}$, then
\begin{equation}\aligned\label{Deb}
\De_{N}\mR^2=2(n+1)|\bn \mR|^2.
\endaligned
\end{equation}

Under the additional condition C3),
Colding-Minicozzi  (see Corollary 4.11 in \cite{CM1}) showed that
\begin{equation}\aligned\label{Rr}
\limsup_{r\rightarrow\infty}\left(\sup_{\p B_r}\left|\f {\mathcal{R}}r-1\right|+\sup_{\p B_r}\Big|\left|\overline{\na} {\mathcal{R}}\right|-1\Big|\right)=0,
\endaligned
\end{equation}
and
\begin{equation}\aligned\label{HessR2}
\limsup_{r\rightarrow\infty}\left(\sup_{\p B_r}\left|\mathrm{Hess}_{\mR^2}-2\bar{g}\right|\right)=0,
\endaligned
\end{equation}
where $\mathrm{Hess}_{\mR^2}$ is the Hessian matrix of $\mR^2$ in $N$.
In particular, $|\overline{\na}\mathcal{R}|\le C(n,V_N)$ which is a constant depending only on $n,V_N$.

For any $f\in C^1(\p\mathcal{B}_1)$, we can extend $f$ to $N_{\infty}\setminus\{o\}$ by defining
$$f(\r_\infty\th)=f(\th)$$
for any $\r_\infty>0$ and $\th\in\p\mathcal{B}_1$. Let $\widetilde{\na}$ be the Levi-Civita connection of $N_\infty$, then
\begin{equation}\aligned\label{wnafpr}
\left\lan\widetilde{\na}f,\f{\p}{\p \r_\infty}\right\ran=0.
\endaligned
\end{equation}
For any $K_2>K_1>0$ and $\ep>0$, let $\Phi_i:
\overline{\mathcal{B}_{2K_2}}\setminus \mathcal{B}_{\f\ep2
  K_1}\rightarrow \Phi_i(\overline{\mathcal{B}_{2K_2}}\setminus
\mathcal{B}_{\f\ep2 K_1})\subset\ep_iN$ be a diffeomorphism such that
$\Phi_i^*(\ep_i \bar{g})$ converges as $i\rightarrow\infty$ to
$\bar{g}_{\infty}$ in the $C^{1,\a}$-topology on
$\overline{\mathcal{B}_{2K_2}}\setminus \mathcal{B}_{\f\ep2
  K_1}$. Moreover, $\Phi_i$ is $C^{2,\a}$-bounded relative to harmonic
coordinates with a bound independent of $i$ (see \cite{JK}).

Let $\bn^i$, $\De_N^i$, $\mathrm{Hess}^i$, $|\cdot|_i$, $\text{Ric}_{\ep_iN}$ and $R_{\ep_iN}$ be the Levi-Civita connection, Laplacian operator, Hessian matrix, the norm, Ricci curvature and curvature tensor of $\ep_iN$, respectively, then on $\ep_iN$ we have the relations
$$\aligned\r_i=&\ep_i^{\f12}\r,\qquad \bn^i=\bn,\qquad \De_N^i=\ep_i^{-1}\De_N,\qquad \mathrm{Hess}^i=\mathrm{Hess},\\
\text{Ric}_{\ep_iN}&=\ep_i^{-1}\text{Ric}, \qquad |R_{\ep_iN}|_i=\ep_i^{-1}|R|_i, \qquad d\mu_{\ep_iN}=\ep_i^{\f{n+1}{2}}d\mu_N
\endaligned$$
where $\r_i$ and $d\mu_{\ep_iN}$ are the distance function and volume element on $\ep_iN$, respectively, and $d\mu_N$ is the volume element on $N$. Let $\lan\cdot,\cdot\ran_i$ be the inner product corresponding to $|\cdot|_i$. We see that conditions C1), C2)  and C3) are all scaling
invariant.
Let
$$\widetilde{\mathcal{R}}_i=\sqrt{\ep_i}\mathcal{R}\qquad \mathrm{on}\ \ \ep_iN,$$
then
$$\De_{N}^i\widetilde{\mR}_i^2=\De_{N}\mR^2=2(n+1)|\bn\mR|^2=2(n+1)|\bn^i \widetilde{\mR}_i|_i^2$$
and so
$\widetilde{\mathcal{R}}_i^{1-n}$ is the Green function on $\ep_iN$. 
By \eqref{HessR2} we have
\begin{equation}\aligned\label{HessK21}
\limsup_{i\rightarrow\infty}\left(\sup_{B^i_{K_2}\setminus B^i_{\ep K_1}}\left|\mathrm{Hess}^i_{\widetilde{\mR}_i^2}-2\ep_i\bar{g}\right|_i\right)=0.
\endaligned
\end{equation}

The distance function $\r_i$ is a Lipschitz function with Lipschitz constant 1. 
If $\r_i$ is $C^1$-function at $x\in\ep_i N$, then $\bn^i\r_i(x)=\dot{\g^i_x}(\r_i(x))$, where $\g^i_x$ is a minimal normal geodesic
from $p$ to $x$. For any $y\in\g^i_x$ except $p,x$, the minimal normal geodesic joining $p$ and $y$ must a portion of $\g^i_x$. Because if there is a minimal normal geodesic $l^i_y$ joining $p$ and $y$, $l^i_y\cup\g^i_{x,y}$ is also a minimal normal geodesic, where $\g^i_{x,y}$ is the portion of $\g^i_x$ with boundary $\{x,y\}$. Then $l^i_y\cup\g^i_{x,y}$ is smooth and $\dot{l}^i_y=\dot{\g}^i_{x}(\r_i(y))$. So $l^i_y\subset\g^i_x$ by the uniqueness of geodesic equation, which is a differential equation of order 2. Therefore, we have $\bn^i\r_i(y)=\dot{\g^i_x}(\r_i(y))$.
When $\ep_i=1$, $\bn\r(z)$ corresponds to the normal geodesic
$\dot{\g_z}$ if $\r$ is $C^1$ at $z\in N$. 

Now if $x\in B^i_{K_2}\setminus B^i_{\ep K_1}$ and $\r_i$ is $C^1$-function at $x$, let $x=\g^i_x(t)$, $x_\ep=\g^i_x(t_\ep)\in\p B^i_{\ep K_1}\cap\g^i_x$, then for any parallel vector field $\xi$ along $\g^i_x$, we have
\begin{equation}\aligned
\overline{\na}^i_\xi\widetilde{\mathcal{R}}_i^2(x)-\overline{\na}^i_\xi\widetilde{\mathcal{R}}_i^2(x_\ep)
=\int_{t_\ep}^t\overline{\na}^i_{\dot{\g}^i_x}\overline{\na}^i_\xi\widetilde{\mathcal{R}}_i^2(\g^i_x(s))ds
=\int_{t_\ep}^t\mathrm{Hess}^i_{\widetilde{\mR}_i^2}\left(\bn^i\r_i,\xi\right)(\g^i_x(s))ds.
\endaligned
\end{equation}
Hence
\begin{equation}\aligned
\left|\overline{\na}^i_\xi\widetilde{\mathcal{R}}_i^2(x)-\bn^i_\xi\r_i^2(x)\right|_i
\le&\left|\overline{\na}^i_\xi\widetilde{\mathcal{R}}_i^2(x_\ep)-\bn^i_\xi\r_i^2(x_\ep)\right|_i\\
&+\int_{t_\ep}^t\left|\mathrm{Hess}^i_{\widetilde{\mR}_i^2}\left(\bn^i\r_i,\xi\right)(\g^i_x(s))-
\mathrm{Hess}^i_{\r_i^2}\left(\bn^i\r_i,\xi\right)(\g^i_x(s))\right|ds\\
\le&C\ep+\int_{t_\ep}^t\left|\mathrm{Hess}^i_{\widetilde{\mR}_i^2}\left(\bn^i\r_i,\xi\right)(\g^i_x(s))-
2\left\lan\bn^i\r_i(\g^i_x(s)),\xi\right\ran_i\right|ds\\
\le&C\ep+K_2\sup_{B^i_{K_2}\setminus B^i_{\ep K_1}}\left|\mathrm{Hess}^i_{\widetilde{\mR}_i^2}\left(\bn^i\r_i,\xi\right)-
2\left\lan\bn^i\r_i,\xi\right\ran_i\right|,
\endaligned
\end{equation}
where $C$ depends only on $K_1,K_2$ and the manifold $N$.
With \eqref{HessK21} we obtain
\begin{equation}\aligned
\limsup_{i\rightarrow\infty}\left(\sup_{Q_i\cap B^i_{K_2}\setminus B^i_{\ep K_1}}\left|\overline{\na}^i\widetilde{\mathcal{R}}_i^2(x)-\bn^i\r_i^2(x)\right|_i\right)\le  C\ep,
\endaligned
\end{equation}
where $Q_i$ is the set including all the regular points of $\r_i$ in $\ep_iN$ ($\r_i$ is $C^1$ at such points). Moreover, the $n$-dimensional Hausdorff measure $\mathcal{H}^n(\ep_iN\setminus Q_i)=0$.

Since the geodesics  $\g^i_x$ in $\ep_iN$ converge to a geodesic  in
$N_\infty$, with \eqref{wnafpr} and the continuity of $\overline{\na}^i(f\circ\Phi_i^{-1})$, we have
\begin{equation}\aligned\label{dfdRPhi}
\limsup_{i\rightarrow\infty}\sup_{B^i_{K_2}\setminus B^i_{\ep K_1}}
\left|\left\lan\overline{\na}^i(f\circ\Phi_i^{-1}),\overline{\na}^i\widetilde{\mathcal{R}}_i^2\right\ran_i\right|\le C_1\ep,
\endaligned
\end{equation}
and
\begin{equation}\aligned\label{dfRPhi}
\limsup_{i\rightarrow\infty}\sup_{B^i_{K_2}\setminus B^i_{\ep K_1}}
\left(\widetilde{\mathcal{R}}_i\left|\overline{\na}^i(f\circ\Phi_i^{-1})\right|_i\right)<\infty.
\endaligned
\end{equation}

Let $\Pi_i$ be the
identity map from $N$ to itself, the two copies of the manifold being regarded with respect to different
metrics:\ $\Pi_i:\ (N,\bar{g})\rightarrow\ep_iN=(N,\ep_i\bar{g})$. Now \eqref{dfdRPhi} and \eqref{dfRPhi} are equivalent to
\begin{equation}\aligned\label{dfdR2}
\limsup_{i\rightarrow\infty}\sup_{B_{\f{K_2}{\sqrt{\ep_i}}}\setminus B_{\f{\ep K_1}{\sqrt{\ep_i}}}}
\left|\left\lan\overline{\na}(f\circ\Phi_i^{-1}\circ\Pi_i),\overline{\na}\mathcal{R}^2\right\ran\right|\le C_1\ep,
\endaligned
\end{equation}
and
\begin{equation}\aligned\label{Rdf}
\limsup_{i\rightarrow\infty}\sup_{B_{\f{K_2}{\sqrt{\ep_i}}}\setminus B_{\f{\ep K_1}{\sqrt{\ep_i}}}}
\left(\mathcal{R}\left|\overline{\na}(f\circ\Phi_i^{-1}\circ\Pi_i)\right|\right)<\infty.
\endaligned
\end{equation}

The theory of integral currents in metric spaces was developed by Ambrosio and Kirchheim
in \cite{AK}. It provides a suitable notion of generalized surfaces in metric spaces, which extends the classical Federer-Fleming theory \cite{FF}.
We shall need the compactness Theorem (see Theorem 5.2 in \cite{AK}) and the closure Theorem (see Theorem 8.5 in \cite{AK}) for normal currents in a metric space $E$.
\begin{theorem}\label{ComT}
Let $(T_h)\subset \textbf{N}_k(E)$ be a bounded sequence of normal currents, and assume that for any integer $p\ge1$ there exists a compact set $K_p\subset E$ such that
$$||T_h||(E\setminus K_p)+||\p T_h||(E\setminus K_p)<\f1p\qquad \mathrm{for\ all}\ h\in \N.$$
Then, there exists a subsequence $(T_{h(n)})$ converging to a current $T\in\textbf{N}_k(E)$ satisfying
$$||T||\left(E\setminus \bigcup_{p=1}^\infty K_p\right)+||\p T||\left(E\setminus \bigcup_{p=1}^\infty K_p\right)=0.$$
\end{theorem}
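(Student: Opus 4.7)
The plan is to combine a Cantor diagonal extraction on a countable family of Lipschitz test tuples with the Banach--Alaoglu type compactness provided by the mass measures, then verify that the limit functional inherits the axioms of a normal current in the sense of Ambrosio--Kirchheim.

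First I would exploit the fact that every $T_h \in \textbf{N}_k(E)$ carries an associated finite Radon mass measure $\|T_h\|$ with $\|T_h\|(E) \le M$ (and similarly $\|\partial T_h\|(E) \le M$) for some uniform constant $M$, since the sequence is bounded in $\textbf{N}_k$. The tightness hypothesis, namely $\|T_h\|(E \setminus K_p) + \|\partial T_h\|(E \setminus K_p) < 1/p$, lets one apply the standard weak compactness of tight Radon measures: up to a subsequence (not relabeled), $\|T_h\| \rightharpoonup \mu$ and $\|\partial T_h\| \rightharpoonup \nu$ weakly as Radon measures on $E$, with $\mu,\nu$ supported in $\bigcup_p K_p$ (the concentration claim at the end will follow from this).

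Next I would fix a countable family $\mathcal F$ of $(k+1)$-tuples $(f,\pi_1,\dots,\pi_k)$ of bounded Lipschitz functions that is dense in an appropriate Lipschitz--uniform topology on such tuples restricted to each $K_p$; density can be produced by separability of $K_p$ and a standard McShane extension argument. A Cantor diagonal gives a further subsequence along which $T_h(f,\pi_1,\dots,\pi_k)$ converges for every tuple in $\mathcal F$. The key quantitative bound of Ambrosio--Kirchheim,
\[
|T_h(f,\pi_1,\dots,\pi_k)| \;\le\; \mathrm{Lip}(\pi_1)\cdots\mathrm{Lip}(\pi_k)\int |f|\,d\|T_h\|,
\]
together with the uniform mass bound, provides equicontinuity in $(f,\pi_1,\dots,\pi_k)$; this lets one extend the limit by continuity to a well-defined multilinear functional $T(f,\pi_1,\dots,\pi_k)$ on all admissible $(k+1)$-tuples. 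Multilinearity, the alternating property in $(\pi_1,\dots,\pi_k)$, and the locality axiom pass to the limit because they are preserved under pointwise convergence of a multilinear functional with equicontinuous dependence on the arguments.

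The main obstacle, as usual in the Ambrosio--Kirchheim framework, is the continuity axiom (AK axiom (iii)): one must show that if $\pi_j^n \to \pi_j$ pointwise with uniformly bounded Lipschitz constants, then $T(f,\pi_1^n,\dots,\pi_k^n)\to T(f,\pi_1,\dots,\pi_k)$. Here I would use the above mass estimate together with the weak convergence $\|T_h\| \rightharpoonup \mu$ and dominated convergence with respect to $\mu$. Once this axiom is established, $T$ is a $k$-dimensional metric current with $\|T\|\le \mu$, and $\partial T$ is defined by $\partial T(g,\pi_1,\dots,\pi_{k-1}) := T(1,g,\pi_1,\dots,\pi_{k-1})$; a parallel argument shows $\|\partial T\|\le \nu$, so $T\in \textbf{N}_k(E)$. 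Finally, weak lower semi-continuity of mass gives
\[
\|T\|(E\setminus K_p)+\|\partial T\|(E\setminus K_p)\;\le\;\liminf_{h}\bigl(\|T_h\|(E\setminus K_p)+\|\partial T_h\|(E\setminus K_p)\bigr)\;\le\;\tfrac{1}{p},
\]
and letting $p\to\infty$ yields the stated concentration of $\|T\|+\|\partial T\|$ on $\bigcup_p K_p$.
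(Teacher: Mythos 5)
The paper does not prove this statement itself --- it is quoted verbatim from Ambrosio--Kirchheim (Theorem 5.2 in \cite{AK}) --- so I assess your reconstruction against the method of that reference. Your overall architecture (weak compactness of mass measures from the tightness hypothesis, a countable family of Lipschitz tuples plus a Cantor diagonal, extension by continuity, verification of the axioms for the limit functional, lower semi-continuity of mass for the concentration claim) is the right skeleton and matches the strategy of \cite{AK}.

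However, there is a genuine gap in the extension step and in your continuity-axiom check. The estimate you invoke,
\[
|T_h(f,\pi_1,\dots,\pi_k)| \le \mathrm{Lip}(\pi_1)\cdots\mathrm{Lip}(\pi_k)\int |f|\,d\|T_h\|,
\]
controls variation in $f$ only. Writing $T_h(f,\pi)-T_h(f,\pi')=\sum_i T_h(f,\pi_1,\dots,\pi_i-\pi'_i,\dots,\pi'_k)$ and applying this bound yields terms proportional to $\mathrm{Lip}(\pi_i-\pi'_i)$, which does \emph{not} become small when $\pi'_i$ is merely uniformly close to $\pi_i$ with comparable Lipschitz constant: the Lipschitz constant of the difference stays of order one. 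And a countable family of $1$-Lipschitz functions can be dense only in the compact-open topology, not in the Lipschitz norm, so you cannot repair this by changing the dense class. Consequently you have no equicontinuity in the $\pi$-variables from the mass estimate alone, so neither the extension from $\mathcal F$ nor the verification of AK's continuity axiom is actually closed. The missing ingredient is the joint continuity estimate for \emph{normal} currents (Proposition 5.1 in \cite{AK}), of the form
\[
|T(f,\pi)-T(f,\pi')| \le \sum_i\int |f|\,|\pi_i-\pi'_i|\,d\|\partial T\|+\mathrm{Lip}(f)\sum_i\int|\pi_i-\pi'_i|\,d\|T\|
\]
(for $\mathrm{Lip}(\pi_i),\mathrm{Lip}(\pi'_i)\le1$), whose right-hand side is controlled by the $L^1(\|T\|+\|\partial T\|)$-distance of $\pi$ to $\pi'$ rather than by the Lipschitz constant of the difference. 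This is precisely why the theorem is stated for normal currents and why the hypothesis bounds $\|\partial T_h\|$ as well as $\|T_h\|$: equi-bounded boundary mass is indispensable for passing the continuity axiom to the limit, and your argument does not close without it.
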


\begin{theorem}\label{CloT}
Let $\mathcal{I}_k(E)$ be the class of integer-rectifiable currents in
$E$.
Let $(T_h)\subset \textbf{N}_k(E)$ be a sequence weakly converging to $T\in \textbf{N}_k(E)$.
Then, the conditions
$$T_h\in \mathcal{I}_k(E),\qquad \sup_{h\in\N}\textbf{N}(T_h)<\infty$$
imply $T\in \mathcal{I}_k(E)$.
\end{theorem}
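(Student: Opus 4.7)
The plan is to prove this closure theorem by induction on the dimension $k$, using the Ambrosio-Kirchheim slicing machinery to reduce it to a $0$-dimensional statement. As a preliminary step, one notes that the mass functional $\textbf{M}$ and boundary mass $T\mapsto\textbf{M}(\partial T)$ are lower semicontinuous under weak convergence of currents, so $T$ automatically satisfies $\textbf{N}(T)\le\liminf_h\textbf{N}(T_h)<\infty$. This is what makes the slicing theorem applicable both to the sequence and to the candidate limit.

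For the inductive step ($k\ge 1$), I would slice by an arbitrary $1$-Lipschitz function $\pi\colon E\to\R$. The slicing theorem yields $\langle T_h,\pi,x\rangle,\langle T,\pi,x\rangle\in\textbf{N}_{k-1}(E)$ for a.e. $x\in\R$, with the coarea-type bound $\int_\R\textbf{M}(\langle T,\pi,x\rangle)\,dx\le\textbf{M}(T)$ and an analogous bound for the boundary slices. A Fubini-type argument applied to the weak convergence $T_h\rightharpoonup T$, combined with the uniform normal-mass bound, gives (after passing to a subsequence) both weak convergence $\langle T_h,\pi,x\rangle\rightharpoonup\langle T,\pi,x\rangle$ and a uniform $x$-pointwise bound $\sup_h\textbf{N}(\langle T_h,\pi,x\rangle)<\infty$ for a.e. $x$. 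Since integrality is preserved by slicing, $\langle T_h,\pi,x\rangle\in\mathcal{I}_{k-1}(E)$ for a.e. $x$, and the inductive hypothesis then yields $\langle T,\pi,x\rangle\in\mathcal{I}_{k-1}(E)$ for a.e. $x\in\R$.

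The base case $k=0$ is where the main obstacle lies: one must show that a weakly convergent sequence of integer-valued atomic currents $T_h=\sum_j\theta_j^h\de_{x_j^h}$, with $\theta_j^h\in\Z$ and $\sup_h\sum_j|\theta_j^h|<\infty$, has an atomic integer limit. The uniform mass bound forces the supports to concentrate on an at most countable set; testing against bounded Lipschitz functions separating points then shows that the limit weight at each accumulation point is an integer, because any finite sum of integer weights is an integer and cancellations between opposite-sign atoms can only produce integer combinations in the limit. Ruling out a diffuse part of the limit requires using the $\Z$-valued rigidity together with the fact that mass cannot be spread out without being lost.

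With the $0$-dimensional case in hand, the final step is to invoke the Ambrosio-Kirchheim characterization of integer rectifiability via slices: $T\in\textbf{N}_k(E)$ lies in $\mathcal{I}_k(E)$ if and only if, for a sufficiently rich family of $1$-Lipschitz projections $\pi\colon E\to\R^k$ — for instance, those obtained by composing the Kuratowski embedding $E\hookrightarrow\ell^\infty$ with finite-dimensional coordinate functionals — almost every $0$-dimensional slice $\langle T,\pi,x\rangle$ is integer-rectifiable. Iterating the inductive slicing argument down to dimension $0$ produces exactly the integrality of these slices, and the characterization then delivers $T\in\mathcal{I}_k(E)$. The delicate ingredient throughout is that metric-space currents lack coordinate directions, so the choice of test family $\pi$ must be done via embedding arguments rather than by linear projection as in the Federer-Fleming Euclidean setting.
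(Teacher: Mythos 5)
The paper does not prove Theorem~\ref{CloT}: it is invoked verbatim from Ambrosio--Kirchheim, cited as Theorem~8.5 in \cite{AK}, together with the compactness theorem (their Theorem~5.2). So there is no in-paper proof to compare against; the relevant benchmark is the argument in \cite{AK} itself.

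Your sketch does reproduce the broad outline of the Ambrosio--Kirchheim argument (lower semicontinuity of $\textbf{N}$, iterated slicing by $1$-Lipschitz maps, reduction to $0$-currents, the slice criterion for integer rectifiability with projections obtained from an isometric embedding into $\ell^\infty$). However, two of the steps are stated at a level of generality that hides the genuine difficulty, and as written they would not survive scrutiny. First, the passage ``a Fubini-type argument applied to the weak convergence $T_h\rightharpoonup T$ gives weak convergence $\langle T_h,\pi,x\rangle\rightharpoonup\langle T,\pi,x\rangle$ for a.e.\ $x$'' is not a Fubini argument: weak convergence of a sequence of currents does not, by itself, propagate to a.e.\ slices. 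What is true is that the uniform bound $\sup_h\textbf{N}(T_h)<\infty$ lets one extract a subsequence along which the $\R$-valued measures $x\mapsto\textbf{N}(\langle T_h,\pi,x\rangle)$ converge weakly, and then combine this with the compactness theorem plus a diagonal argument to recover, for a.e.\ $x$, a further subsequence whose slices converge to some integer-rectifiable limit; one must then separately identify that limit with $\langle T,\pi,x\rangle$, which requires the BV-type machinery that \cite{AK} sets up in Sections~5, 7 and 8. Second, in the base case $k=0$ the claim that ``cancellations between opposite-sign atoms can only produce integer combinations in the limit'' is the correct intuition but is not an argument: it relies on the quantization $|\theta_j^h|\ge 1$, which bounds the number of atoms uniformly in $h$, and one must check that no mass escapes or diffuses (this is where the assumed weak convergence to a current $T\in\textbf{N}_0(E)$, hence of finite mass, is used). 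These points are precisely the technical content of \cite{AK}, so while your strategy is the right one, the sketch should not be read as a self-contained proof; it would need to be fleshed out exactly along the lines of Theorems~8.5 and~8.8 in \cite{AK}.
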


Now let $M$ denote a minimal hypersurface in $N$ with the induced metric $g$ from $N$.
Since $N$ has nonnegative Ricci curvature, then $Vol(\p
B_r)\le\omega_nr^n$, where $\omega_n$ is the volume of the $n$-dimensional unit sphere in $\R^{n+1}$.
Suppose that $M$ has \emph{Euclidean volume growth at most}, namely,
\begin{equation}\label{EVGM}
\limsup_{r\rightarrow\infty}\left(r^{-n}\int_{M\cap B_r}1d\mu\right)<+\infty,
\end{equation}
where $d\mu$ is the volume element of $M$. Hence there is a smallest positive constant $V_M$ such that
$$\int_{M\cap B_r}1d\mu\le V_M\, r^n\qquad \ \mathrm{for\ any}\ r>0.$$

Denote $\ep_iM=(M,\ep_i g)$.
For any fixed $r>1$ let $\Phi_i: \overline{\mathcal{B}_{2r}}\setminus \mathcal{B}_{\f1{2r}}\rightarrow \Phi_i(\overline{\mathcal{B}_{2r}}\setminus \mathcal{B}_{\f1{2r}})\subset\ep_iN$ be a diffeomorphism such that $\Phi_i^*(\ep_i \bar{g})$ converges as $i\rightarrow\infty$ to $\bar{g}_{\infty}$ in the $C^{1,\a}$-topology on $\overline{\mathcal{B}_{2r}}\setminus \mathcal{B}_{\f1{2r}}$. We see that the minimality is also scaling invariant and $\ep_iM$ are also minimal hypersurfaces of $\ep_iN$. Since
 $$\int_{M\cap B_{2r}}1d\mu=\int_{0}^{2r}Vol\left(M\cap\p B_s\right)ds\le V_M\, 2^nr^n$$
which is scaling invariant, there exists a sequence $l_i\in(r,2r)$ such that $Vol\left(\ep_iM\cap\p B^i_{l_i}\right)+Vol\left(\ep_iM\cap\p B^i_{l_i^{-1}}\right)$ is uniformly bounded for every $i$.

Let $T_i=\ep_iM\cap \left(B^i_{l_i}\setminus B^i_{l_i^{-1}}\right)$, then $\Phi_i^{-1}(T_i)$ is a minimal hypersurface in $\big(N_\infty,\Phi_i^*(\ep_i\bar{g})\big)$ with the unit normal vector $\hat{\nu_i}$. If we change the metric $\Phi_i^*(\ep_i\bar{g})$ to $\bar{g}_{\infty}$, then we get a new metric $\tilde{g}_i$ on the hypersurface $\Phi_i^{-1}(T_i)$ induced from $\big(N_{\infty},\bar{g}_{\infty}\big)$. Set $\widetilde{T}_i=\big(\Phi_i^{-1}(T_i),\tilde{g}_i\big)$, and $\tilde{\nu_i}$ be the unit normal vector to the smooth hypersurface $\widetilde{T}_i$ in the metric space $\big(N_{\infty},\bar{g}_{\infty}\big)$.

$\Phi_i^*(\ep_i\bar{g})\rightarrow\bar{g}_{\infty}$ implies $\lim_{i\rightarrow\infty}\hat{\nu_i}=\lim_{i\rightarrow\infty}\tilde{\nu_i}\triangleq\nu_0$ and these two convergences are both uniform.
Then obviously
$$\mathcal{H}^n(\widetilde{T}_i)+\mathcal{H}^{n-1}(\p \widetilde{T}_i)$$
is uniformly bounded.
By Theorem \ref{ComT} and \ref{CloT} (see also \cite{S} for
compactness of currents in the Euclidean case), there is a subsequence of $\ep_{i_j}$ such that
\begin{equation}\aligned
\widetilde{T}_{i_j}\rightharpoonup T\qquad \mathrm{as}\ j\rightarrow\infty,
\endaligned
\end{equation}
where $T$ is an integer-rectifiable current in $N_{\infty}$. Denote $\widetilde{T}_{i_j}$ by $\widetilde{T}_i$ for simplicity.
Let $\mathcal{X}(\Om)$ be the set containing all smooth differential vector fields with compact support in $\Om$.
For any $\xi\in\mathcal{X}\left(\mathcal{B}_{2r}\setminus \mathcal{B}_{\f1r}\right)$ we have
\begin{equation}\aligned
\lim_{i\rightarrow\infty}\int_{\widetilde{T}_i}\lan\xi,\tilde{\nu_i}\ran d\tilde{\mu_i}=\int_{T}\lan\xi,\nu_\infty\ran d\mu_\infty,
\endaligned
\end{equation}
where $d\tilde{\mu_i}$ and $d\mu_\infty$ are the volume elements of $\widetilde{T}_i$ and $T$, respectively, and $\nu_\infty$ is the unit normal vector of $T$. Since $\hat{\nu_i}\rightarrow\nu_0$ and $\tilde{\nu_i}\rightarrow\nu_0$ uniformly, then we have
\begin{equation}\aligned
\int_{T}\lan\xi,\nu_\infty\ran d\mu_\infty=\lim_{i\rightarrow\infty}\int_{\Phi_i^{-1}(T_i)}\lan\xi,\hat{\nu_i}\ran d\hat{\mu_i}=\lim_{i\rightarrow\infty}\int_{T_i}\lan\xi\circ\Phi_i^{-1},\nu_i\ran_i d\mu_i,
\endaligned
\end{equation}
where $d\hat{\mu_i}$ and $d\mu$ are the volume elements of $\Phi_i^{-1}(T_i)$ and $T_i$, respectively. Then we conclude that
\begin{equation}\aligned\label{PhiMT}
T_i=\ep_{i}M\bigcap B^{i}_{l_{i}}\setminus B^{i}_{l_i^{-1}}\rightharpoonup T\qquad \mathrm{as}\ i\rightarrow\infty.
\endaligned
\end{equation}

\section{Non-existence of area-minimizing hypersurfaces}

Before we can  prove our main results, we still need volume estimates
for minimal hypersurfaces. In fact, these results are interesting in their
own right.
\begin{theorem}\label{Volest}
let $M$ be a complete $n$-dimensional minimal hypersurface  in a complete non-compact
Riemannian manifold $N$ satisfying  conditions C1), C2), C3). Then
\begin{enumerate}
\item every end $E$ of $M$ has infinite volume;
\item if M is a proper immersion, then $M$   has Euclidean volume
growth at least,
\begin{equation}\aligned\label{MEleast}
\liminf_{r\rightarrow\infty}\left(\f1{r^n}\int_{M\cap B_r(p)}1d\mu\right)>0,\qquad for\ any\ p\in N;
\endaligned
\end{equation}
\item If $M$ has  at most Euclidean volume growth, i.e.,
\begin{equation}\label{MEmost}
\limsup_{r\rightarrow\infty}\left(r^{-n}\int_{M\cap B_r}1d\mu\right)<\infty,
\end{equation}
then $M$ is a proper immersion.
\end{enumerate}
\end{theorem}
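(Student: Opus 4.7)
Throughout, I plan to use the asymptotic identity
\[
\Delta_M\mR^2 = 2(n+1)|\bn\mR|^2 - \mathrm{Hess}_{\mR^2}(\nu_M,\nu_M),
\]
obtained from the minimality identity $\Delta_Mf=\Delta_Nf-\mathrm{Hess}_Nf(\nu_M,\nu_M)$ together with \eqref{Deb}; the Colding--Minicozzi estimates \eqref{Rr}--\eqref{HessR2} then yield $\Delta_M\mR^2 \ge 2n-\epsilon$ on $\{\mR\ge R_0(\epsilon)\}$, substituting in this setting for the Euclidean identity $\Delta_M|x|^2\equiv 2n$.

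For part (2), assuming $M$ is proper, $V(r):=\mathrm{Vol}(M\cap\{\mR\le r\})$ is finite and $M\cap\{\mR\le r\}$ is compact, so I would apply Stokes to $\nabla^M\mR^2$ and obtain
\[
\int_{M\cap\{\mR\le r\}}\Delta_M\mR^2\, d\mu = 2r\int_{M\cap\{\mR=r\}}|\nabla^M\mR|\, d\sigma \le 2rV'(r),
\]
using $|\nabla^M\mR|\le|\bn\mR|\le 1+o(1)$ and coarea. Combined with the asymptotic lower bound, this gives the differential inequality $(2n-\epsilon)V(r)-C\le 2rV'(r)$; a Gronwall integration, sharpened by the fact that the error in $\Delta_M\mR^2$ tends uniformly to zero at infinity, yields the Euclidean bound $V(r)\ge cr^n$. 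For part (3), my plan is a sheet-packing argument: if $M$ is not proper, there exist $x_k\in M$ escaping every compact subset of $M$ yet converging in $N$ to some $q$; for $r_0$ small the extrinsic balls $B^N_{r_0/2}(x_k)$ eventually all lie in $B^N_{2r_0}(q)$, the distinct local sheets of $M$ through the $x_k$ are pairwise disjoint, and the local monotonicity formula for minimal hypersurfaces at small extrinsic scales gives each sheet volume $\ge cr_0^n$, contradicting \eqref{MEmost}.

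Part (1) I expect to be the subtlest. Let $E$ be an end and assume for contradiction $\mathrm{Vol}(E)<\infty$. I would choose $R_0$ large enough that $\partial E\subset\{\mR<R_0\}$ and $\Delta_M\mR^2\ge 2n-\epsilon$ on $E\cap\{\mR\ge R_0\}$. The averaging bound
\[
\int_r^{2r}\int_{\{\mR=s\}\cap E}|\nabla^M\mR|\, d\sigma\, ds \le \mathrm{Vol}(E\cap\{r\le\mR\le 2r\}) \longrightarrow 0
\]
as $r\to\infty$, combined with Chebyshev, yields a sequence $r_k\to\infty$ with $r_k\int_{\{\mR=r_k\}\cap E}|\nabla^M\mR|\, d\sigma\to 0$. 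Passing to the limit in Stokes on $E\cap\{R_0\le\mR\le r_k\}$ forces $(2n-\epsilon)\,\mathrm{Vol}(E\cap\{\mR>R_0\})\le 0$, so the open set $E\cap\{\mR>R_0\}$ must be empty and $E$ is $N$-bounded. The hard part will be excluding this residual $N$-bounded case, which I plan to handle by the same sheet-packing argument as in (3): an $N$-bounded, $M$-noncompact end accumulates at some $q\in N$ and, by local monotonicity, carries infinite local volume near $q$, contradicting $\mathrm{Vol}(E)<\infty$.
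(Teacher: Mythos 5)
Your route is genuinely different from the paper's: you anchor the monotonicity at the fixed point $p$ via the Colding--Minicozzi asymptotics for the Green function $\mR$, whereas the paper anchors Hessian comparison at a \emph{moving} far-away base point $q\in\partial B_{\Omega r}(p)$, where C3) makes the geometry of $B_r(q)$ quantitatively close to Euclidean (with injectivity radius control from Cheeger--Gromov--Taylor). The difference is decisive for part (2), and this is where your proposal has a real gap. Writing $\eta(s)$ for $\sup_{M\cap\{\mR=s\}}\bigl|\Delta_M\mR^2-2n\bigr|$, your Stokes argument gives a differential inequality of the form
\[
\bigl(2n-\eta(r)\bigr)V(r)-C\ \le\ 2rV'(r),
\]
which integrates to
\[
V(r)\ \gtrsim\ V(R_0)\Bigl(\frac{r}{R_0}\Bigr)^{n}\exp\!\Bigl(-\tfrac12\int_{R_0}^{r}\frac{\eta(s)}{s}\,ds\Bigr).
\]
But \eqref{HessR2} gives only $\eta(s)\to 0$ with \emph{no rate}, so $\int^{\infty}\eta(s)/s\,ds$ may diverge (e.g.\ if $\eta(s)\sim 1/\log s$), and then $V(r)/r^{n}\to 0$. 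Your argument therefore yields $V(r)\ge c_\epsilon r^{n-\epsilon}$ for each $\epsilon>0$, but not the sharp conclusion $\liminf_{r\to\infty}V(r)/r^n>0$ of \eqref{MEleast}. The ``uniform decay'' you invoke does not repair this: what is needed is an integrable rate, and C1)--C3) together with the Colding--Minicozzi theorem do not supply one for $\mathrm{Hess}_{\mR^2}$.

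The paper closes exactly this hole by running the comparison for the distance function $\rho_q$ on \emph{intrinsic} balls $B_s^M(q)$ with $q$ at distance $\Omega r=(\sqrt{c}/\delta+1)r$ from $p$. There C3) gives $|K_N|\le\delta^2/r^2$ throughout $B_r(q)$, and Hessian comparison yields $\Delta_M\rho_q^2\ge 2n-\tfrac{2n\delta}{r}\rho_q$ on $B_r^M(q)$; the error term integrates to a quantity bounded by $n\delta$ \emph{uniformly in $s\in(0,r]$}, so the exponential correction is a single fixed factor $e^{-n\delta}$ rather than a possibly divergent one. This is precisely what quadratic curvature decay buys at a far-away base point that it does not buy for the Green function at $p$. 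One obtains the clean lower bound $\mathrm{vol}(B_s^M(q))\ge\tfrac{\omega_{n-1}}{n}s^n e^{-n\delta}$, from which (1), (2), (3) all follow by putting $B_s^M(q)$ inside extrinsic balls and packing. Your outlines of (1) and (3) are sound in spirit---in particular the $N$-bounded noncompact end is disposed of in both cases by the same intrinsic-ball packing argument the paper uses---but you should also be a bit more careful about applying Stokes to $E\cap\{R_0\le\mR\le r_k\}$ in (1), since that region need not be compact until you have already shown $E$ is $N$-bounded. I would reorganize the whole proof around the far-away base-point comparison, since that is the step that actually makes (2) go through.
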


\begin{proof}
For any $0<\de\le1$, set $\Om=\left(\f{\sqrt{c}}{\de}+1\right)$ with $c$ as in axiom C3). For any fixed point $p\in N$ and arbitrary $q\in\p B_{\Om r}(p)$, we have
$$d(p,x)\ge\f{\sqrt{c}}{\de}\ r,\qquad \mathrm{for\ any}\ \ x\in B_r(q).$$
Then by condition C3) the sectional curvature satisfies
\begin{equation}\aligned
|K_N(x)|\le\f{\de^2}{r^2}, \qquad \mathrm{for\ any}\ \ x\in B_r(q).
\endaligned
\end{equation}
Note $Vol(B_s(q))\ge V_N s^{n+1}$ for any $s>0$ as implied by conditions C1),C2) by virtue of the Bishop-Gromov theorem.

Now, we claim that by \cite{CGT}, for sufficiently small $\de$ (depending only on $n,c,V_{N}$) the injectivity radius at $q$ satisfies $i(q)\ge r$ for every $r>0$ and $q\in\p B_{\Om r}(p)$. Arguing by contradiction, let us assume instead the existence of sequences $\de_j\rightarrow0$, $r_j>0$ and a sequence of $q_j\in\p B_{\left(1+\sqrt{c}\de_j^{-1}\right)r_j}(p)$ such that the injectivity radius at $q_j$ on $N$ satisfies $i(q_j)<r_j$.
Let $\bar{r}_j=\left(1+\f{\sqrt{c}}{2\de_{j}}\right)r_{j}$, and $\mathcal{Z}_j=\f1{\bar{r}_j}B_{\bar{r}_j}(q_j)=\left(B_{\bar{r}_{j}}(q_{j}),\f1{\bar{r}_{j}}\bar{g},q_{j}\right)$. Now $\mathcal{Z}_j$ has nonnegative Ricci curvature, and $Vol(\widehat{B}^{\mathcal{Z}_j}_s(q_{j}))\ge V_N s^{n+1}$ for any $0<s<1$, where $\widehat{B}^{\mathcal{Z}_j}_s(q_{j})$ is the geodesic ball centered at $q_j$ with radius $s$ in $\mathcal{Z}_j$. The sectional curvature tensor of $\mathcal{Z}_j$ satisfies
$$|K_{\mathcal{Z}_j}(x)|\le\de_j^2, \qquad \mathrm{for\ any}\  x\in \mathcal{Z}_j.$$
Moreover, the injectivity radius at $q_{j}$ on $\mathcal{Z}_j$ satisfies $i(q_{j})<\f{r_j}{\bar{r}_j}=\left(1+\f{\sqrt{c}}{2\de_{j}}\right)^{-1}$. By Gromov's compactness theorem \cite{GLP}, there is a subsequence $j_k$ such that $\mathcal{Z}_{j_k}$ converges to a unit ball with the standard Euclidean metric in the pointed Gromov-Hausdorff sense.  So we assume that  $\mathcal{Z}_{j_k}$ is connected without loss of generality. Suppose $y_k\in\mathcal{Z}_{j_k}$ satisfying $d_{\mathcal{Z}_{j_k}}(y_k,q_{j_k})=i(q_{j_k})$, where $d_{\mathcal{Z}_{j_k}}$ is the distance function on $\mathcal{Z}_{j_k}$. Obviously, $y_k$ and $q_{j_k}$ are not conjugate by $|K_{\mathcal{Z}_j}(x)|\le\de_j^2\rightarrow0$ as $j\rightarrow\infty$. By the proof of Klingenberg's lemma on the injectivity radius (see page 158 in \cite{GHL} for instance), there are two normal minimal geodesics $\Gamma^1_k,\G^2_k\subset\mathcal{Z}_{j_k}$ both joining $q_{j_k},y_k$ such that $\dot{\Gamma}^1_k(y_k)=-\dot{\Gamma}^2_k(y_k)$, i.e., $\Gamma^1_k\cup\G^2_k$ is a geodesic loop on $q_{j_k}$. By Theorem 4.3 in \cite{CGT}, there is a uniform positive lower bound independent of $k$ for the length of the loop. Namely, $d_{\mathcal{Z}_{j_k}}(y_k,q_{j_k})=i(q_{j_k})$ has a uniform positive lower bound, which violates $i(q_{j_k})<\left(1+\f{\sqrt{c}}{2\de_{j_k}}\right)^{-1}\rightarrow0$ as $k\rightarrow\infty$. Hence we complete the proof of the above claim.


So $\r_q(x)$ is smooth for $x\in B_r(q)\setminus\{q\}$.
Assume $q\in M$. Let $\{e_i\}$ be a local orthonormal frame field  of $M$. Then
\begin{equation}\aligned
\De_M\r_q^2=&\sum_{i=1}^n\left(\na_{e_i}\na_{e_i}\r_q^2-\left({\na_{e_i}e_i}\right)\r_q^2\right)\\
=&\sum_{i=1}^n\left(\bn_{e_i}\bn_{e_i}\r_q^2-\left({\bn_{e_i}e_i}\right)\r_q^2\right)
+\sum_{i=1}^n\left({\bn_{e_i}e_i}-\na_{e_i}e_i\right)\bar{\r}_q^2\\
=&\sum_{i=1}^n\mathrm{Hess}_{\r_q^2}(e_i,e_i).
\endaligned
\end{equation}
For any $\xi\in\G(TN)$ we denote $\xi^T_q=\xi-\left\lan \xi,\f{\p}{\p\r_q}\right\ran \f{\p}{\p\r_q}$.
Combining $\mathrm{Hess}_{\r_q^2}\left(\xi^T_q,\f{\p}{\p\r_q}\right)=0$ and $\mathrm{Hess}_{\r_q^2}\left(\f{\p}{\p\r_q},\f{\p}{\p\r_q}\right)=2$, we obtain
\begin{equation}\aligned
\mathrm{Hess}_{\r_q^2}(e_i,e_i)=&\mathrm{Hess}_{\r_q^2}\left((e_i)^T_q,(e_i)^T_q\right)+2\left\lan e_i,\f{\p}{\p\r_q}\right\ran^2\\
=&2\r_q\mathrm{Hess}_{\r_q}\left((e_i)^T_q,(e_i)^T_q\right)+2\left\lan e_i,\f{\p}{\p\r_q}\right\ran^2.
\endaligned
\end{equation}
The injectivity radius $i(q)\ge r$ implies that $\r_q$ is a smooth function on $B_r(q)\setminus\{q\}$.
By the Hessian comparison theorem and the sectional curvature $K_N(x)\le\f{\de^2}{r^2}$ for any $x\in B_r(q)$, for any $\xi\bot\f{\p}{\p\r_q}$ we have
$$\mathrm{Hess}_{\r_q}(\xi,\xi)\ge \f{\de}r\cot{\left(\f{\de\r_q}r\right)}|\xi|^2.$$
Since $\f{\de\r_q}r\cot{\left(\f{\de\r_q}r\right)}\le1$ for $\r_q\le r$ with sufficiently small $\de$, then
\begin{equation}\aligned\label{DeMrq2}
\De_M\r_q^2\ge&2\r_q\sum_{i=1}^n\f{\de}r\cot{\left(\f{\de\r_q}r\right)}\left|(e_i)^T_q\right|^2+2\sum_{i=1}^n\left\lan e_i,\f{\p}{\p\r_q}\right\ran^2\\
\ge&2\f{\de\r_q}r\cot{\left(\f{\de\r_q}r\right)}\sum_{i=1}^n\left|(e_i)^T_q\right|^2+2\f{\de\r_q}r\cot{\left(\f{\de\r_q}r\right)}\sum_{i=1}^n\left\lan e_i,\f{\p}{\p\r_q}\right\ran^2\\
=&2n\f{\de\r_q}r\cot{\left(\f{\de\r_q}r\right)}.
\endaligned
\end{equation}
For any $t\in[0,1)$ we have $\cos t\ge1-t$, then
$$\left(\tan t-\f t{1-t}\right)'=\f1{\cos^2 t}-\f1{(1-t)^2}\le0.$$
So on $[0,1)$
$$\tan t\le\f t{1-t}.$$
Denote the extrinsic ball $D_s(q)=B_s(q)\cap M$. Hence on $D_r(q)$ we have
\begin{equation}\label{Lro}
\De_M\r_{q}^2(x)\ge2n\left(1-\f\de r\r_q(x)\right)=2n-\f{2n\de\r_q(x)}{r}.
\end{equation}
Let $\r^M_q$ and $B_s^M(q)$  be the distance function from $q$ and the geodesic ball with radius $s$ and centered at $q$ in $M$. Obviously, the intrinsic ball $B_s^M(q)\subset D_s(q)$ for any $s\in(0,r)$ and (\ref{Lro}) is valid on  $B_r^M(q)$.

Integrating (\ref{Lro}) by parts on $B^M_s(q)$ yields
\begin{equation}
2n\int_{B_s^M(q)}\left(1-\f{\de\r_q}{r}\right)
\le\int_{B_s^M(q)}\De_M\r_{q}^2=\int_{\p B_s^M(q)}\na\r_{q}^2\cdot\nu
\le 2s\int_{\p B_s^M(q)}|\na\r_{q}|,
\end{equation}
where $\nu$ is the normal vector to $\p B_s^M(q)$.
Then
\begin{equation}\aligned
\f{\p}{\p s}\left(s^{-n}\int_{ B_s^M(q)}1\right)
=&-n s^{-n-1}\int_{B_s^M(q)}1+s^{-n}\int_{\p  B_s^M(q)}1\\
\ge&-n s^{-n-1}\int_{B_s^M(q)}1+s^{-n}\int_{\p  B_s^M(q)}|\na\r_q|\\
\ge& -n s^{-n-1}\int_{B_s^M(q)}1+n s^{-n-1}\int_{B_s^M(q)}\left(1-\f{\de\r_q}{r}\right)\\
=&-\f{n\de}{r}s^{-n}\int_{B_s^M(q)}1.
\endaligned
\end{equation}
Integrating the above inequality implies for $0<s\le r$
\begin{equation}\aligned\label{volBsMq}
\text{vol}(B_s^M(q))\triangleq\int_{B_s^M(q)}1\ge\f{\omega_{n-1}}ns^{n}e^{-\f{n\de s}r}\ge \f{\omega_{n-1}}ns^{n}e^{-n\de}.
\endaligned
\end{equation}
Here $\omega_{n-1}$ is the measure of the standard $(n-1)$-dimensional unit sphere in Euclidean space.

(i) Let $E$ be an and of $M$. If $E$ is not contained in any bounded
domain in $N$, then we choose $r$ large enough and some $q\in\p B_{\Om r}(p)\cap M$. By \eqref{volBsMq}, $E$ then has infinite volume.

Now we suppose that $E\subset B_{R_0}(p)$ for some constant $R_0>0$.
Recalling \eqref{volBsMq}, there is a constant $r_0>0$ so that for any $0<r\le r_0$ and $z\in E$ we have a constant $C_0>0$ such that
\begin{equation}\aligned
\text{vol}(B_{r}^M(z))\ge C_0r^{n}.
\endaligned
\end{equation}
Since $E$ is noncompact, then we can choose a sequence $\{z_i\}$ such that $B_{r_0}^M(z_i)\cap B_{r_0}^M(z_j)\neq\emptyset$ for $i\neq j$. Hence
$$\text{vol}(E)\ge\sum_i\text{vol}(B_{r_0}^M(z_i))\ge C_0\sum_ir_0^n=\infty.$$


(ii) Since $B_s^M(q)\subset D_s(q)$ for any point $q\in\p B_{\Om r}(p)$ and any $s\in(0,r)$, then with \eqref{volBsMq} we obtain
\begin{equation}\aligned
\int_{D_s(q)}1\ge \f{\omega_{n-1}}ns^{n}e^{-n\de}\qquad \mathrm{for\ every}\ s\in(0,r].
\endaligned
\end{equation}
Hence we conclude that \eqref{MEleast} holds.

(iii) If $M$ is not a proper immersion into $N$, there exist an end $E\subset M$ and a constant $r_0$, such that $E\subset B_{r_0}(p)$.
The assumption that $M$ has at most Euclidean volume growth  implies $M$ has finite volume, which contradicts  the results in (i).
\end{proof}

Let $M$ be a minimal hypersurface in $N$ with Euclidean volume growth at most. Combining \eqref{Grn-1}\eqref{Rr} and the definition of $\mathcal{R}$,
the quantity
$$r^{-n}\int_{M\cap\{\mathcal{R}\le r\}}|\overline{\na}\mathcal{R}|^2d\mu$$
is uniformly bounded for any $r\in(0,\infty)$, then there exists a sequence $r_i\rightarrow\infty$ such that
\begin{equation}\label{LNR}\limsup_{r\rightarrow\infty}\left(r^{-n}\int_{M\cap\{\mathcal{R}\le r\}}|\overline{\na}\mathcal{R}|^2d\mu\right)
=\lim_{r_i\rightarrow\infty}\left(r_i^{-n}\int_{{M\cap\{\mathcal{R}\le r_i\}}}|\overline{\na}\mathcal{R}|^2d\mu\right).\end{equation}

\begin{lemma}\label{limfdR}
There is a sequence $\de_i\rightarrow0^+$ such that for any constants $K_2>K_1>0$ and $\ep\in(0,1)$ and any bounded Lipschitz function $f$ on $N\setminus B_1$ we have
\begin{equation}\aligned
&\limsup_{i\rightarrow\infty}\left|\left(\f{\de_i}{K_2r_i}\right)^{n}\int_{M\cap\{\mathcal{R}\le \f{K_2r_i}{\de_i}\}}f|\overline{\na}\mathcal{R}|^2-\left(\f{\de_i}{K_1r_i}\right)^{n}\int_{M\cap\{\mathcal{R}\le \f{K_1 r_i}{\de_i}\}}f|\overline{\na}\mathcal{R}|^2\right|\\
\le&C\ep^n\sup_{N\setminus B_1}|f|+\limsup_{i\rightarrow\infty}\int_{\f{K_1 r_i}{\de_i}}^{\f{K_2r_i}{\de_i}}\left(s^{-n-1}\int_{M\cap\{\f{\ep K_1r_i}{\de_i}<\mathcal{R}\le s\}}\mathcal{R}\na f\cdot\na\mathcal{R}\right)ds.
\endaligned
\end{equation}
\end{lemma}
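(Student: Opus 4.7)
The approach is to establish an asymptotic monotonicity identity for
$\Theta(s) := s^{-n}\int_{M\cap\{\mR\le s\}} f|\bn\mR|^2\,d\mu$
and then integrate it in $s$ over the prescribed range. Minimality of $M$ supplies the underlying divergence structure, while \eqref{HessR2} ensures that the classical Euclidean monotonicity formula holds up to errors vanishing at infinity.

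\textbf{Step 1 (Identity from minimality).} Take $X := \mR\bn\mR = \tfrac12\bn\mR^2$. Combining $\De_N\mR^2 = 2(n+1)|\bn\mR|^2$ with the minimal surface equation,
\[ \div_M X \;=\; (n+1)|\bn\mR|^2 - \tfrac12\,\mathrm{Hess}_{\mR^2}(\nu,\nu) \;=\; n|\bn\mR|^2 + \mathcal{E}(x), \]
where $\mathcal{E}(x)\to 0$ uniformly as $\mR(x)\to\infty$ by \eqref{HessR2}.

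\textbf{Step 2 (Differential identity on annuli).} Set $\tau := \ep K_1 r_i/\de_i$, $A_\tau^s := M\cap\{\tau<\mR\le s\}$, and $V(s):=\int_{A_\tau^s}f|\bn\mR|^2\,d\mu$. Applying the divergence theorem to $fX$ on $A_\tau^s$, and using the orthogonal decomposition $|\bn\mR|^2 = |\na^M\mR|^2 + |\na^\perp\mR|^2$ together with the coarea formula, I derive
\[ (s^{-n}V(s))' \;=\; s^{-n-1}\!\int_{A_\tau^s}\mR\,\na^M f\cdot\na^M\mR\,d\mu \;+\; \mathrm{I}(s) + \mathrm{II}(s) + \mathrm{III}(s), \]
where $\mathrm{I}(s) = s^{-n-1}\!\int_{A_\tau^s} f\mathcal{E}\,d\mu$ (interior remainder), $\mathrm{II}(s) = \tau s^{-n-1}\!\int_{M\cap\{\mR=\tau\}} f|\na^M\mR|\,d\mathcal{H}^{n-1}$ (inner flux), and $\mathrm{III}(s) = s^{-n}\!\int_{M\cap\{\mR=s\}} f|\na^\perp\mR|^2/|\na^M\mR|\,d\mathcal{H}^{n-1}$ (non-radial flux).

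\textbf{Step 3 (Integrate, split, bound the easy pieces).} Integrate from $s_1:=K_1 r_i/\de_i$ to $s_2:=K_2r_i/\de_i$ and use
\[ \Theta(s_2)-\Theta(s_1) \;=\; (s_2^{-n}-s_1^{-n})\!\int_{M\cap\{\mR\le\tau\}}\!f|\bn\mR|^2\,d\mu + \bigl[s_2^{-n}V(s_2)-s_1^{-n}V(s_1)\bigr]. \]
The first summand is $\le C\ep^n\sup|f|$ since \eqref{Grn-1} forces $\mR\asymp r$ at infinity and \eqref{LNR} yields $\int_{M\cap\{\mR\le\tau\}}|\bn\mR|^2\le C\tau^n$. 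The $\na^M f$ integral from Step 2 produces precisely the principal integral on the RHS of the lemma, and $\int_{s_1}^{s_2}\mathrm{I}(s)\,ds \to 0$ as $i\to\infty$ because $|\mathcal{E}|\to 0$ uniformly on $\{\mR\ge\tau\}$ while $\tau\to\infty$.

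\textbf{Step 4 (Choose $\de_i$; control II and III).} Put $\phi(s):=s^{-n}\!\int_{M\cap\{\mR\le s\}}|\bn\mR|^2\,d\mu$ and $L:=\limsup_{s\to\infty}\phi(s)$. Choose $\de_i\to 0^+$ by a diagonal argument so that $\phi(Kr_i/\de_i)\to L$ for every rational $K>0$; this rests on the tangent-cone structure at infinity from \cite{ChC} combined with the near-maximality \eqref{LNR} of the sequence $r_i$. Then applying Step 3 with $f\equiv 1$ makes the principal term vanish and $\Theta(s_2)-\Theta(s_1) = \phi(s_2)-\phi(s_1)\to 0$, which forces
\[ \int_{s_1}^{s_2}\bigl(\mathrm{II}(s)+\mathrm{III}(s)\bigr)\big|_{f=1}\,ds \;\le\; C\ep^n + o_i(1), \]
both integrands being nonnegative. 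For general bounded Lipschitz $f$, $|\mathrm{II}(s)|$ and $|\mathrm{III}(s)|$ are pointwise bounded by $\sup|f|$ times their $f\equiv 1$ analogues, so
\[ \Bigl|\int_{s_1}^{s_2}(\mathrm{II}+\mathrm{III})\,ds\Bigr| \;\le\; C\ep^n\sup_{N\setminus B_1}|f| + o_i(1). \]
Assembling Steps 3 and 4 produces the claimed limsup inequality.

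\textbf{Main obstacle.} The crux is the diagonal construction of $\de_i$ in Step 4: a \emph{single} sequence must simultaneously control $\phi(Kr_i/\de_i)$ for \emph{every} $K>0$. This hinges on the tangent-cone analysis guaranteed by conditions C1)-C3) and \cite{ChC}, together with the optimality of $r_i$ in \eqref{LNR}, so that after passing to a further subsequence $\phi$ stabilizes at the asymptotic density $L$ on logarithmically large ranges of scales containing $[K_1r_i/\de_i,K_2r_i/\de_i]$. Once $\de_i$ is secured, the rest of the argument reduces to a careful but routine bookkeeping of error terms.
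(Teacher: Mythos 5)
Your Steps 1--3 are essentially the paper's computation: the divergence identity $\De_M\mR^2 = 2(n+1)|\bn\mR|^2 - \mathrm{Hess}_{\mR^2}(\nu,\nu)$, the differentiation of the weighted density $s^{-n}\int_{\{\mR\le s\}}f|\bn\mR|^2$, and the decomposition into an inner-flux term, a Hessian-remainder term, the $\na f$ term, and the non-radial boundary flux $\int_{\{\mR=s\}}f\langle\bn\mR,\nu\rangle^2/|\na\mR|$. The bound $C\ep^n\sup|f|$ for the contributions coming from truncation at $\tau=\ep K_1 r_i/\de_i$ is also correct.

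The gap is in Step 4, where you both misidentify how $\de_i$ is chosen and invoke unneeded machinery. In the paper, $\de_i$ is determined purely by the decay rate in \eqref{HessR2} and \eqref{Rr}: for each $i$, $\de_i$ is the smallest constant such that $|\De_M\mR^2-2n|\bn\mR|^2|\le 2\de_i|\bn\mR|^2$ holds outside $B_{\sqrt{r_i}}$. No tangent-cone analysis of \cite{ChC} enters here at all (that enters only later, in Lemma \ref{coneCY}), and there is no diagonal construction over rational $K$. More importantly, your claim that one needs $\phi(Kr_i/\de_i)\to L$ for every $K$ is stronger than required, and the justification you give (\cite{ChC} plus ``the optimality of $r_i$'') does not actually produce it. What the argument really needs is only the weak inequality
\[
\limsup_{i\to\infty}\phi(\de_i^{-2}r_i)\;\le\;L\;=\;\lim_{i\to\infty}\phi(r_i),
\]
which is automatic from the definitions of $L$ and $r_i$, together with the near-monotonicity of $\phi$ coming from the $\de_i$-Hessian estimate. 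Integrating the $f\equiv1$, $\a_i=1$ version of the ODE over $[r_i,\de_i^{-2}r_i]$ then gives
\[
\phi(\de_i^{-2}r_i)-\phi(r_i)\;\ge\;-\,o(1)\;+\;\int_{r_i}^{\de_i^{-2}r_i}\!\left(s^{-n}\!\int_{M\cap\{\mR=s\}}\f{\langle\bn\mR,\nu\rangle^2}{|\na\mR|}\right)ds,
\]
and the non-negativity of the right-most integrand plus the display above forces \eqref{deiridRnu}. That single estimate is what kills the non-radial flux term III in the final inequality; your lumping of II and III under a single ``$\le C\ep^n+o_i(1)$'' obscures that the two are bounded by genuinely different mechanisms (II gives the $\ep^n$ term; III vanishes via \eqref{deiridRnu}, which must be established \emph{without} the $\ep$-truncation, i.e.\ starting from $r_i^{1/2}$). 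As written, your Step 4 would not let a reader verify either the existence of the sequence $\de_i$ or the disappearance of III.
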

\begin{proof}
Let $\{e_i\}$ be an orthonormal basis of $TM$ and $\nu$ be the unit normal vector of $M$. Then by \eqref{Deb} we have
\begin{equation}\aligned
\De_M\mR^2=&\sum_{i=1}^n\left(\na_{e_i}\na_{e_i}\mR^2-\left({\na_{e_i}e_i}\right)\mR^2\right)\\
=&\sum_{i=1}^n\left(\bn_{e_i}\bn_{e_i}\mR^2-\left({\bn_{e_i}e_i}\right)\mR^2\right)
+\sum_{i=1}^n\left({\bn_{e_i}e_i}-\na_{e_i}e_i\right)\mR^2\\
=&\De_{N}\mR^2-\mathrm{Hess}_{\mR^2}(\nu,\nu)\\
=&2(n+1)|\bn\mR|^2-\mathrm{Hess}_{\mR^2}(\nu,\nu).
\endaligned
\end{equation}
By \eqref{HessR2} and \eqref{Rr} there exists a sequence $\de_i\rightarrow0^+$ such that on $M\setminus B_{\sqrt{r_i}}$ we have
\begin{equation}\aligned\label{deidr5.16}
\left|\De_M\mathcal{R}^2-2n|\overline{\na}\mathcal{R}|^2\right|\le2\de_i|\overline{\na}\mathcal{R}|^2.
\endaligned
\end{equation}
For any $s\ge\a_ir_i^{\f12}$ with $\a_i\ge1$ and $f\in\mathrm{ Lip}(N\setminus B_1)$, integrating by parts yields
\begin{equation}\aligned
2s\int_{M\cap\{\mathcal{R}=s\}}f|\na\mathcal{R}|&-2\a_ir_i^{\f12}\int_{M\cap\{\mathcal{R}=\a_ir_i^{\f12}\}}f|\na\mathcal{R}|
=\int_{M\cap\{\a_ir_i^{\f12}<\mathcal{R}\le s\}}\mathrm{div}_M\left(f\na\mathcal{R}^2\right)\\
=&\int_{M\cap\{\a_ir_i^{\f12}<\mathcal{R}\le s\}}\na f\cdot\na\mathcal{R}^2+\int_{M\cap\{\a_ir_i^{\f12}<\mathcal{R}\le s\}}f\De_M\mathcal{R}^2.
\endaligned
\end{equation}
Hence,
\begin{equation}\aligned\label{fdRa}
&\f{\p}{\p s}\left(s^{-n}\int_{M\cap\{\mathcal{R}\le s\}}f|\overline{\na}\mathcal{R}|^2\right)\\
=&-n s^{-n-1}\int_{M\cap\{\mathcal{R}\le s\}}f|\overline{\na}\mathcal{R}|^2+s^{-n}\int_{M\cap\{\mathcal{R}=s\}}f\f{|\overline{\na}\mathcal{R}|^2}{|\na\mathcal{R}|}\\
=&-n s^{-n-1}\int_{M\cap\{\mathcal{R}\le s\}}f|\overline{\na}\mathcal{R}|^2+s^{-n}\int_{M\cap\{\mathcal{R}=s\}}f|\na\mathcal{R}|
+s^{-n}\int_{M\cap\{\mathcal{R}=s\}}f\f{\lan\overline{\na}\mathcal{R},\nu\ran^2}{|\na\mathcal{R}|}\\
=&-n s^{-n-1}\int_{M\cap\{\mathcal{R}\le s\}}f|\overline{\na}\mathcal{R}|^2+\f12s^{-n-1}\int_{M\cap\{\a_ir_i^{\f12}<\mathcal{R}\le s\}}f\De_M\mathcal{R}^2\\
&+\a_ir_i^{\f12} s^{-n-1}\int_{M\cap\{\mathcal{R}=\a_i r_i^\f{1}{2}\}}f|\na\mathcal{R}|+s^{-n-1}\int_{M\cap\{\a_ir_i^{\f12}<\mathcal{R}\le s\}}\mathcal{R}\na f\cdot\na\mathcal{R}\\
&+s^{-n}\int_{M\cap\{\mathcal{R}=s\}}f\f{\lan\overline{\na}\mathcal{R},\nu\ran^2}{|\na\mathcal{R}|}\\
=&-n s^{-n-1}\int_{M\cap\{\mathcal{R}\le \a_ir_i^{\f12}\}}f|\overline{\na}\mathcal{R}|^2+\f12s^{-n-1}\int_{M\cap\{\a_ir_i^{\f12}<\mathcal{R}\le s\}}f\left(\De_M\mathcal{R}^2-2n|\overline{\na}\mathcal{R}|^2\right)\\
&+\a_ir_i^{\f12} s^{-n-1}\int_{M\cap\{\mathcal{R}=\a_i r_i^\f{1}{2}\}}f|\na\mathcal{R}|+s^{-n-1}\int_{M\cap\{\a_ir_i^{\f12}<\mathcal{R}\le s\}}\mathcal{R}\na f\cdot\na\mathcal{R}\\
&+s^{-n}\int_{M\cap\{\mathcal{R}=s\}}f\f{\lan\overline{\na}\mathcal{R},\nu\ran^2}{|\na\mathcal{R}|}.
\endaligned
\end{equation}
Denote
$$V_M\triangleq\sup_{r>0}\left(r^{-n}\int_{M\cap\{\mathcal{R}\le r\}}|\overline{\na}\mathcal{R}|^2d\mu\right).$$
Select $f\equiv1$ and $\a_i=1$ in (\ref{fdRa}) and integrate.
Then for any $r\ge\sqrt{r_i}$ there is a constant $C$ depending only on $N$ and $V_M$ such that
\begin{equation}\aligned
&\left(\de_i^{-2}r\right)^{-n}\int_{M\cap\{\mathcal{R}\le \de_i^{-2}r\}}|\overline{\na}\mathcal{R}|^2-r^{-n}\int_{M\cap\{\mathcal{R}\le r\}}|\overline{\na}\mathcal{R}|^2\\
\ge&-nCr_i^{\f n2}\int_r^{\de_i^{-2}r}s^{-n-1}ds-C\de_i\int_r^{\de_i^{-2}r}\f1sds
+\int_r^{\de_i^{-2}r}\left(s^{-n}\int_{M\cap\{\mathcal{R}=s\}}\f{\lan\overline{\na}\mathcal{R},\nu\ran^2}{|\na\mathcal{R}|}\right)ds\\
\ge&-C\f{r_i^{\f n2}}{r^n}+2C\de_i\log \de_i+\int_r^{\de_i^{-2}r}\left(s^{-n}\int_{M\cap\{\mathcal{R}=s\}}\f{\lan\overline{\na}\mathcal{R},\nu\ran^2}{|\na\mathcal{R}|}\right)ds.
\endaligned
\end{equation}
Choose $r=r_i$ in the above inequality and let $i$ go to infinity, then we obtain
\begin{equation}\aligned
&\limsup_{r\rightarrow\infty}\left(r^{-n}\int_{M\cap\{\mathcal{R}\le r\}}|\overline{\na}\mathcal{R}|^2\right)-\lim_{i\rightarrow\infty}\left(r_i^{-n}\int_{M\cap\{\mathcal{R}\le r_i\}}|\overline{\na}\mathcal{R}|^2\right)\\
\ge&\lim_{i\rightarrow\infty}\left(\left(\de_i^{-2}r_i\right)^{-n}\int_{M\cap\{\mathcal{R}\le \de_i^{-2}r_i\}}|\overline{\na}\mathcal{R}|^2\right)-\lim_{i\rightarrow\infty}\left(r_i^{-n}\int_{M\cap\{\mathcal{R}\le r_i\}}|\overline{\na}\mathcal{R}|^2\right)\\
\ge&\lim_{i\rightarrow\infty}\int_{r_i}^{\de_i^{-2}r_i}
\left(s^{-n}\int_{M\cap\{\mathcal{R}=s\}}\f{\lan\overline{\na}\mathcal{R},\nu\ran^2}{|\na\mathcal{R}|}\right)ds.
\endaligned
\end{equation}
Combining the above inequality and (\ref{LNR}) imply
\begin{equation}\aligned\label{deirideRv111}
\lim_{i\rightarrow\infty}\int_{r_i}^{\de_i^{-2}r_i}\left(s^{-n}\int_{M\cap\{\mathcal{R}=s\}}\f{\lan\overline{\na}\mathcal{R},\nu\ran^2}{|\na\mathcal{R}|}\right)ds=0,
\endaligned
\end{equation}
namely, by the coarea formula
\begin{equation}\aligned\label{deiridRnu}
\lim_{i\rightarrow\infty}\int_{M\cap\{r_i<\mathcal{R}\le\de_i^{-2}r_i\}}\f{\lan\overline{\na}\mathcal{R},\nu\ran^2}{\mathcal{R}^n}=0.
\endaligned
\end{equation}

Set $|f|_0\triangleq\sup_Nf<\infty$ and $\a_i=\ep K_1r_i^{\f12}\de_i^{-1}$ for any small $\ep\in(0,1)$ in \eqref{fdRa}, then from \eqref{fdRa}, for any $r\ge\ep r_i\de_i^{-1}$ we obtain
\begin{equation}\aligned
&\left|(K_2r)^{-n}\int_{M\cap\{\mathcal{R}\le K_2r\}}f|\overline{\na}\mathcal{R}|^2-(K_1r)^{-n}\int_{M\cap\{\mathcal{R}\le K_1r\}}f|\overline{\na}\mathcal{R}|^2\right|\\
\le&nC|f|_0\left(\f{\ep K_1r_i}{\de_i}\right)^n\int_{K_1r}^{K_2r}s^{-n-1}ds+C\de_i|f|_0\int_{K_1r}^{K_2r}\f1sds\\
&+|f|_0\left(\f{\ep K_1r_i}{\de_i}\int_{M\cap\{\mathcal{R}=\f{\ep K_1r_i}{\de_i}\}}|\na\mathcal{R}|\right)\int_{K_1r}^{K_2r}s^{-n-1}ds\\
&+\int_{K_1r}^{K_2r}\left(s^{-n-1}\int_{M\cap\{\f{\ep K_1r_i}{\de_i}<\mathcal{R}\le s\}}\mathcal{R}\na f\cdot\na\mathcal{R}\right)ds\\
&+|f|_0\int_{K_1r}^{K_2r}\left(s^{-n}\int_{M\cap\{\mathcal{R}=s\}}\f{\lan\overline{\na}\mathcal{R},\nu\ran^2}{|\na\mathcal{R}|}\right)ds\\
\le&C|f|_0\f {\ep^nr_i^n}{\de_i^nr^n}+C\de_i|f|_0\log\f{K_2}{K_1}+\int_{K_1r}^{K_2r}\left(s^{-n-1}\int_{M\cap\{\f{\ep K_1r_i}{\de_i}<\mathcal{R}\le s\}}\mathcal{R}\na f\cdot\na\mathcal{R}\right)ds\\
&+\f{|f|_0}{nK_1^nr^n}\f{\ep K_1r_i}{\de_i}\int_{M\cap\{\mathcal{R}=\f{\ep K_1r_i}{\de_i}\}}|\na\mathcal{R}|+|f|_0\int_{K_1r}^{K_2 r}\left(s^{-n}\int_{M\cap\{\mathcal{R}=s\}}\f{\lan\overline{\na}\mathcal{R},\nu\ran^2}{|\na\mathcal{R}|}\right)ds,
\endaligned
\end{equation}
where we have used \eqref{deidr5.16} in the second inequality, and the definition of $\mathcal{R}$ in section 4. Since
$$ \f{\ep K_1r_i}{\de_i}\int_{M\cap\{\mathcal{R}=\f{\ep K_1r_i}{\de_i}\}}|\na\mathcal{R}|=\f12\int_{M\cap\{\mathcal{R}\le \f{\ep K_1r_i}{\de_i}\}}\De_M\mathcal{R}^2,$$
we get
\begin{equation}\aligned
&\left|(K_2r)^{-n}\int_{M\cap\{\mathcal{R}\le K_2r\}}f|\overline{\na}\mathcal{R}|^2-(K_1r)^{-n}\int_{M\cap\{\mathcal{R}\le K_1r\}}f|\overline{\na}\mathcal{R}|^2\right|\\
\le&C|f|_0\f {\ep^nr_i^n}{\de_i^nr^n}+C\de_i|f|_0\log\f{K_2}{K_1}+\int_{K_1r}^{K_2r}\left(s^{-n-1}\int_{M\cap\{\f{\ep K_1r_i}{\de_i}<\mathcal{R}\le s\}}\mathcal{R}\na f\cdot\na\mathcal{R}\right)ds\\
&+C_1\f{|f|_0}{2n\de_i^nr^n}\ep^nr_i^n
+|f|_0\int_{K_1r}^{K_2r}\left(s^{-n}\int_{M\cap\{\mathcal{R}=s\}}\f{\lan\overline{\na}\mathcal{R},\nu\ran^2}{|\na\mathcal{R}|}\right)ds
\endaligned
\end{equation}
for some $C_1>0$. Letting $r=\f{r_i}{\de_i}$ and $i\rightarrow\infty$ in the above inequality, then the conclusion follows by \eqref{deirideRv111}.
\end{proof}

Let $\ep_i=\de_i^2r_i^{-2}$ and suppose that $\ep_iN$ converges to $(N_\infty,d_\infty)$ without loss of generality.
Let $\ep_iM=(M,\ep_i g)$ and $D^i_r(x)=\ep_iM\cap B^i_r(x)$. Clearly, $\ep_iM$ is still a minimal hypersurface in $\ep_iN$ with $Vol\left(\ep_iM\cap B^i_r(p)\right)\le V_M\,r^n$.
\begin{lemma}\label{coneCY}
There exists a subsequence $\{\ep_{i_j}\}\subset\{\ep_i\}$ such that
$\ep_{i_j}M$ converges to a cone $CY=\R^+\times_{\r}Y$ in $N_{\infty}$, where $Y\subset \p\mathcal{B}_1(o)$ is an $(n-1)$-dimensional Hausdorff set with $\mathcal{H}^{n-1}(Y)>0$.
\end{lemma}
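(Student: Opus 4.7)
The plan is to combine the weak convergence of currents constructed at the end of Section 4 with the asymptotic scale invariance furnished by Lemma \ref{limfdR}, and then exhibit the cone structure by slicing.

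\textbf{Step 1 (compactness and diagonal extraction).} The construction preceding this lemma, together with Theorems \ref{ComT} and \ref{CloT}, produces for each fixed pair $K_2>K_1>0$ a weakly convergent subsequence of $\ep_iM$ on the annular region $\mathcal{B}_{K_2}\setminus\mathcal{B}_{K_1^{-1}}\subset N_\infty$. A diagonal extraction over $K_1\downarrow 0$ and $K_2\uparrow\infty$ yields one subsequence $\{\ep_{i_j}\}$ and a single integer-rectifiable current $T$ on $N_\infty\setminus\{o\}$ with $\ep_{i_j}M\rightharpoonup T$, using the uniform Euclidean volume bound together with the slice-mass bounds ensured by the choice of $l_i\in(r,2r)$.

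\textbf{Step 2 (cone property).} Apply Lemma \ref{limfdR} with $f\equiv 1$ to obtain
\begin{equation*}
\limsup_{i\to\infty}\Bigl|\bigl(\tfrac{\de_i}{K_2 r_i}\bigr)^{n}\int_{M\cap\{\mathcal{R}\le K_2 r_i/\de_i\}}|\bn\mathcal{R}|^{2}-\bigl(\tfrac{\de_i}{K_1 r_i}\bigr)^{n}\int_{M\cap\{\mathcal{R}\le K_1 r_i/\de_i\}}|\bn\mathcal{R}|^{2}\Bigr|\le C\ep^n.
\end{equation*}
Under the rescaling to $\ep_iN$, the sublevel set $\{\mathcal{R}\le Kr_i/\de_i\}$ becomes $\{\widetilde{\mathcal{R}}_i\le K\}$, which by \eqref{Rr} and \eqref{HessR2} approximates the geodesic ball $\mathcal{B}_K\subset N_\infty$, while $|\bn\mathcal{R}|\to 1$ by \eqref{Rr}. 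Letting $i\to\infty$ and then $\ep\downarrow 0$ yields $\|T\|(\mathcal{B}_K)=c K^n$ for some constant $c\ge 0$ independent of $K$. Applying the same lemma with $f$ a smooth nonnegative cutoff depending only on the angular variable $\th$ (so that the $\na f\cdot\na\mathcal{R}$ term is controlled by \eqref{dfdR2}) gives, by the same argument, $\|T\|(\mathcal{B}_K\cap\{\th\in A\})=c_A K^n$ for each such angular set $A$. Thus $T$ is invariant under the radial dilations $(\r,\th)\mapsto(\la\r,\th)$ of $N_\infty$, i.e.\ a cone with vertex at $o$.

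\textbf{Step 3 (base of the cone and positivity).} Slicing $T$ by the radial function $\r$ at level $1$ (via the Ambrosio-Kirchheim slicing theorem) yields an integer-rectifiable $(n-1)$-current $Y\subset\p\mathcal{B}_1(o)$, and the scaling invariance of $T$ forces $T=CY=\R^+\times_{\r}Y$ with $\|T\|(\mathcal{B}_K)=\tfrac{K^n}{n}H^{n-1}(Y)$. For positivity, Theorem \ref{Volest}(iii) ensures that $M$ is a proper immersion (since $M$ has at most Euclidean volume growth), so part (ii) applies and gives $\mathrm{vol}(M\cap B_r(p))\ge c_0 r^n$ for all large $r$. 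Passing this to the rescaled limit produces $\|T\|(\mathcal{B}_K)\ge c_0 K^n>0$, whence $H^{n-1}(Y)\ge n c_0>0$.

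\textbf{Main obstacle.} The crux is upgrading the integral identity of Lemma \ref{limfdR} to honest scale invariance of the weak limit $T$. Two difficulties intertwine: on one hand the radial function $\r$ on $N_\infty$ is only $C^{1,\a}$ away from the vertex, so $\widetilde{\mathcal{R}}_i$ must be carefully matched to $\r$; on the other hand the convergence $\ep_{i_j}M\rightharpoonup T$ holds only weakly as currents. The Colding-Minicozzi asymptotics \eqref{Rr} and \eqref{HessR2}, together with their rescaled form \eqref{HessK21}, are exactly what is needed to identify $\widetilde{\mathcal{R}}_i$ with the geodesic radial distance on $N_\infty$ and its gradient with $\partial_\r$ in the limit, making the scaling argument of Step 2 rigorous.
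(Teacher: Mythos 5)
Your proposal follows essentially the same route as the paper: diagonal extraction of a weak current limit $T$, Lemma~\ref{limfdR} applied to radially homogeneous test functions to force $K^{-n}\int_{T\cap\mathcal{B}_K}f$ to be independent of $K$, and Theorem~\ref{Volest}(ii)--(iii) for positivity. The paper carries out Step~2 by testing against a general homogeneous $f\in C^1(\partial\mathcal{B}_1)$, then differentiating the resulting identity $K_1^{-n}\int_{T\cap\mathcal{B}_{K_1}}f=K_2^{-n}\int_{T\cap\mathcal{B}_{K_2}}f$ in $K_1,K_2$ and using Fubini to get $\frac1{K}T\cap\partial\mathcal{B}_1$ independent of $K$; your variant with $f\equiv 1$ and angular cutoffs is equivalent. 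One detail you gloss over: the term $\int_M\mathcal{R}\,\nabla f\cdot\nabla\mathcal{R}$ in Lemma~\ref{limfdR} involves the \emph{tangential} gradient on $M$, so one must split it into $\overline\nabla f\cdot\overline\nabla\mathcal{R}$ (controlled by~\eqref{dfdR2} because $f$ is radially constant) plus a term quadratic in $\langle\overline\nabla\mathcal{R},\nu\rangle$, which is where~\eqref{deiridRnu} and the Cauchy--Schwarz step enter; simply citing~\eqref{dfdR2} does not by itself dispose of the normal-component contribution. That aside, the argument, including the properness--lower volume bound chain for $H^{n-1}(Y)>0$, is the same as the paper's.
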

\begin{proof}

Note \eqref{PhiMT}. By choosing a diagonal sequence, we can assume
$$\Phi_{i_j}^{-1}\left(\ep_{i_j}M\bigcap \overline{B^{i_j}}_r\setminus B^{i_j}_{\f1r}\right)\rightharpoonup T\qquad \mathrm{as}\ j\rightarrow\infty,$$
for any $r>1$, where $T$ is an integer-rectifiable current in $N_{\infty}$. For convenience, we still write $\ep_i$  instead of $\ep_{i_j}$.

Let $f$ be a homogenous function in $C^1(N_{\infty}\setminus\{o\})$,
that is,
$$f(\r\th)=f(\th)$$
for any $\r>0$ and $\th\in\p\mathcal{B}_1$. Let $\Pi_i$ be the map from $(N,\bar{g})$ to $\ep_iN=(N,\ep_i\bar{g},p)$ defined before, then both of \eqref{dfdR2} and \eqref{Rdf} hold. Now we can extend the function $f\circ\Phi_i^{-1}\circ\Pi_i$ to a uniformly bounded function $F_i$ in $B_{\f{K_2r_i}{\de_i}}=B_{\f{K_2}{\sqrt{\ep_i}}}$ with $F_i=f\circ\Phi_i^{-1}\circ\Pi_i$ on $B_{\f{K_2r_i}{\de_i}}\setminus B_{\f{\ep K_1r_i}{\de_i}}=B_{\f{K_2}{\sqrt{\ep_i}}}\setminus B_{\f{\ep K_1}{\sqrt{\ep_i}}}$. Note \eqref{Grn-1} and the definition of $\mathcal{R}$.
Hence for sufficiently large $i$ and $s\in\left(\f{\ep K_1 r_i}{\de_i},\f{K_2 r_i}{\de_i}\right)$, we have
\begin{equation}\aligned
\int_{M\cap\{\f{\ep K_1 r_i}{\de_i}<\mathcal{R}\le s\}}\mathcal{R}\na F_i\cdot\na\mathcal{R}\le&\int_{M\cap\{\f{\ep K_1 r_i}{\de_i}<\mathcal{R}\le s\}}\mathcal{R}\left(\overline{\na} F_i\cdot\overline{\na}\mathcal{R}+|\overline{\na} F_i|\cdot\left|\lan\overline{\na}\mathcal{R},\nu\ran\right|\right)\\
\le&\int_{M\cap\{\f{\ep K_1 r_i}{\de_i}<\mathcal{R}\le s\}}\left(C_2\ep+C_2\left|\lan\overline{\na}\mathcal{R},\nu\ran\right|\right)\\
\le&C_3\ep s^n+C_2\int_{M\cap\{\f{\ep K_1 r_i}{\de_i}<\mathcal{R}\le s\}}\left|\lan\overline{\na}\mathcal{R},\nu\ran\right|
\endaligned
\end{equation}
for some constants $C_2,C_3>1$, where the second inequality above has used \eqref{dfdR2} and \eqref{Rdf}. By the Cauchy inequality we get
\begin{equation}\aligned
&\limsup_{i\rightarrow\infty}\int_{\f{K_1 r_i}{\de_i}}^{\f{K_2r_i}{\de_i}}\left(\f1{s^{n+1}}\int_{M\cap\{\f{\ep K_1 r_i}{\de_i}<\mathcal{R}\le s\}}\mathcal{R}\na F_i\cdot\na\mathcal{R}\right)ds\\
\le&\limsup_{i\rightarrow\infty}\int_{\f{K_1 r_i}{\de_i}}^{\f{K_2r_i}{\de_i}}\left(\f{C_3\ep}s+\f {C_2}{s^{n+1}}\left(\int_{M\cap\{\f{\ep K_1 r_i}{\de_i}<\mathcal{R}\le s\}}\f{\lan\overline{\na}\mathcal{R},\nu\ran^2}{\mathcal{R}^n}\int_{M\cap\{\f{\ep K_1 r_i}{\de_i}<\mathcal{R}\le s\}}\mathcal{R}^n\right)^{\f12}\right)ds\\
\le&C_3\ep\log\f{K_2}{K_1}+C_4\limsup_{i\rightarrow\infty}\left(\int_{\f{K_1 r_i}{\de_i}}^{\f{K_2r_i}{\de_i}}\f1s ds\left(\int_{M\cap\{\f{\ep K_1 r_i}{\de_i}<\mathcal{R}\le \f{K_2r_i}{\de_i}\}}\f{\lan\overline{\na}\mathcal{R},\nu\ran^2}{\mathcal{R}^n}\right)^{\f12}\right)\\
\le&C_3\ep\log\f{K_2}{K_1}+C_4\log\f{K_2}{K_1}\limsup_{i\rightarrow\infty}\left(\int_{M\cap\{\f{\ep K_1 r_i}{\de_i}<\mathcal{R}\le \f{K_2r_i}{\de_i}\}}\f{\lan\overline{\na}\mathcal{R},\nu\ran^2}{\mathcal{R}^n}\right)^{\f12}.
\endaligned
\end{equation}
where $C_4$ is a constant.
Note $F_i$ is uniformly bounded for all $i$, then by Lemma \ref{limfdR} and \eqref{deiridRnu} we obtain
\begin{equation}\aligned
&\limsup_{i\rightarrow\infty}\left|\left(\f{\de_i}{K_2r_i}\right)^{n}\int_{M\cap\{\mathcal{R}\le \f{K_2r_i}{\de_i}\}}F_i|\overline{\na}\mathcal{R}|^2-\left(\f{\de_i}{K_1r_i}\right)^{n}\int_{M\cap\{\mathcal{R}\le \f{K_1 r_i}{\de_i}\}}F_i|\overline{\na}\mathcal{R}|^2\right|\\
\le&C_3\ep\log\f{K_2}{K_1}+C_4\limsup_{i\rightarrow\infty}\left(\ep^n\sup_{B_{\f{K_2r_i}{\de_i}}}|F_i|\right)
\le C_3\ep\log\f{K_2}{K_1}+C_5\ep^n
\endaligned
\end{equation}
for some constant $C_5$.
For any $\de\in(0,1)$, together with \eqref{Rr} we have
\begin{equation}\aligned
&\left|\f1{K_2^{n}}\int_{T\cap\left(\mathcal{B}_{K_2}\setminus\mathcal{B}_{\de K_1}\right)}f-\f1{K_1^{n}}\int_{T\cap\left(\mathcal{B}_{K_1}\setminus\mathcal{B}_{\de K_1}\right)}f\right|\\
=&\lim_{i\rightarrow\infty}\left|\left(\f{\de_i}{K_2r_i}\right)^{n}\int_{M\cap\{\f{\de K_1r_i}{\de_i}\le\mathcal{R}\le \f{K_2r_i}{\de_i}\}}F_i|\overline{\na}\mathcal{R}|^2-\left(\f{\de_i}{K_1r_i}\right)^{n}\int_{M\cap\{\f{\de K_1r_i}{\de_i}\le\mathcal{R}\le \f{K_1 r_i}{\de_i}\}}F_i|\overline{\na}\mathcal{R}|^2\right|\\
\le&\limsup_{i\rightarrow\infty}\left|\left(\f{\de_i}{K_2r_i}\right)^{n}\int_{M\cap\{\mathcal{R}\le \f{K_2r_i}{\de_i}\}}F_i|\overline{\na}\mathcal{R}|^2-\left(\f{\de_i}{K_1r_i}\right)^{n}\int_{M\cap\{\mathcal{R}\le \f{K_1 r_i}{\de_i}\}}F_i|\overline{\na}\mathcal{R}|^2\right|\\
&+\limsup_{i\rightarrow\infty}\left|\left(\f{\de_i}{K_2r_i}\right)^{n}\int_{M\cap\{\mathcal{R}\le \f{\de K_1r_i}{\de_i}\}}F_i|\overline{\na}\mathcal{R}|^2-\left(\f{\de_i}{K_1r_i}\right)^{n}\int_{M\cap\{\mathcal{R}\le \f{\de K_1 r_i}{\de_i}\}}F_i|\overline{\na}\mathcal{R}|^2\right|\\
\le& C_3\ep\log\f{K_2}{K_1}+C_5\ep^n+C_5\left(\f1{K_1^n}-\f1{K_2^n}\right)\limsup_{i\rightarrow\infty}\left(\f{\de_i^n}{r_i^n}\int_{M\cap\{\mathcal{R}\le \f{\de K_1 r_i}{\de_i}\}}1d\mu\right).
\endaligned
\end{equation}
Letting $\de\rightarrow0$ and $\ep\rightarrow0$ implies
\begin{equation}\aligned
\f1{K_2^{n}}\int_{T\cap\mathcal{B}_{K_2}}f=\f1{K_1^{n}}\int_{T\cap\mathcal{B}_{K_1}}f.
\endaligned
\end{equation}
By the argument in the proof of Theorem 19.3 in \cite{S}, the above
equality means that $T$ is a cone in $N_{\infty}$ up to a set of
measure zero, as $f$ is an arbitrary homogeneous function. In fact, by the coarea formula the above equality becomes
\begin{equation}\aligned
K_1^n\int_0^{K_2}\left(\int_{T\cap\p\mathcal{B}_{s}}f\right)ds=K_2^{n}\int_0^{K_1}\left(\int_{T\cap\p\mathcal{B}_{s}}f\right)ds.
\endaligned
\end{equation}
Differentiating w.r.t. $K_2$ and $K_1$  implies
\begin{equation}\aligned
\f1{K_2^{n-1}}\int_{T\cap\p\mathcal{B}_{K_2}}f=\f{1}{K_1^{n-1}}\int_{T\cap\p\mathcal{B}_{K_1}}f.
\endaligned
\end{equation}
Since $N_\infty=CX$ is a cone and any point in it can be represented by $(\r,\th)$ for some $\th\in X$, then we define
$\f1{r}T$ by $\{(\f\r r,\th)\in N_\infty|\ (\r,\th)\in T\}.$ So
\begin{equation}\aligned
\int_{\f1{K_2}T\cap\p\mathcal{B}_{1}}f=\int_{\f1{K_1}T\cap\p\mathcal{B}_{1}}f.
\endaligned
\end{equation}
Hence $\f1{K_2}T=\f1{K_1}T$ up to a set of measure zero, namely, $T$ is a cone, say, $CY$, where $Y\in \p\mathcal{B}_1(o)$ is an $(n-1)$-dimensional Hausdorff set. By \eqref{MEleast}, we know $\mathcal{H}^n(CY)>0$, which implies $\mathcal{H}^{n-1}(Y)>0$.
\end{proof}

\begin{remark}
By a simple modification, Lemma \ref{limfdR} and Lemma \ref{coneCY} also apply to minimal submanifolds of higher codimension with Euclidean volume growth in $N$.
\end{remark}
Without loss of generality, we assume that $\ep_iM$ converges to the cone $CY$ in the current sense. Let $\mathcal{X}\left(\mathcal{B}_{\f 2\ep}\setminus \mathcal{B}_{\ep}\right)$ be the set containing all smooth differential vector fields with compact support in $\mathcal{B}_{\f 2\ep}\setminus \mathcal{B}_{\ep}$ as in section 4.
For any $\xi\in\mathcal{X}\left(\mathcal{B}_{\f 2\ep}\setminus \mathcal{B}_{\ep}\right)$ let
\begin{equation}\aligned
\ep_iM(\omega)=\int_{\ep_iM}\lan\xi\circ\Phi_i^{-1},\nu_i\ran_i d\mu_i,\qquad CY(\xi\circ\Phi_i)=\int_{T}\lan\xi,\nu_\infty\ran d\mu_\infty,
\endaligned
\end{equation}
where $d\mu_i$ and $d\mu_\infty$ are the volume elements of $\ep_iM$ and $CY$, and $\nu_i$ and $\nu_\infty$ are the unit normal vectors of $\ep_iM$ and $CY$.

For any sufficiently small fixed constant $\ep\in(0,1)$, $\ep_iM\bigcap\left(B^i_{\f 2\ep}\setminus B^i_{\ep}\right)$ converges to $CY\bigcap\left(\mathcal{B}_{\f 2\ep}\setminus \mathcal{B}_{\ep}\right)$ in the varifold sense. Then
\begin{equation}\aligned\label{epiMvari}
\lim_{i\rightarrow\infty}\ep_iM\llcorner\left(B^i_{\f 2\ep}\setminus B^i_{\ep}\right)(\omega\circ\Phi_i^{-1})=CY\llcorner\left(\mathcal{B}_{\f 2\ep}\setminus \mathcal{B}_{\ep}\right)(\omega)
\endaligned
\end{equation}
for any $\omega\in\mathcal{X}\left(\mathcal{B}_{\f 2\ep}\setminus \mathcal{B}_{\ep}\right)$.

Let
\begin{equation}\label{Ei}
E_i\triangleq\left\{x\in \ep_iM\bigcap\left(B^i_{\f 2\ep}\setminus B^i_{\ep}\right)\bigg|\ \left|\left\lan \bn^{i}\r_i(x),\nu_i\right\ran_i\right|\ge\ep, \ \r_i\ \mathrm{is}\ C^1\ \mathrm{at}\ x\right\}.
\end{equation}
Note that $|\r_i(x)-\r_i(y)|\le d_i(x,y)$ for any $x,y\in\ep_iN$, where $d_i$ is the distance function on $\ep_iN$. So $\r_i$ is $C^1$-function almost everywhere (outside a set of $n$-dimensional Hausdorff measure zero). This set of measure zero does not affect any of the integrals in this paper, so we can assume that $\r_i$ is $C^1$ in these integrals.
If $\r_\infty(x)=d_\infty(o,x)$ is the distance function on $N_\infty$, then $\lim_{i\rightarrow\infty}\r\circ\Phi_i=\r_\infty$ in $B^i_{\f 2\ep}\setminus B^i_{\ep}$. For any compact set $K\subset \mathcal{B}_{\f 2\ep}\setminus \mathcal{B}_{\ep}$ by \eqref{epiMvari} we have
\begin{equation}\aligned
0=\lim_{i\rightarrow\infty}\left(\ep_iM\llcorner\Phi_i(K)\right)\left(\f{\p}{\p\r_\infty}\circ\Phi_i^{-1}\right)
=\lim_{i\rightarrow\infty}\int_{\ep_iM\cap\Phi_i(K)}\left\lan\f{\p}{\p\r_\infty}\circ\Phi_i^{-1},\nu_i\right\ran_i d\mu_i,
\endaligned
\end{equation}
and
\begin{equation}\aligned
0=\lim_{i\rightarrow\infty}\int_{\ep_iM\cap\Phi_i(K)}\left|\f{\p}{\p\r_\infty}\circ\Phi_i^{-1}-\bn^{i}\r_i\right|_i d\mu_i.
\endaligned
\end{equation}
By
\begin{equation*}\aligned
\left|\int_{\ep_iM\cap\Phi_i(K)}\left\lan\bn^{i}\r_i,\nu_i\right\ran_i d\mu_i\right|\le&\left|\int_{\ep_iM\cap\Phi_i(K)}\left\lan\f{\p}{\p\r_\infty}\circ\Phi_i^{-1},\nu_i\right\ran_i d\mu_i\right|\\
&+\int_{\ep_iM\cap\Phi_i(K)}\left|\f{\p}{\p\r_\infty}\circ\Phi_i^{-1}-\bn^{i}\r_i\right|_i d\mu_i,
\endaligned
\end{equation*}
we obtain
\begin{equation}\aligned\label{000Drinui}
0=\lim_{i\rightarrow\infty}\int_{\ep_iM\cap\Phi_i(K)}\left\lan\bn^{i}\r_i,\nu_i\right\ran_i d\mu_i.
\endaligned
\end{equation}

We claim that for sufficiently small $\ep>0$ there exists $i_0=i_0(\ep)$ such that $i>i_0$ implies
\begin{equation}\label{MEi}\mathcal{H}^n(E_i)<\ep^{n+1}.\end{equation}
If not, one could find a constant $\ep_0>0$ and a sequence $\mathbb{N}\ni s_i\rightarrow\infty$ such that $\mathcal{H}^n(E_{s_i})\ge\ep_0^{n+1}$. Then without loss of generality, there is a subsequence $s_{i_j}\rightarrow\infty$ of $s_i$ such that $\widetilde{E}_{s_{i_j}}\subset E_{s_{i_j}}$ with $\mathcal{H}^n(\widetilde{E}_{s_{i_j}})\ge\f12\ep_0^{n+1}$ and $\left\lan \bn^{i}\r_i(x),\nu_i\right\ran_i\ge\ep_0$ on $\widetilde{E}_{s_{i_j}}$. So we get $\mathcal{H}^n\left(\Phi^{-1}_{s_{i_j}}\left(\widetilde{E}_{s_{i_j}}\right)\right)\ge\ep_0^{n+2}$ if $\ep_0$ is sufficiently small and $j$ is sufficiently large. Note that $\ep_{s_i}M\rightharpoonup CY$. Then there are a set $K_0\subset CY\cap\left(\mathcal{B}_{\f 2\ep}\setminus \mathcal{B}_{\ep}\right)$ and a subsequence $s_{i_{j_k}}\rightarrow\infty$ of $s_{i_j}$ such that $K_0\subset \Phi^{-1}_{s_{i_{j_k}}}\left(\widetilde{E}_{s_{i_{j_k}}}\right)$ and $\mathcal{H}^n(K_0)>0$. Denote the sequence $s_{i_{j_k}}$ by $s_k$ for convenience. By \eqref{000Drinui}, we obtain
\begin{equation}\aligned
0=&\lim_{k\rightarrow\infty}\int_{\ep_{s_k}M\cap\Phi_{s_k}(K_0)}\left\lan\bn^{s_k}\r_{s_k},\nu_{s_k}\right\ran_{s_k} d\mu_{s_k}\\
\ge&\lim_{k\rightarrow\infty}\int_{\ep_{s_k}M\cap\Phi_{s_k}(K_0)}\ep_0 d\mu_{s_k}=\int_{CY\cap K_0}\ep_0 d\mu_\infty=\ep_0\mathcal{H}^n(K_0)>0.
\endaligned
\end{equation}
This is a contradiction, and we get the inequality \eqref{MEi}.

Now we assume that $M$ is a stable minimal hypersurface in $N$. Then $\ep_iM$ is still a stable minimal hypersurface in $\ep_iN$. Let $A^i$ be the second fundamental form of $\ep_iM$ in $\ep_iN$,
and $Ric_{\ep_iN}$ the Ricci curvature of $\ep_iN$.
For any Lipschitz function $\phi$ with compact support in $\ep_iM$ we have from (\ref{SV})
\begin{equation}\aligned\label{stableM}
\int_{\ep_iM}\left(|A^i|^2+Ric_{\ep_iN}(\nu_i,\nu_i)\right)\phi^2\le\int_{\ep_iM}|\na^i\phi|^2,
\endaligned
\end{equation}
where $\na^i$ is the Levi-Civita connection of $\ep_iM$. 
Now we suppose that there exists some sufficiently large $r_0>0$ such that the non-radial Ricci curvature of $N$ satisfies
\begin{equation}\aligned\label{RicxiT}
\inf_{\p B_r}Ric\left(\xi^T,\xi^T\right)\ge\f{\k'}{r^2}|\xi^T|^2
\endaligned
\end{equation}
almost everywhere for all $r\ge r_0$ and $n\ge2$, where $\k'$ is a positive constant, and $\xi^T$ stands for the part that is tangential to the geodesic sphere
$\p B_r$ (at least away from the cut locus of the center), of a tangent vector $\xi$ of $N$ at the considered point. Then
$$\inf_{\p B_s^i}Ric_{\ep_iN}\left(\e^T,\e^T\right)\ge\f{\k'}{r^2}|\e^T|_i^2>0$$
for all $s\ge \sqrt{\ep_i}r_0$ and $n\ge2$, where $\e$ is a local vector field on $\ep_iN$, $\e^T=\e-\left\lan \e,\bn^i\r_i\right\ran_i\bn^i\r_i$ if $\bn^i\r_i$ is well-defined.
Using conditions C1) and C3) which are both scaling invariant, we obtain
\begin{equation}\aligned\label{Ric}
Ric_{\ep_iN}(\nu_i,\nu_i)\ge& Ric_{\ep_iN}(\nu_i^T,\nu_i^T)+2\left\lan \nu_i,\bn^i\r_i\right\ran_i Ric_{\ep_iN}(\nu_i^T,\bn^i\r_i)\\
\ge& Ric_{\ep_iN}(\nu_i^T,\nu_i^T)-c'\left\lan \nu_i,\bn^i\r_i\right\ran_i\r_i^{-2}
\endaligned\end{equation}
for some absolute constant $c'>0$.
Let $\phi$ be the Lipschitz function on $\ep_iN$ defined by
$$\phi(x)=\left(\r_i(x)\right)^{\f{2-n}2}\sin\left(\pi\f{\log\r_i(x)}{\log\ep}\right)$$
in $B_1^i\setminus B_{\ep}^i$ and $\phi=0$ in other places.
Here $\ep$ is a small positive constant less than $\min\{\f12,\f{\k'}{2c'}\}$, which implies $\k'(1-\ep^2)-c'\ep\ge\f{\k'}4$.
So from (\ref{Ei}), (\ref{MEi}) and (\ref{Ric})
\begin{equation}\aligned
&\int_{\ep_iM}Ric_{\ep_iN}(\nu_i,\nu_i)\phi^2d\mu_i\\
\ge&\int_{(\ep_iM\setminus E_i)\cap(B_{1}^i\setminus B_{\ep}^i)}\left(\f{\k'}{\r_i^2}\left|\nu_i^T\right|_i^2-\f{c'}{\r_i^2}\left\lan \nu_i,\bn^i\r_i\right\ran_i\right)\sin^2\left(\pi\f{\log\r_i}{\log\ep}\right)\r_i^{2-n}d\mu_i\\
\ge&\left(\k'(1-\ep^2)-c'\ep\right)\int_{(\ep_iM\setminus E_i)\cap(B_{1}^i\setminus B_{\ep}^i)}\sin^2\left(\pi\f{\log\r_i}{\log\ep}\right)\r_i^{-n}d\mu_i\\
\ge&\left(\k'(1-\ep^2)-c'\ep\right)\left(\int_{\ep_iM\cap(B_{1}^i\setminus B_{\ep}^i)}\sin^2\left(\pi\f{\log\r_i}{\log\ep}\right)\r_i^{-n}d\mu_i-\ep^{-n}\mathcal{H}^n(E_i)\right)\\
\ge&\left(\k'(1-\ep^2)-c'\ep\right)\int_{\ep_iM\cap(B_{1}^i\setminus B_{\ep}^i)}\sin^2\left(\pi\f{\log\r_i}{\log\ep}\right)\r_i^{-n}d\mu_i-\k'\ep(1-\ep^2)
\endaligned
\end{equation}
for sufficiently large $i$. Substituting this into  \eqref{stableM} yields
\begin{equation}\aligned
&\left(\k'(1-\ep^2)-c'\ep\right)\int_{\ep_iM\cap(B^i_{1}\setminus B^i_{\ep})}\sin^2\left(\pi\f{\log\r_i}{\log\ep}\right)\r_i^{-n}d\mu_i-\k'\ep(1-\ep^2)\\
\le&\int_{\ep_iM}Ric_{\ep_iN}(\nu_i,\nu_i)\phi^2\le\int_{\ep_iM}|\overline{\na}^i\phi|_i^2\\
\le&\int_{\ep_iM\cap(B_1^i\setminus B^i_{\ep})}\left(\f{2-n}2\sin\left(\pi\f{\log\r_i}{\log\ep}\right)+\f{\pi}{\log\ep}\cos\left(\pi\f{\log\r_i}{\log\ep}\right)\right)^2\r_i^{-n}d\mu_i.
\endaligned
\end{equation}
Due to Lemma \ref{coneCY}, we let $i\rightarrow\infty$, and get
\begin{equation}\aligned
&\left(\k'(1-\ep^2)-c'\ep\right)\int_{CY\cap(\mathcal{B}_{1}\setminus \mathcal{B}_{\ep})}\sin^2\left(\pi\f{\log\r_\infty}{\log\ep}\right)\r_\infty^{-n}d\mu_\infty-\k'\ep(1-\ep^2)\\
\le&\int_{CY\cap(\mathcal{B}_1\setminus \mathcal{B}_{\ep})}\left(\f{2-n}2\sin\left(\pi\f{\log\r_\infty}{\log\ep}\right)+\f{\pi}{\log\ep}\cos\left(\pi\f{\log\r_\infty}{\log\ep}\right)\right)^2
\r_\infty^{-n}d\mu_\infty.
\endaligned
\end{equation}
Since
\begin{equation}\aligned
\int_{CY\cap(\mathcal{B}_{1}\setminus \mathcal{B}_{\ep})}\sin^2\left(\pi\f{\log\r_\infty}{\log\ep}\right)\r_\infty^{-n}d\mu_\infty=&\mathcal{H}^{n-1}(Y)\int_\ep^1\sin^2\left(\pi\f{\log s}{\log\ep}\right)\f1sds\\
=&\left(\log\f1\ep\right) \mathcal{H}^{n-1}(Y)\int_0^1\sin^2(\pi t)dt,
\endaligned
\end{equation}
and $\mathcal{H}^{n-1}(Y)>0$, then
\begin{equation}\aligned
&\left(\k'(1-\ep^2)-c'\ep\right)\left(\log\f1\ep\right) \mathcal{H}^{n-1}(Y)\int_0^1\sin^2(\pi t)dt-\k'\ep(1-\ep^2)\\
\le&\mathcal{H}^{n-1}(Y)\int_{\ep}^1\left(\f{2-n}2\sin\left(\pi\f{\log s}{\log\ep}\right)+\f{\pi}{\log\ep}\cos\left(\pi\f{\log s}{\log\ep}\right)\right)^2
\f1sds\\
=&\left(\log\f1\ep\right)\mathcal{H}^{n-1}(Y)\int_0^1\left(\f{2-n}2\sin(\pi t)+\f{\pi}{\log\ep}\cos(\pi t)\right)^2dt\\
=&\left(\log\f1\ep\right)\mathcal{H}^{n-1}(Y)\left(\f{(n-2)^2}4+\f{\pi^2}{(\log\ep)^2}\right)\int_0^1\sin^2(\pi t)dt,
\endaligned
\end{equation}
which implies
$$\k'\le\f{(n-2)^2}4.$$
Finally, we obtain the following results.
\begin{theorem}\label{N-M-E}
Let $N$ be an $(n+1)$-dimensional complete Riemannian manifold
satisfying conditions C1), C2) and C3), and with non-radial Ricci
curvature $\inf_{\p B_r}Ric\left(\xi^T,\xi^T\right)\ge\k'
r^{-2}$ almost everywhere for a constant $\k'$ and sufficiently large $r>0$, where $\xi^T$ stands for the part that is tangential to the geodesic sphere
$\p B_r$ (at least away from the cut locus of the center), of a tangent vector $\xi$ of $N$ at the considered point. If
$\k'>\f{(n-2)^2}4$, then $N$ admits no complete stable minimal
hypersurface with  at most Euclidean volume growth.
\end{theorem}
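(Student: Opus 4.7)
The plan is to argue by contradiction, building directly on the structural results of Lemma \ref{coneCY} and the spectral analysis of the operator $L$ on the cone that already appeared in Theorem \ref{PMC}. Suppose $M$ were a complete stable minimal hypersurface in $N$ with at most Euclidean volume growth. Then $M$ is properly immersed by Theorem \ref{Volest}(iii), and since minimality and stability are both scale-invariant, $\ep_i M \subset \ep_i N$ remains a stable minimal hypersurface for every $\ep_i > 0$, so the stability inequality \eqref{stableM} is available on every rescaling. Choosing $\ep_i = \de_i^2 r_i^{-2}$ as in Lemma \ref{limfdR}, I would extract a subsequence so that $\ep_i M$ converges (as integer-rectifiable currents) to a cone $CY = \R^+ \times_\rho Y$ in $N_\infty$ with $H^{n-1}(Y) > 0$, using Lemma \ref{coneCY}. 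The essential by-product of that construction is the asymptotic tangentiality of the unit normal, quantified by \eqref{deiridRnu} and \eqref{MEi}: the ``bad set'' $E_i$ where $|\langle \bn^i \r_i, \nu_i\rangle| \ge \ep$ satisfies $H^n(E_i) < \ep^{n+1}$ for large $i$.

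Next I would feed into \eqref{stableM} the test function
\begin{equation*}
\phi(x) = \r_i(x)^{(2-n)/2} \sin\!\left(\pi \tfrac{\log \r_i(x)}{\log \ep}\right)
\end{equation*}
supported in the annulus $B_1^i \setminus B_\ep^i$ (with $\ep \in (0, \tfrac12)$ small). This is the first eigenfunction of $L = \r^2 \p_\r^2 + (n-1)\r \p_\r$ on $[\ep,1]$, whose eigenvalue is $\tfrac{(n-2)^2}{4} + \tfrac{\pi^2}{(\log \ep)^2}$, cf.\ \eqref{eigenL2}. The radial structure of $\phi$ is exactly what will factor nicely through the cone in the limit.

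The key estimate is the lower bound on $\int_{\ep_i M} \mathrm{Ric}_{\ep_i N}(\nu_i, \nu_i) \phi^2$. Outside $E_i$, decomposing $\nu_i = \nu_i^T + \langle \nu_i, \bn^i \r_i\rangle \bn^i \r_i$ and applying \eqref{RicxiT} together with the upper bound $|\mathrm{Ric}| \le c/\r^2$ coming from C3) yields a pointwise inequality of the form
\begin{equation*}
\mathrm{Ric}_{\ep_i N}(\nu_i,\nu_i) \ge \tfrac{\k'}{\r_i^2}|\nu_i^T|^2 - \tfrac{c'}{\r_i^2}|\langle \nu_i, \bn^i \r_i\rangle|,
\end{equation*}
so on $\ep_i M \setminus E_i$ this is at least $(\k'(1-\ep^2) - c'\ep)\r_i^{-2}$. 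The contribution of $E_i$ is bounded using \eqref{MEi}, costing an additive error $O(\ep)$. The right-hand side $\int |\na^i \phi|^2$ is computed directly from the definition of $\phi$.

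Finally I would pass to the limit $i \to \infty$. Varifold convergence of $\ep_i M \llcorner (B_{2/\ep}^i \setminus B_\ep^i)$ to $CY \llcorner (\mc{B}_{2/\ep} \setminus \mc{B}_\ep)$, together with the product structure
\begin{equation*}
\int_{CY \cap (\mc{B}_1 \setminus \mc{B}_\ep)} F(\r_\infty)\, d\mu_\infty = H^{n-1}(Y) \int_\ep^1 F(s)\, s^{n-1}\, ds,
\end{equation*}
reduces both sides of the stability inequality to one-dimensional integrals; the common factor $H^{n-1}(Y) \log(1/\ep) \int_0^1 \sin^2(\pi t)\, dt$ then cancels. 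Rearranging,
\begin{equation*}
\k'(1-\ep^2) - c'\ep \le \tfrac{(n-2)^2}{4} + \tfrac{\pi^2}{(\log \ep)^2} + o(1),
\end{equation*}
so letting $\ep \to 0$ yields $\k' \le (n-2)^2/4$, the desired contradiction.

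The main obstacle I anticipate is not the spectral computation, which is essentially the one from Theorem \ref{PMC}, but making the transfer of the stability inequality to the cone limit rigorous. Varifold convergence alone does not control the normal-vector term $\langle \bn^i \r_i, \nu_i\rangle$ appearing in the Ricci lower bound, and a naive pointwise estimate is false; one must instead use the integral decay \eqref{deiridRnu} derived from the Hessian asymptotics \eqref{HessR2} of Colding--Minicozzi together with the careful measure-theoretic control of $E_i$ in \eqref{MEi}. This is precisely where the work of Lemmas \ref{limfdR} and \ref{coneCY} is needed, rather than soft compactness alone.
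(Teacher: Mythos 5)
Your proposal is correct and follows essentially the same route as the paper's own argument: rescale, pass to the cone limit via Lemma \ref{coneCY}, plug the radial test function $\r_i^{(2-n)/2}\sin(\pi\log\r_i/\log\ep)$ into the stability inequality, control the normal component of $\bn^i\r_i$ via the bad set $E_i$ and \eqref{MEi}, and reduce to the one-dimensional eigenvalue computation that gives $\k'\le (n-2)^2/4$. You also correctly identify the crux — that soft varifold compactness does not by itself control $\langle\bn^i\r_i,\nu_i\rangle$ and that the integral decay \eqref{deiridRnu} from the Colding--Minicozzi Hessian asymptotics is what makes the transfer of the stability inequality to the cone rigorous — which is exactly the role of Lemmas \ref{limfdR} and \ref{coneCY} in the paper.
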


It is well known that area-minimizing hypersurfaces have Euclidean volume growth automatically.
Let $M$ be an $n$-dimensional area-minimizing hypersurface in $N$. Then the
$s$-dimensional Hausdorff measure of the singular set of $S$ is $H^{s}(\mathrm{Sing}\,M)=0$ for all $s>n-7$ (see \cite{S} for example).
We readily check that  Lemmas \ref{limfdR} and  \ref{coneCY} also hold for $M$. Namely,
there is a sequence $\{\ep_i\}$ converging to zero such that
$\ep_iN=(N,\ep_i \bar{g},p)$ converges to a metric cone
$(N_{\infty},d_{\infty})$, and $\ep_{i}M$ converges to the  cone $C\mathcal{Y}=\R^+\times_{\r}\mathcal{Y}$ in $N_{\infty}$, where $\mathcal{Y}\in \p\mathcal{B}_1(o)$ is an $(n-1)$-dimensional Hausdorff set.
\begin{corollary}\label{N-E-M}
Let $N$ be an $(n+1)$-dimensional  complete Riemannian manifold
satisfying conditions C1), C2) and C3), and with non-radial Ricci curvature $\inf_{\p B_r}Ric\left(\xi^T,\xi^T\right)\ge\k' r^{-2}$ for a constant $\k'$ and sufficiently large $r>0$, where $\xi$ is a local vector field on $N$ with $|\xi^T|=1$ defined in \eqref{RicxiT}. If $\k'>\f{(n-2)^2}4$, then $N$ admits no complete area-minimizing hypersurface.
\end{corollary}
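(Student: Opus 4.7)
The plan is to reduce to Theorem \ref{N-M-E} by invoking the regularity theory for area-minimizing hypersurfaces. Assume for contradiction that $N$ admits a complete area-minimizing hypersurface $M$. Two standard facts enter: first, as just noted in the excerpt preceding the corollary, $M$ automatically has at most Euclidean volume growth; second, the singular set $\mathrm{Sing}\,M$ satisfies $H^s(\mathrm{Sing}\,M)=0$ for every $s>n-7$, so in particular $H^{n-2}(\mathrm{Sing}\,M)=0$, and $M\setminus \mathrm{Sing}\,M$ is a smooth stable minimal hypersurface. The excerpt further observes that Lemmas \ref{limfdR} and \ref{coneCY} apply to $M$: along a subsequence $\ep_i N$ converges in the pointed Gromov--Hausdorff sense to a metric cone $N_\infty$, and $\ep_i M$ converges as integer-rectifiable currents to a cone $C\mathcal{Y}\subset N_\infty$ with $H^{n-1}(\mathcal{Y})>0$.

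The core task is then to rerun the stability argument of Theorem \ref{N-M-E} on the smooth part of $M$. Since $H^{n-2}(\mathrm{Sing}\,M)=0$, a standard capacity construction produces a family of Lipschitz cutoffs $\eta_\tau\colon M\to[0,1]$, vanishing on a neighbourhood of $\mathrm{Sing}\,M$, equal to $1$ outside a slightly larger neighbourhood, and satisfying $\int_{\ep_i M\cap B_1^i} |\bn^i \eta_\tau|^2\,d\mu_i \to 0$ as $\tau\to 0$ for each fixed $i$. I would then apply the stability inequality \eqref{stableM} with test function $\eta_\tau \phi$, where
\begin{equation*}
\phi(x)=\rho_i(x)^{(2-n)/2}\sin\!\left(\pi\,\frac{\log \rho_i(x)}{\log \ep}\right)
\end{equation*}
is supported in $B_1^i\setminus B_\ep^i$ exactly as in the proof of Theorem \ref{N-M-E}. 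Letting $\tau\to 0$ absorbs the cutoff error (the terms $|\phi\,\bn^i\eta_\tau|^2$ together with the cross term) and recovers on the smooth stable locus of $\ep_i M$ the same stability inequality driving the proof of Theorem \ref{N-M-E}.

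From this point the argument proceeds verbatim: the Ricci hypothesis \eqref{RicxiT}, the decomposition \eqref{Ric}, and the smallness $H^n(E_i)<\ep^{n+1}$ on the set $E_i$ defined in \eqref{Ei} allow one to pass $i\to\infty$ using the convergence $\ep_i M \rightharpoonup C\mathcal{Y}$, and then send $\ep\to 0$ to obtain $\k'\le (n-2)^2/4$, contradicting the hypothesis $\k'>(n-2)^2/4$. The principal obstacle I foresee lies in the cutoff step: one must ensure that the Dirichlet contribution of $\eta_\tau$ is negligible uniformly in $i$ on each fixed annulus $B_1^i\setminus B_\ep^i$. This rests on the $H^{n-2}$-negligibility of $\mathrm{Sing}\,M$ and the boundedness of $\phi$ on the annulus, and is a routine application of the standard removable-singularities technique for stationary integral varifolds.
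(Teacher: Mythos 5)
Your proposal follows the same route as the paper: reduce to Theorem \ref{N-M-E} by observing that area-minimizing hypersurfaces have at most Euclidean volume growth and that $H^{n-2}(\mathrm{Sing}\,M)=0$, then rerun the stability argument through the rescaling limit. The paper disposes of the singular set with a terse ``we readily check,'' whereas you spell out the capacity cutoff $\eta_\tau$ explicitly; that elaboration is consistent with, and a welcome filling-in of, the paper's argument.
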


\begin{remark}
$\k=\f{2}{n}\sqrt{n-1}$ in Remark \ref{MCSk} is equivalent to
$$Ric_{MCS_\k}\left(\xi^T,\xi^T\right)=\f{(n-2)^2}{4(\r+\f1\k-\r_0)^2}\qquad for\ all\ \r\ge\r_0,$$
where $\xi^T=\xi-\left\lan\xi,\f{\p}{\p\r}\right\ran\f{\p}{\p\r}$, $\left|\xi^T\right|=1$ and $\r_0\in(1,\f1\k)$ is a constant. Hence the constant $\k'$ in Theorem \ref{N-M-E} and Corollary \ref{N-E-M} is optimal.
\end{remark}

\bibliographystyle{amsplain}

\end{document}